\theoremstyle{remark}
\newtheorem{remark}{Remark}[section]
\theoremstyle{plain}
\newtheorem{theorem}[remark] {Theorem}
\newtheorem{lemma}[remark]{Lemma}
\theoremstyle{definition}
\newtheorem{definition}[remark]{Definition}
\newcommand{\R}{\mathbb R}
\newcommand{\ind}[1]{{\mathbbm{1}}_{#1}}
\newcommand{\bs}[1]{{\boldsymbol{#1}}}
\begin{document}

\title{The Rescaled P\'olya Urn: local reinforcement and 
    chi-squared goodness of fit test} 
\author{ Giacomo Aletti
\footnote{ADAMSS Center,
  Universit\`a degli Studi di Milano, Milan, Italy, giacomo.aletti@unimi.it}
$\quad$and$\quad$ 
Irene Crimaldi
\footnote{IMT School for Advanced Studies, Lucca, Italy, 
irene.crimaldi@imtlucca.it}
} 
\maketitle

\abstract{Motivated by recent studies of big samples, this work aims at
constructing a parametric model which is characterized by the
following features: (i) a ``local'' reinforcement, i.e.~a
reinforcement mechanism mainly based on the last observations, (ii) a
random persistent fluctuation of the predictive mean, and (iii) a
long-term convergence of the empirical mean to a deterministic limit,
together with a chi-squared goodness of fit result.  This triple
purpose has been achieved by the introduction of a new variant of the
Eggenberger-P\'olya urn, that we call the ``Rescaled'' P\'olya urn. We
provide a complete asymptotic characterization of this model, pointing
out that, for a certain choice of the parameters, it has properties
different from the ones typically exhibited from the other urn models
in the literature. Therefore, beyond the possible statistical
application, this work could be interesting for those who are
concerned with stochastic processes with reinforcement.  \\

\noindent {\em keywords:} empirical mean; central limit theorem; 
chi-squared test; compact Markov chain; P\'olya urn; predictive mean; 
preferential attachment; reinforcement learning; 
reinforced stochastic process; urn model.}


\section{Introduction: framework and motivation}
The well-known Pearson's chi-squared test of goodness of fit is a
statistical test applied to categorical data to establish whether an
observed frequency distribution differs from a theoretical probability
distribution. In this test the observations are always assumed to be
i.i.d., that is independent and identically distributed.  Under this
hypothesis, in a multinomial sample of size $N$, the chi-squared
statistics
\begin{equation}\label{eq:chi1_start}
\chi^2 = 
\sum_{i=1}^k \frac{( O_i - E_i)^2}{E_i} = 
N \sum_{i=1}^k \frac{( \widehat{p}_i - p_i)^2}{p_i}
\end{equation}
(where $k$ is the number of possible values and $O_{i}$, $E_{i}$,
$\widehat{p}_i=O_i/N$ and ${p}_i=E_i/N$ are the observed and expected
absolute and relative frequencies, respectively) is proportional to
$N$, that multiplies the chi-squared distance between the observed and
expected probabilities.  Therefore, the goodness of fit test based on
this statistics is highly sensitive to the sample size $N$ (see, for
instance, \cite{bergh, knoke2002}): the larger $N$, the more
significant a small value of the chi-squared distance. More precisely,
the value of the chi-squared distance has to be compared with the
``critical'' value $\chi^2_{1-\theta}(k-1)/N$, where $
\chi^2_{1-\theta}(k-1)$ denotes the quantile of order $1-\theta$ of
the chi-squared distribution $\chi^2(k-1)$ with $k-1$ degrees of
freedom. Hence, it is clear that the larger $N$, the easier the
rejection of $H_0$. As a consequence, in the context of ``big data''
(e.g.~\cite{bergh, BERTONI2018}), where one often works with
correlated noised data, suitable generative models and related
chi-squared goodness of fit tests are needed.  \\ \indent Different
types of correlation have been taken into account and different
techniques have been developed to control the performance of the
goodness of fit test based on \eqref{eq:chi1_start} (see, among
others, \cite{bergh, chanda, MR0403125, gleser, MR1894384, PeTaGu08,
  radlow, MR3190613}, where some form of correlation is introduced in
the sample and variants of the chi-squared statistics are proposed and
analyzed mainly by means of simulations).  Our approach differs from
the one adopted in the previously quoted papers. Indeed, our starting
point is that a natural way to get a positive correlation between
events of the same type is to deal with the Dirichlet-Multinomial
(D-M) distribution: briefly, the parameters of the D-M distribution is
randomized a priori with a Dirichlet distribution, obtaining an
exchangeable (not independent) sequence.  The variance-covariance
matrix of the D-M distribution is equal to the one of the Multinomial
(M) distribution, multiplied by a fixed constant greater than $1$:
precisely, given the $k$ parameters $\bs{b_0} = ({b_{0\,1}}, \ldots,
{b_{0\,k}})$ of the D-M distribution and setting $|\bs{b_0}| =
\sum_{i=1}^k {b_{0\,i}}$, we have
\begin{align*}
Var_{\text{D-M}} (O_i) & = N \frac{b_{0\,i}}{|\bs{b_0}|} 
\Big( 1 - \frac{b_{0\,i}}{|\bs{b_0}|} \Big)
\frac{N+|\bs{b_0}| }{1+ |\bs{b_0}|} = 
Var_{\text{M}} (O_i)  \frac{N+|\bs{b_0}| }{1+ |\bs{b_0}|} ,
\\
Cov_{\text{D-M}} (O_i,O_j) & = -N \frac{b_{0\,i}b_{0\,j}}{|\bs{b_0}|^2} 
\frac{N+|\bs{b_0}|}{1+ |\bs{b_0}|} = 
Cov_{\text{M}} (O_i,O_j)  \frac{N+|\bs{b_0}| }{1+ |\bs{b_0}|} , 
\qquad \mbox{for } i\neq j.
\end{align*}
Therefore, if we set $|\bs{b_0}| = \frac{1-\rho^2}{\rho^2}$, we have,
for any $i,j \in \{1, \ldots, k\}$
\begin{equation}\label{eq:var_cov-proportional}
Cov_{\text{D-M}} (O_i,O_j) = 
\big(1+ (N-1)\rho^2\big) \, Cov_{\text{D}} (O_i,O_j)  ,
\end{equation}
where $\rho$ represents a correlation parameter.  Roughly speaking,
the Dirichlet-Multinomial model adds variance to the multinomial model
by taking a mixture or by adding a positive correlation.  Property
\eqref{eq:var_cov-proportional} is fundamental for our purpose. In
fact, as well highlighted in \cite{RaoScott81}, the two conditions (i)
$\widehat{p}_i=O_i/N \to p_i$ almost surely for $N\to\infty$ and (ii)
$Cov(O_i,O_j)=\lambda Cov_{\text{D}}(O_i,O_j)$ with $\lambda>1$ imply
that the statistics $\chi^2$, defined in \eqref{eq:chi1_start}, is
asymptotically distributed as $\chi^2(k-1)\lambda$ (see
\cite[Corollary~2]{RaoScott81}), so that the critical value for the
chi-squared distance becomes $ \chi^2_{1-\theta}(k-1)\lambda/N$, where
$\lambda$ mitigate the effect of $N$. As already observed, the D-M
model satisfies (ii), but it is well-known that it does not meet
condition (i). In this paper we give a parametric extension of the D-M
model so that both of the above conditions hold true.  \\ \indent The
Dirichlet-Multinomial distribution may be generated by means of the
standard Eggenberger-P\'olya urn (see \cite{EggPol23, mah}), a model
that has been widely studied and generalized (some recent variants can
be found in \cite{AlGhPa, AlGhRo, AlGhVi, BeCrPrRi16, cal-che-cri-pam,
  caron2017, chen2013, collevecchio2013, Cr16, ghiglietti2014,
  ghiglietti2017, lar-pag}). This urn model with $k$-colors works as
follows.  An urn contains $N_{0\, i}$ balls of color $i$, for
$i=1,\dots, k$, and, at each discrete time, a ball is drawn out from
the urn and then it is put again inside the urn together with
$\alpha>0$ additional balls of the same color. Therefore, if we denote
by $N_{n\, i}$ the number of balls of color $i$ in the urn at time
$n$, we have for $n\geq 1$
\begin{equation*}
N_{n\, i}=N_{n-1\,i}+\alpha\xi_{n\,i},
\end{equation*}
where $\xi_{n\,i}=1$ if the extracted ball at time $n$ is of color
$i$, and $\xi_{n\,i}=0$ otherwise. The parameter $\alpha$ regulates
the reinforcement mechanism: the greater $\alpha$, the greater the
dependence of $N_{n\,i}$ on $\sum_{m=1}^n\xi_{m\,i}$.  In addition, it
is well known that the \emph{conditional expectation} of the
sequential extractions, i.e.~$E[\xi_{n+1\,i} |\, \mbox{``past''}]$,
also known as the~\emph{predictive mean}, converges almost surely to a
beta-distributed random variable, forcing the {\em empirical mean}
$\bar{\xi}_{N\,i} = \sum_{n=1}^N\xi_{n\,i}/N$ to converge almost
surely to the same limit.  \\ \indent In this work we exhibit an urn
model that preserves the relevant aspects of the models above: a
reinforcement mechanism, together with a global almost sure
convergence of the empirical mean of the sequential extraction toward
a fixed limit.  However, differently from the previous models, for a
certain choice of the parameters, the predictive mean $E[\xi_{n+1\,i}
  |\, \mbox{``past''}]$ randomly fluctuates without converging almost
surely, forming asymptotically a stationary ergodic process.  As a
consequence, since the classical martingale approach and the standard
stochastic approximation require or imply the convergence of
$E[\xi_{n+1\,i} |\, \mbox{``past''}]$ (e.g.~\cite{AlCrGh, BeCrPrRi11,
  lar-pag}), in oreder to prove asymptotic results for the introduced
new urn model, we need mathematical methods that are not usual in urn
modeling literature.

\subsubsection*{``Rescaled'' P\'olya urn}
We introduce a new variant of the Eggenberger-P\'olya urn with
$k$-colors, that we call the ``Rescaled'' P\'olya (RP) urn model. In
this model, the almost sure limit of the empirical mean of the draws
will play the r\^ole of an intrinsic long-run characteristic of the
process, while a local mechanism generates persistent fluctuations.
More precisely, the RP urn model is characterized by the introduction
of the parameter $\beta$, together with the initial parameters
$(b_{0\, i})_{i=1,\dots,k}$ and $(B_{0\, i})_{i=1,\dots,k}$, next to
the parameter $\alpha$ of the original model, so that
\begin{equation}\label{eq-dynamics-intro}
\begin{aligned}
N_{n\, i}& =b_{0\, i}+B_{n\, i} &&\text{with }
\\
B_{n\, i}&=\beta B_{n-1\, i}+\alpha\xi_{n\, i}&& n\geq 1.
\end{aligned}
\end{equation}
Therefore, the urn initially contains $b_{0\,i}+B_{0\,i}$ balls of
color $i$ and the parameter $\beta\geq 0$, together with $\alpha>0$,
regulates the reinforcement mechanism. More precisely, $N_{n\,i}$ is
the sum of three terms:
\begin{itemize}
\item the term $b_{0\,i}$, which remains constant along time; 
\item the term $\beta B_{n-1\,i}$, which links $N_{n\,i}$ to the
  ``configuration'' at time $n-1$, through the ``scaling'' parameter
  $\beta$ that tunes the dependence on this factor;
\item the term $\alpha\xi_{n\,i}$, which links $N_{n\,i}$ to the
  outcome of the extraction at time $n$, through the parameter $\alpha$
  that tunes the dependence on this factor.
\end{itemize}
Note that the case $\beta=1$ corresponds to the standard
Eggenberger-P\'olya urn with an initial number
$N_{0\,i}=b_{0\,i}+B_{0\,i}$ of balls of color $i$; while, when
$\beta\neq 1$, the RP urn does not fall in the variants of the
Eggenberger-P\'olya urn discussed in \cite[Section 3.2]{pemantle2007}
and, as explained in details in Section~\ref{urn-model}, it does not
belong to the class of Reinforced Stochastic Processes studied in
\cite{AlCrGh, ale-cri-ghi-WEIGHT-MEAN, ale-cri-ghi-MEAN,
  cri-dai-lou-min, CrDPMi, dai-lou-min, sah}.\\ \indent The quantities
$p_{0\,1},\ldots,p_{0\,k}$ defined as
\begin{equation}\label{def-p_0-intro}   
p_{0\,i}=\frac{b_{0\,i}}{\sum_{i=1}^k b_{0\,i}}
\end{equation}
can be seen as an intrinsic probability distribution on the possible
values (colors) $\{1,\dots,k\}$, that remains constant along time, and
that will be related to the long-term characteristic of the process;
while the random variables $(B_{n\,1},\dots, B_{n\,k})$ model random
fluctuations during time so that the probability distribution on the
set of the $k$ possible values at time $n$ is given by
\begin{equation*}
\psi_{n\,i}=\frac{N_{n\, i}}{\sum_{i=1}^k N_{n\, i}}=
\frac{b_{0\, i}+B_{n\, i}}{\sum_{i=1}^k b_{0\, i}+\sum_{i=1}^k B_{n\, i}}.
\end{equation*}
Assuming for $B_{n\,i}$ the dynamics \eqref{eq-dynamics-intro} with
$\beta>0$, the probability $\psi_{n\,i}$ results increasing with the
number of times we observed the value $i$ (see the following equation
\eqref{eq-psi_n}) and so the random variables $\xi_{n\,i}$ are
generated according to a reinforcement mechanism. But, in particular,
when $\beta<1$, the reinforcement at time $n$ associated to
observation $\xi_{m\,i}$, with $m=1,\dots,n$, increases exponentially
with $m$ (we refer again to the following equation \eqref{eq-psi_n}),
leaving the fluctuations be driven by the most recent draws. We refer
to this feature as ``local'' reinforcement. The case $\beta=0$ is an
extreme case where $\psi_{n\,i}$ depends only on the last draw
$\xi_{n\,i}$ (and not on $\xi_{m\,i}$, with $m=1,\dots,n-1$). Hence,
we are mainly interested in the case $\beta\in [0,1)$, because in this
  case the RP urn exhibits the following distinctive characteristics:
\begin{itemize}
\item[(a)] for each $i$, the process $(\psi_{n\,i})_n$ randomly
  fluctuates, driven by the most recent observations (``local''
  reinforcement), and does not converge almost surely;
\item[(b)] for each $i$, the empirical mean $\bar{\xi}_{N\,i} =
  \sum_{n=1}^N\xi_{n\,i}/N$, that is the empirical frequency $O_i/N$,
  converges almost surely to the deterministic limit $p_i$;
\item[(c)] the chi-squared statistics \eqref{eq:chi1_start} is
  asymptotically distributed as $\chi^2(k-1)\lambda$ with $\lambda>1$.
\end{itemize}
As said before, due to (a), the usual methods adopted in the urn
literature do not work for $\beta < 1$ and so different techniques are
needed for the study of the RP urn model.\\ \indent We have also
considered the asymptotic results for $\beta>1$, to complete the study
of the RP urn model. In this situation, the process $(\psi_{n\,i})_n$
converges exponentially fast to a random limit, and so even faster
than in the classical Eggenberger-P\'olya urn. Therefore, in this
case, we may apply the usual martingale technique (e.g.~\cite{AlCrGh,
  BeCrPrRi11, lar-pag}).  \\
\begin{figure}[tbp]
\begin{center}
\fbox{\includegraphics[width= 0.65\textwidth]{./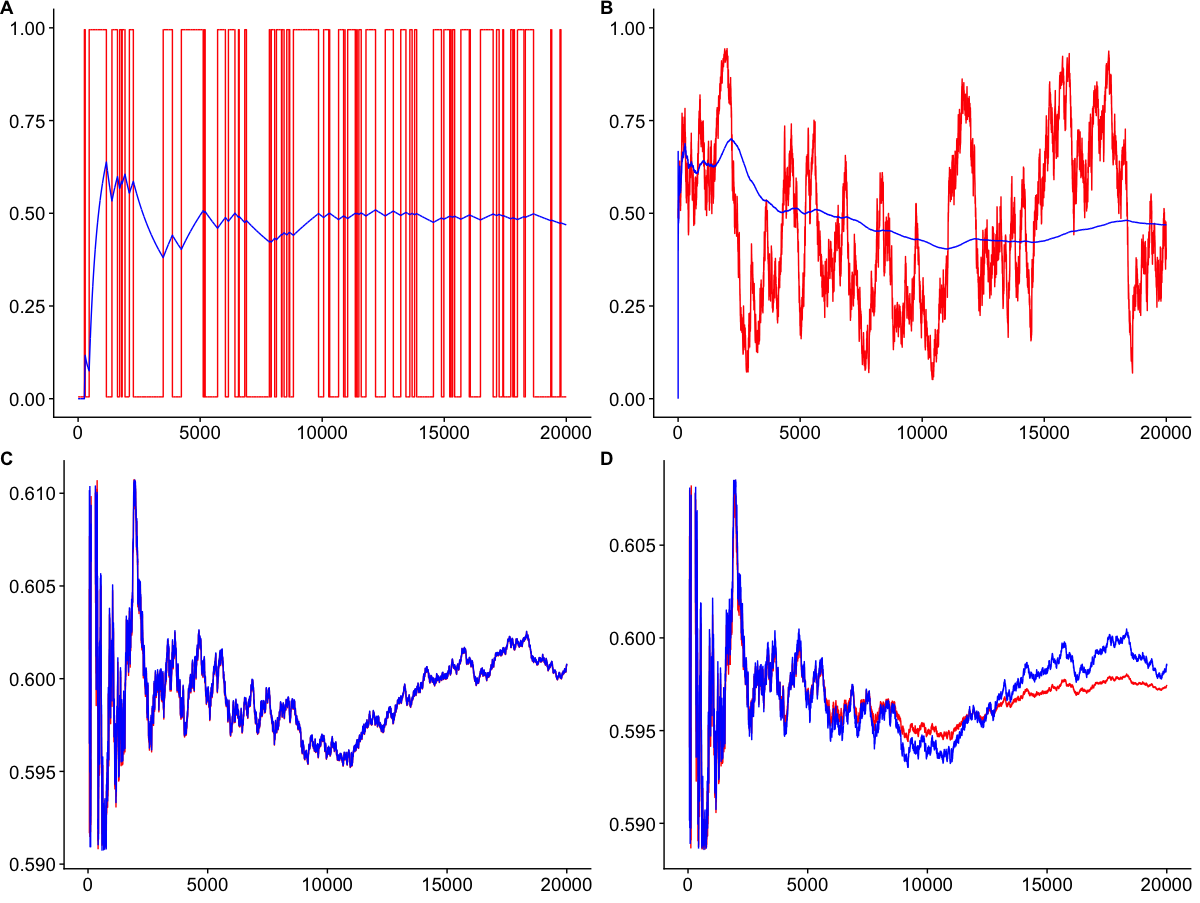}}
\end{center}
\caption{Simulations of the two processes $(\psi_{n\, 1})_n$ (red
  color) and $(\bar{\xi}_{n\,1} )_n$ (blue color), with $n = 1,
  \ldots, 20000$, $p_{0\,1} = \frac{1}{2}$ and for different values of
  $\alpha$ and $\beta$: (A) $\alpha = 199$, $\beta = 0$; (B) $\alpha =
  1$, $\beta = 0.975$; (C) $\alpha = 1$, $\beta = 1$; (D) $\alpha =
  0.5$, $\beta = 1.0001$.  As shown, when $\beta <1$, $(\psi_{n\,
    1})_n$ exhibits a persistent fluctuation, locally reinforced, and
  $(\bar{\xi}_{n\,1} )_n$ converges to the deterministic limit
  $p_{0\,1}$. When $\beta \geq 1$, the $y$-axis is zoomed to show the
  random fluctuations of both the processes towards the same random
  limit.} \label{fig:comparison_Beta}
\end{figure}
\indent In Figure~\ref{fig:comparison_Beta} we show the properties (a)
and (b) for $\beta = 0$ and $\beta \in (0,1)$
(Figure~\ref{fig:comparison_Beta}(A) and
Figure~\ref{fig:comparison_Beta}(B), respectively) compared with the
classical behavior of the processes for $\beta = 1$ and $\beta > 1$
(Figure~\ref{fig:comparison_Beta}(C) and
Figure~\ref{fig:comparison_Beta}(D), respectively).

\subsubsection*{Goodness of fit result}
Given a sample $(\bs{\xi_{1}}, \ldots, \bs{\xi_{N}})$ (where
$\bs{\xi_n}$ denotes the random vector with components $\xi_{n\,i}$,
$i=1,\dots,k$) generated by a RP urn, the statistics
\begin{equation*}
O_i = \#\{n \colon \xi_{n\,i}=1\}=\sum_{n=1}^N \xi_{n\,i}, 
\, \qquad i = 1, \ldots, k,
\end{equation*}
counts the number of times we observed the value $i$.  The theorem
below shows that, when $\beta\in [0,1)$, we can construct a
  chi-squared test for the intrinsic long-run probabilities
  $p_{0\,1},\dots,p_{0\,k}$. More precisely, we will prove the
  following result:

\begin{theorem}\label{th-chi-squared-test}
Assume $p_{0\,i}>0$ for all $i=1,\dots,k$ and $\beta \in
[0,1)$. Define the constants $\gamma$ and $\lambda$ as
\begin{equation}\label{eq:def_gamma}
\gamma = \beta + (1-\beta) \frac{\alpha}
{(1-\beta)\sum_{i=1}^k b_{0\,i} + {\alpha}}
\in (\beta,1)
\qquad\mbox{and }
\end{equation}
\begin{equation}\label{def-lambda}
\lambda = \frac{(1-\beta)^2}{(\gamma-\beta)^2 + (1-\gamma^2)}
\Big( 1 + 2
\frac{\gamma}{1-\gamma}
\Big)>1.
\end{equation}
Then  $O_i/N\stackrel{a.s.}\longrightarrow p_{0\,i}$ and  
\[
\sum_{i=1}^{k} \frac{(O_{i} - N p_{0\,i})^2}{Np_{0\,i}} 
\mathop{\longrightarrow}^{d}_{N\to\infty}
W_{*}= \lambda W_{0}
\]
where $W_{0}$ has distribution
$\chi^2(k-1)=\Gamma\big(\frac{(k-1)}{2},\frac{1}{2})$ and,
consequently, $W_{*}$ has distribution $\Gamma\big(\frac{k-1}{2},
\frac{1}{2\lambda}\big)$.
\end{theorem}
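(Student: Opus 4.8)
The plan is to reduce the statement to a multivariate martingale central limit theorem for an auxiliary process, and then to a continuous‑mapping / quadratic‑form argument. Write $\mathcal{F}_n=\sigma(\bs{\xi_1},\dots,\bs{\xi_n})$, so that $\E[\xi_{n\,i}\mid\mathcal{F}_{n-1}]=\psi_{n-1\,i}$ and $\bs{\xi_n}$ is conditionally a single multinomial trial. Set $b_0=\sum_i b_{0\,i}$ and note from \eqref{eq-dynamics-intro} that $B_n:=\sum_i B_{n\,i}$ is \emph{deterministic}, obeying $B_n=\beta B_{n-1}+\alpha$, hence $s_n:=b_0+B_n\to c:=b_0+\alpha/(1-\beta)$ exponentially fast, with the useful identities $\alpha/c=\gamma-\beta$ and $\tfrac1\alpha+\tfrac1{c(1-\gamma)}=\tfrac{1-\beta}{\alpha(1-\gamma)}$ (here $\gamma$ is as in \eqref{eq:def_gamma}). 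The right centered quantity is $V_{n\,i}:=B_{n\,i}-p_{0\,i}B_n$: it satisfies $\sum_i V_{n\,i}=0$, is deterministically bounded (since $0\le B_{n\,i}\le B_n$), obeys $\psi_{n\,i}-p_{0\,i}=V_{n\,i}/s_n$, and from \eqref{eq-dynamics-intro} satisfies $V_{n\,i}=\beta V_{n-1\,i}+\alpha(\xi_{n\,i}-p_{0\,i})$. Splitting $\xi_{n\,i}-p_{0\,i}=(\xi_{n\,i}-\psi_{n-1\,i})+V_{n-1\,i}/s_{n-1}$ rewrites this as the near‑autoregressive recursion $V_{n\,i}=\gamma_n V_{n-1\,i}+\epsilon_{n\,i}$, with $\gamma_n:=\beta+\alpha/s_{n-1}\to\gamma\in(0,1)$ and $\epsilon_{n\,i}:=\alpha(\xi_{n\,i}-\psi_{n-1\,i})$ a bounded $(\mathcal{F}_n)$‑martingale difference summing to zero over $i$. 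Summing $\epsilon_{n\,i}=V_{n\,i}-\gamma_n V_{n-1\,i}$, using $\sum_n|\gamma_n-\gamma|<\infty$ (exponential convergence) and boundedness of $V_{n\,i}$, gives $(1-\gamma)\sum_{n=0}^{N-1}V_{n\,i}=M_{N\,i}+O(1)$ a.s., where $M_{N\,i}:=\sum_{n=1}^N\epsilon_{n\,i}$. Feeding this together with $O_{N\,i}-Np_{0\,i}=\sum_{n=1}^N(\xi_{n\,i}-\psi_{n-1\,i})+\sum_{n=0}^{N-1}V_{n\,i}/s_n$ into the identity above yields the key reduction
\[
O_{N\,i}-Np_{0\,i}=\kappa\,M_{N\,i}+O(1)\ \text{ a.s.},\qquad\kappa:=\frac{1-\beta}{\alpha(1-\gamma)}.
\]

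The almost sure convergence $O_{N\,i}/N\to p_{0\,i}$ is then immediate, since $M_{N\,i}$ has bounded increments, so $M_{N\,i}/N\to0$ a.s. The crux — and the step I expect to be the main obstacle — is to identify the \emph{limiting conditional covariance} of $M_N=(M_{N\,i})_i$: here $\E[\epsilon_{n\,i}\epsilon_{n\,j}\mid\mathcal{F}_{n-1}]=\alpha^2(\psi_{n-1\,i}\delta_{ij}-\psi_{n-1\,i}\psi_{n-1\,j})$ does \emph{not} converge — this is precisely the non‑convergence of $\psi_n$, feature (a) — and only its Cesàro average does. To compute it I would apply $\E[\,\cdot\mid\mathcal{F}_{n-1}]$ to $V_{n\,i}V_{n\,j}=\gamma_n^2V_{n-1\,i}V_{n-1\,j}+\gamma_n(\,\cdots\,)+\epsilon_{n\,i}\epsilon_{n\,j}$ (the omitted cross terms forming a bounded martingale difference), then sum, divide by $N$, and use $\gamma_n\to\gamma$ together with $M_{N\,i}/N\to0$ — which gives $\tfrac1N\sum_{n<N}V_{n\,i}\to0$ and $\tfrac1N\sum_{n<N}\psi_{n\,i}\psi_{n\,j}=p_{0\,i}p_{0\,j}+c^{-2}\big(\tfrac1N\sum_{n<N}V_{n\,i}V_{n\,j}\big)+o(1)$. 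This produces the self‑consistent relation $\big[(1-\gamma^2)+(\gamma-\beta)^2\big]\big(\tfrac1N\sum_{n<N}V_{n\,i}V_{n\,j}\big)=\alpha^2\big(p_{0\,i}\delta_{ij}-p_{0\,i}p_{0\,j}\big)+o(1)$ (using $\alpha^2/c^2=(\gamma-\beta)^2$). Since the bracket is strictly positive, the relation has a unique solution, so it both proves the Cesàro limit $\bar C_{ij}:=\lim_N\tfrac1N\sum_{n<N}V_{n\,i}V_{n\,j}$ exists and pins it down; in particular $\Sigma_{ij}:=\lim_N\tfrac1N\sum_{n=1}^N\E[\epsilon_{n\,i}\epsilon_{n\,j}\mid\mathcal{F}_{n-1}]=(1-\gamma^2)\bar C_{ij}$.

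With $\Sigma$ in hand I would invoke the multivariate martingale CLT (the conditional Lindeberg condition is trivial by boundedness of the increments), obtaining $M_N/\sqrt N\stackrel{d}{\longrightarrow}\mathcal{N}(0,\Sigma)$, hence $(O_N-Np_0)/\sqrt N\stackrel{d}{\longrightarrow}\mathcal{N}(0,\kappa^2\Sigma)$ by the reduction. A short computation using $\alpha/c=\gamma-\beta$, $1-\gamma^2=(1-\gamma)(1+\gamma)$ and $1+2\gamma/(1-\gamma)=(1+\gamma)/(1-\gamma)$ shows
\[
\kappa^2\Sigma_{ij}=\frac{(1-\beta)^2(1-\gamma^2)}{(1-\gamma)^2\big[(1-\gamma^2)+(\gamma-\beta)^2\big]}\big(p_{0\,i}\delta_{ij}-p_{0\,i}p_{0\,j}\big)=\lambda\big(p_{0\,i}\delta_{ij}-p_{0\,i}p_{0\,j}\big),
\]
with $\lambda$ exactly as in \eqref{def-lambda}. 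Thus $(O_N-Np_0)/\sqrt N\stackrel{d}{\longrightarrow}\sqrt\lambda\,Y$ with $Y\sim\mathcal{N}(0,\Sigma_0)$, $\Sigma_{0\,ij}=p_{0\,i}\delta_{ij}-p_{0\,i}p_{0\,j}$ the multinomial covariance. The statistic in \eqref{eq:chi1_start} is the continuous map $x\mapsto\sum_i x_i^2/p_{0\,i}$ applied to $(O_N-Np_0)/\sqrt N$, so it converges in distribution to $\lambda\sum_i Y_i^2/p_{0\,i}$; and for $Y\sim\mathcal{N}(0,\Sigma_0)$ the classical argument (conjugating by $\mathrm{diag}(p_0)^{-1/2}$ turns $\Sigma_0$ into the rank‑$(k-1)$ orthogonal projection off the unit vector $(\sqrt{p_{0\,i}})_i$) gives $\sum_i Y_i^2/p_{0\,i}\sim\chi^2(k-1)$. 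Hence the limit is $W_*=\lambda W_0$ with $W_0\sim\chi^2(k-1)=\Gamma(\tfrac{k-1}{2},\tfrac12)$, and since scaling a Gamma by $\lambda$ divides the rate, $W_*\sim\Gamma(\tfrac{k-1}{2},\tfrac{1}{2\lambda})$; the inequality $\lambda>1$ is a separate algebraic check on \eqref{def-lambda} given $\gamma\in(\beta,1)$.

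The one genuinely non‑standard point is thus the second paragraph: because $\E[\epsilon_{n\,i}\epsilon_{n\,j}\mid\mathcal{F}_{n-1}]$ has no almost sure limit, the usual martingale / stochastic‑approximation route does not apply directly, and one must extract $\Sigma$ from the self‑consistency relation for the Cesàro limit of $V_{n\,i}V_{n\,j}$. Equivalently, one can phrase $(V_n,s_n)$ as an asymptotically time‑homogeneous, uniquely ergodic compact Markov chain and read off both the Cesàro limits and the CLT from ergodic theory, which is the route suggested by the ``compact Markov chain'' keyword.
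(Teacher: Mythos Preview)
Your argument is correct and genuinely different from the paper's route. The paper does \emph{not} carry out a direct martingale analysis: it first proves the vector CLT
\(
\sqrt{N}\big(\bs{\overline{\xi}_N}-\bs{p_0}\big)\Rightarrow\mathcal{N}\big(\bs{0},\lambda(\mathrm{diag}(\bs{p_0})-\bs{p_0}\bs{p_0}^\top)\big)
\)
by (i) a coupling reduction to the special initial condition $|\bs{B_0}|=\alpha/(1-\beta)$, under which $(\bs{\psi_n})$ becomes a time-homogeneous Markov chain on a compact simplex; (ii) verifying the Doeblin--Fortet (compact Markov chain) conditions and irreducibility/aperiodicity via a second coupling; (iii) applying the Guivarc'h--Hardy spectral CLT together with an abstract ``linearity'' computation to identify $\Sigma^2$ through the invariant covariance $\Sigma_\pi^2$. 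The case $\beta=0$ is handled separately by the finite-state ergodic CLT. The final chi-squared step is done by truncating to the first $k-1$ coordinates and inverting the covariance via the Sherman--Morrison formula.

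Your path bypasses all of this machinery. The decomposition $O_{N\,i}-Np_{0\,i}=\kappa M_{N\,i}+O(1)$ reduces everything to a bounded-increment martingale, and the self-consistency trick---summing the recursion for $V_{n\,i}V_{n\,j}$ and using $\alpha^2/c^2=(\gamma-\beta)^2$ to close the equation for the Ces\`aro limit---replaces the invariant-measure computation. This is more elementary (no Doeblin--Fortet operators, no Ionescu-Tulcea--Marinescu spectral theory), it treats $\beta=0$ and $\beta\in(0,1)$ uniformly, and it handles general $|\bs{B_0}|$ directly through $\gamma_n\to\gamma$ with $\sum_n|\gamma_n-\gamma|<\infty$, rather than via a separate coupling. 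What the paper's route buys in exchange is structural information you do not extract: uniqueness and an explicit description of the invariant law $\pi$ for $(\bs{\psi_n})$, and a Berry--Esseen rate $O(N^{-1/2})$ inherited from the Guivarc'h--Hardy theorem. Your projection argument for the quadratic form (conjugating by $\mathrm{diag}(\bs{p_0})^{-1/2}$) is equivalent to, and arguably cleaner than, the paper's Sherman--Morrison computation.
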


\subsubsection*{Statistical application}
A possible application we have in mind was inspired by
\cite{BERTONI2018, AM2019} and is the following.  We suppose to have a
sample $\{\bs{\xi}_n:\,n=1,\dots,N\}$, where the observations can not
be assumed i.i.d, but they exhibit a structure in clusters, with
independence between clusters and with correlation inside each
cluster. This is a usual circumstance in many applications
(e.g.~\cite{chessa, ieva, tharwat, xu}). More precisely, we consider
the situation when inside each cluster the probability that a certain
unit chooses the value $i$ is affected by the number of units in the
same cluster that have already chosen the value $i$, hence according
to a reinforcement rule. For example, we can imagine that our dataset
collects messages from the on-line social network Twitter: ``tweets''
referring to different topics can be placed in different clusters. If
the topics are distant each other, we can assume independence between
clusters. Inside each cluster, the tweets are temporally ordered and
the associated ``sentiment'' is observed to be driven by a local
reinforcement mechanism: the probability to have a tweet with positive
sentiment is increasing with the number of past tweets with positive
sentiment, but the reinforcement is mostly driven by the most recent
tweets, leading to a fluctuations of the predictive means. A different
clustering of the tweets can be obtained with different slots of time,
sufficiently far from each other.  Another example is the
following. Each cluster corresponds to an agent. The agents act
independently of each other (Independence between clusters). At each
time-step each agent has to choose between $k$ brands, that are
related to a loyalty program: the more he/she selects the same brand,
the more loyalty points he/she gain. This fact induces the
reinforcement mechanism and it could make sense that the reinforcement
is mostly driven by the most recent actions. Finally, we can have the
case where clusters are associated to some products and, at each
time-step a customer has to give a vote to each product on an on-line
platform. Each cluster collects the votes for the corresponding
product. If the products belong to very different categories, we can
assume independence between clusters; while, if the customers can see
the votes given by the previous customers, we can have a reinforcement
mechanism, mainly based on the last observations.  \\

\indent Formally, we suppose that the $N$ units are ordered so that we
have the following $L$ clusters of units:
\begin{equation*}
  C_{\ell}=\left\{\sum_{l=1}^{\ell-1}N_l+1,\dots, \sum_{l=1}^\ell N_l\right\},
  \qquad \ell=1,\dots,L.
\end{equation*}
Therefore, the cardinality of each cluster $C_\ell$ is $N_\ell$. We
assume that the units in different clusters are independent, that is
$$
[\bs{\xi_1},\dots, \bs{\xi_{N_1}}],\,\dots\,,
[\bs{\xi_{\sum_{l=1}^{\ell-1}N_l+1}},\dots, \bs{\xi_{\sum_{l=1}^\ell N_l}}],
\,\dots\,,
[\bs{\xi_{\sum_{l=1}^{L-1}N_l+1}},\dots, \bs{\xi_{N}}]
$$ are $L$ independent multidimensional random variables. Moreover, we
assume that the observations inside each cluster can be modeled as a
RP urn with $\beta\in [0,1)$.  We denote by $
  p_{0\,1}(\ell),\dots,p_{0\,k}(\ell)$ the intrinsic long-run
  probabilities for the cluster $C_\ell$, that we assume strictly
  positive, and we assume the same parameter $\lambda>1$ for each
  cluster (not necessarily the same parameters $\alpha$ and $\beta$)
  so that all the $L$ random variables
$$
Q_\ell=
\sum_{i=1}^{k} \frac{\big(O_{i}(\ell) - N_\ell p_{0\,i}(\ell)\big)^2}
{N_\ell p_{0\,i}(\ell)}, 
\text{ with }
O_i(\ell)= \#\{n\in C_\ell \colon \xi_{n\,i}=1\}, 
$$ 
are asymptotically distributed as $\Gamma\big(\frac{k-1}{2},
\frac{1}{2\lambda}\big)$.  Since $Q_1,\dots,Q_L$ are independent
because they refer to different clusters, when all the cluster sizes
$N_\ell$ are large, we can estimate the parameter $\lambda$ by means
of the (asymptotic) maximum likelihood and obtain
$$
\widehat{\lambda}=\frac{\sum_{\ell=1}^L Q_\ell}{L(k-1)}
\stackrel{d}\sim
\Gamma\Big(\frac{L(k-1)}{2}, \frac{L(k-1)}{2\lambda}\Big).
$$ 
Note that $E[\widehat{\lambda}]=\lambda$, that is the estimator is
unbiased. Moreover, $\widehat{\lambda}/\lambda$ has asymptotic
distribution $\Gamma\big(\frac{L(k-1)}{2}, \frac{L(k-1)}{2}\big)$
(that not depends on $\lambda$) and so it can be used in order to
construct asymptotic confidence intervals for $\lambda$.  Moreover,
given certain (strictly positive) intrinsic probabilities
$p_{0\,1}^*(\ell),\dots,p_{0\,k}^*(\ell)$ for each cluster $C_\ell$,
we can use the above procedure with $p_{0\,i}(\ell)=p_{0\,i}^*(\ell)$
for $i=1,\dots, k$ and $\ell=1,\dots,L$ in order to obtain an estimate
$\widehat{\lambda}^*$ of $\lambda$, and then use the statistics
$Q_\ell$ with $p_{0\,i}(\ell)=p_{0\,i}^*(\ell)$ and the corresponding
asymptotic distribution $\Gamma\big(\frac{k-1}{2},
\frac{1}{2\widehat{\lambda}^*}\big)$ in order to perform a
$\chi^2$-test with null hypothesis
$$
H_0:\quad p_{0\,i}(\ell)=p_{0\,i}^*(\ell)\quad\forall i=1,\dots,k.
$$ 
Regarding the probabilities $p_{0\,i}^*(\ell)$, some possibilities
are:
\begin{itemize}
 \item we can take $p_{0\,i}^*(\ell)=1/k$ for all $i=1,\dots,k$ if we
   want to test possible differences in the probabilities for the $k$
   different values;
 \item we can suppose to have two different periods of times, and so
   two samples, say $\{\bs{\xi^{(1)}_n}:\,n=1,\dots,N\}$ and
   $\{\bs{\xi^{(2)}_n}:\,n=1,\dots,N\}$, take
   $p_{0\,i}^*(\ell)=\sum_{n\in C_\ell}\xi^{(1)}_{n\,i}/N_\ell$ for
   all $i=1,\dots, k$, and perform the test on the second sample in
   order to check possible changes in the intrinsic long-run
   probabilities;
  \item we can take one of the clusters as benchmark, say $\ell^*$,
    set $p_{0\,i}^*(\ell)=\sum_{n\in C_{\ell^*}}\xi_{n\,i}/N_{\ell^*}$
    for all $i=1,\dots, k$ and $\ell\neq\ell^*$, and perform the test
    for the other $L-1$ clusters in order to check differences with
    the benchmark cluster $\ell^*$.
\end{itemize}

\subsubsection*{Structure of the paper}
Summing up, the sequel of the paper is so structured. In Section
\ref{urn-model} we set up our notation and we formally define the RP
urn model with parameters $\alpha>0$ and $\beta\geq 0$. In Section
\ref{study-urn-model} we provide a complete characterization of the RP
urn for the three cases $\beta=0$, $\beta\in [0,1)$ and $\beta>1$. (We
  do not deal with the case $\beta=1$ because, as said before, it
  coincides with the standard Eggenberger-P\'olya urn, whose
  properties are well-known).  In particular, we show that, for each
  $i$, the empirical mean of the $\xi_{n\,i}$ almost surely converges
  to the intrinsic probabilities $p_{0\,i}$ when $\beta\in [0,1)$;
    while it almost surely converges to a random limit when $\beta>
    1$. We obtain also the corresponding CLTs, that, in particular for
    $\beta\in [0,1)$, are the basis for the proof of Theorem
      \ref{th-chi-squared-test}.  For completeness, we also describe
      the case $\alpha = 0$, that generates a sequence of independent
      draws. Section \ref{proof-chi-squared-test} contains the proof
      of Theorem \ref{th-chi-squared-test}, which gives the
      possibility to construct a chi-squared test for the intrinsic
      long-run probabilities when the observed sample is assumed to be
      generated by a RP urn with $\beta\in [0,1)$. Finally, the paper
        contains an Appendix: in Section \ref{sec-clt-markov} we state
        and prove a general CLT for Markov chains with a compact state
        space $S\subset\mathbb{R}^k$, under a certain condition, that
        we call ``linearity'' condition, and in Section
        \ref{sec-coupling} we explain a fundamental coupling technique
        used in the proof of the CLT for $\beta\in (0,1)$.


\section{The ``Rescaled'' P\'olya urn model}
\label{urn-model}

In all the sequel (unless otherwise specified) we suppose given two
parameters $\alpha>0$ and $\beta \geq 0$.  Given a vector $\bs{x}=
(x_1, \ldots, x_k)^\top\in \mathbb{R}^k$, we set \( |\bs{x}| =
\sum_{i=1}^k |x_i| \) and \( \|\bs{x}\|^2 = \bs{x}^\top \bs{x}=
\sum_{i=1}^k |x_i|^2 \).  Moreover we denote by $\bs{1}$ and $\bs{0}$
the vectors with all the components equal to $1$ and equal to $0$,
respectively, and by \(\{\bs{e_1}, \ldots, \bs{e_k}\}\) the canonical
base of $\mathbb{R}^k$.
\\

To formally work with the RP urn model presented in the introduction, we add
here some notations. As in \eqref{eq-dynamics-intro}, the urn
initially contains a constant number of $b_{0\,i}$ distinct balls of
color $i$, with $i=1,\dots,k$, together with a constant number
$B_{0\,i}$ balls of the same color $i$.  We set
$\bs{b_0}=(b_{0\,1},\dots,b_{0\,k})^{\top}$ and
$\bs{B_0}=(B_{0\,1},\dots,B_{0\,k})^{\top}$. In all the sequel (unless
otherwise specified) we assume $|\bs{b_0}|>0$ and
$b_{0\,i}+B_{0\,i}>0$ for each $i=1,\dots, k$.  Consistently with
\eqref{def-p_0-intro}, we set $\bs{p_0} =
\frac{\bs{b_0}}{|\bs{b_0}|}$.  At each discrete time $(n+1)\geq 1$, a
ball is drawn at random from the urn, obtaining the random vector
$\bs{\xi_{n+1}} = (\xi_{n+1\,1}, \ldots, \xi_{n+1\,k})^\top$ defined
as
\begin{equation*}
\xi_{n+1\,i} = 
\begin{cases}
1  &  \text{when the extracted ball at time $n+1$ is of color $i$}
\\
0  & \text{otherwise},
\end{cases}
\end{equation*}
and the number of balls in the urn is so updated:
\begin{equation}\label{eq:reinf1:K}
\bs{N_{n+1}}=\bs{b_0}+\bs{B_{n+1}}\qquad\text{with}
\qquad
\bs{B_{n+1}} = \beta \bs{B_n} + \alpha \bs{\xi_{n+1}}\,,
\end{equation}
which gives (since $|\bs{\xi_{n+1}}|=1$)
\begin{equation}\label{eq:reinf1:K-bis}
|\bs{B_{n+1}}|= \beta |\bs{B_n}| + \alpha.  
\end{equation}
Therefore, setting $r^*_{n} = |\bs{N_n}|= |\bs{b_0}|+|\bs{B_n}|$, we
get
\begin{equation}\label{dinamica-rnstar}
r_{n+1}^*=r_n^*+(\beta-1)|\bs{B_n}|+\alpha.
\end{equation}
Moreover, setting $\mathcal{F}_0$ equal to the trivial $\sigma$-field
and $\mathcal{F}_n=\sigma(\bs{\xi_1},\dots,\bs{\xi_n})$ for $n\geq 1$,
the conditional probabilities $\bs{\psi_{n}}= (\psi_{n\,1}, \ldots,
\psi_{n\,k})^\top$ of the extraction process, also called {\em predictive
means}, are
\begin{equation}\label{eq:extract1a:K-vettoriale}
\bs{\psi_{n}}=E[ \bs{\xi_{n+1}}| \mathcal{F}_n ]= 
\frac{\bs{N_n}}{|\bs{N_n}|} =
\frac{\bs{b_0}+\bs{B_n}}{r_n^*}\qquad\mbox{for } n\geq 0.
\end{equation}
 It is obvious that we have $|\bs{\psi_{n}}|=1$. Finally, for the
 sequel, we set $\bs{\overline{\xi}_{N}}=\sum_{n=1}^N\bs{\xi_{n}}/N$.
 \\

\indent We note that, by means of \eqref{eq:extract1a:K-vettoriale},
together with \eqref{eq:reinf1:K} and \eqref{dinamica-rnstar}, we have
\begin{equation}\label{dinamica-psin}
\bs{\psi_{n}}-\bs{\psi_{n-1}}=
-\frac{(1-\beta)|\bs{b_0}|}{r_{n}^*}\big(\bs{\psi_{n-1}}-\bs{p_0}\big)
+
\frac{\alpha}{r_{n}^*}\big(\bs{\xi_{n}}-\bs{\psi_{n-1}}\big).
\end{equation}
As said before, the RP urn for $\beta=1$ coincides with the well-known
standard Eggenberger-P\'olya urn and so we will exclude it from the
following analyses. When $\beta\neq 1$, since the first term in the
right hand of the above relation, the RP urn does not belong to the
class of Reinforced Stochastic Processes (RSPs) studied in
\cite{AlCrGh, ale-cri-ghi-WEIGHT-MEAN, ale-cri-ghi-MEAN,
  cri-dai-lou-min, CrDPMi, dai-lou-min}. Generally speaking, by
reinforcement in a stochastic dynamics we mean any mechanism for which
the probability that a given event occurs, i.e.~the predictive mean,
has an increasing dependence on the number of times that the same
event occurred in the past. This ``reinforcement mechanism'', also
known as ``preferential attachment rule'' or ``Rich get richer rule''
or ``Matthew effect'', is a key feature governing the dynamics of many
biological, economic and social systems (see,
e.g. \cite{pemantle2007}). The RSPs are characterized by a ``strict''
reinforcement mechanism such that, at each time-step, we have a strict
positive increment of the predictive mean associated to the extracted
color. As an immediate consequence, the ``general'' reinforcement
mechanism is satisfied, that is the predictive mean for a given color
has an increasing dependence on the number of past extractions of that
color. When $\beta\neq 1$, the RP urn model does not satisfy the
``strict'' reinforcement mechanism, because the first term in the
right side of \eqref{dinamica-psin} is positive or negative according
to the sign of $(1-\beta)$ and of
$(\bs{\psi_{n-1}}-\bs{p_0})$. However, when $\alpha,\,\beta>0$, it
satisfies the general reinforcement mechanism. Indeed, by
\eqref{eq:reinf1:K}, \eqref{eq:reinf1:K-bis}, \eqref{dinamica-rnstar}
and \eqref{eq:extract1a:K-vettoriale}, using $\sum_{m=0}^{n-1}
x^m=(1-x^{n})/(1-x)$, we have
\begin{equation}\label{eq-rstar_n}
r_n^*=|\bs{b_0}|+\frac{\alpha }{1-\beta}+
\beta^n\left(|\bs{B_0}|- \frac{\alpha }{1-\beta}\right)
\end{equation}
and 
\begin{equation}\label{eq-psi_n}
\bs{\psi_n} = 
\frac{ 
\bs{b_0}+ \beta^n\bs{B_0} + \alpha \sum_{m=1}^n \beta^{n-m} \bs{\xi_{m}}}
{
|\bs{b_0}|+\frac{\alpha }{1-\beta}+
\beta^n\big(|\bs{B_0}|- \frac{\alpha }{1-\beta}\big)}=
\frac{ 
\beta^{-n}\bs{b_0}+ \bs{B_0} + \alpha \sum_{m=1}^n \beta^{-m} \bs{\xi_{m}}}
{
\beta^{-n}\left(|\bs{b_0}|+\frac{\alpha }{1-\beta}\right)+
|\bs{B_0}|- \frac{\alpha }{1-\beta}}.
\end{equation}
In particular, for $\beta>1$, the dependence of $\bs{\psi_n}$ on
$\bs{\xi_m}$ exponentially decreases with $m$, because of the factor
$\beta^{-m}$. For $\beta<1$ we have the opposite behaviour, that is
the dependence of $\bs{\psi_n}$ on $\bs{\xi_m}$ exponentially
increases with $m$, because of the factor $\beta^{n-m}$, and so the
main contribution is given by the most recent extractions. We refer to
this phenomenon as ``local'' reinforcement. The case $\beta=0$ is an
extreme case, for which $\bs{\psi_n}$ depends only on the last
extraction $\bs{\xi_n}$: at each time-step $n+1\geq 2$ we extract a
ball from an urn with $b_{0\,i}+\alpha$ balls of color $i$, if $i$ is
the color extracted at time $n$, and $b_{0\,j}$ balls for each color
$j\neq i$. This particular case corresponds to a version of the
so-called ``memory-1 senile reinforced random walk'' on a star-shaped
graph introduced in \cite{holmes}, but the study done in that paper
differs from ours.  Finally, we observe that Equation
\eqref{dinamica-psin} recalls the dynamics of a RSP with a ``forcing
input'' (see \cite{AlCrGh, cri-dai-lou-min, sah}), but the main
difference relies on the fact that, for the RP urn, the sequence
$(r_n^*)$ is such that $r_n^*\to r^*>0$, and so $\sum_n
1/r_n^*=+\infty$ and $\sum_n 1/(r_n^*)^2=+\infty$, when $\beta\in
[0,1)$, and such that $\sum_n 1/r_n^*<+\infty$ (and $\sum_n
  1/(r_n^*)^2<+\infty$) when $\beta>1$. These facts lead to a
  different asymptotic behavior of $(\bs{\psi_n})$. Specifically, for
  the RP urn with $\beta\in [0,1)$, the predictive mean $\bs{\psi_n}$
    randomly fluctuates and does not converge almost surely; while,
    for the RP urn with $\beta>1$, the sequence $(\bs{\psi_n})$ almost
    surely converges to a random variable $\bs{\psi_\infty}$ and
    $|\bs{\psi_n}-\bs{\psi_\infty}|=O(\beta^{-n})$. Instead, for the
    RSP with a ``forcing input'', the almost sure convergence of
    $\bs{\psi_n}$ toward the forcing input (which is a constant) holds
    true and the corresponding rate of convergence depends on a model
    parameter $\gamma\in (1/2,1]$ and equals $n^{-\gamma/2}$.


\section{Properties of the ``Rescaled'' P\'olya urn model}
\label{study-urn-model}

We study separately the three cases $\beta=0$, $\beta\in (0,1)$ and
$\beta>1$.

\subsection{The case $\beta=0$}
\label{sec-beta0}

In this case, by \eqref{eq:reinf1:K}, \eqref{eq:reinf1:K-bis} and
\eqref{dinamica-rnstar}, we have for
all $i = 1, \ldots, k$
\begin{equation}\label{eq:extractBeta0:K}
\psi_{0\,i}=\frac{b_{0\,i}+B_{0\,i}}{|\bs{b_0}|+ |\bs{B_0}| }\qquad\mbox{and}\qquad
\psi_{n\,i}= \frac{b_{0\,i}+ \alpha \xi_{n\,i}}{|\bs{b_0}|+ \alpha }
\qquad\mbox{for } n\geq 1.
\end{equation}
We now focus on $\bs{\psi_n}$ for $n\geq 1$. The process
$(\bs{\psi_{n}})_{n\geq 1}$ is a $k$-dimensional Markov chain with a
finite state space $ S=\{\bs{s_1},\ldots, \bs{s_k}\} $, where
$$
\bs{s_i} = \frac{1}{|\bs{b_0}|+ \alpha } 
\big( b_{0\,1} , \ldots, b_{0\,i}+\alpha , \ldots, b_{0\,k}\big)^\top, 
\qquad \mbox{for } i=1,\ldots, k\,,
$$
and transition probability matrix
\[
P = \frac{1}{|\bs{b_0}|+ \alpha } 
(\bs{1}_k \, \bs{b_0}^\top  + \alpha \mathrm{Id}_k )= 
\frac{|\bs{b_0}|}{|\bs{b_0}|+ \alpha } 
(\bs{1}_k \, \bs{p_0}^\top  + \tfrac{\alpha}{|\bs{b_0}|} \mathrm{Id}_k )\,, 
\]
which is irreducible and aperiodic. Now, since $\bs{1}_k \,
\bs{p_0}^\top $ is idempotent and commutes with the identity, then we
have
\begin{equation}\label{eq:P^n:K}
\begin{aligned}
P^n  & = \Big(\frac{|\bs{b_0}|}{|\bs{b_0}|+ \alpha }\Big)^n  
\bigg( \Big(\sum_{j=0}^{n-1} \binom{n}{j} 
(\tfrac{\alpha}{|\bs{b_0}|})^j \Big)\bs{1}_k \, \bs{p_0}^\top  + 
(\tfrac{\alpha}{|\bs{b_0}|})^n \mathrm{Id}_k \bigg)
\\
& = \Big(\frac{|\bs{b_0}|}{|\bs{b_0}|+ \alpha }\Big)^n  
\bigg( 
\Big( (1+\tfrac{\alpha}{|\bs{b_0}|})^n -(\tfrac{\alpha}{|\bs{b_0}|})^n 
\Big)\bs{1}_k \, \bs{p_0}^\top + 
(\tfrac{\alpha}{|\bs{b_0}|})^n \mathrm{Id}_k \bigg)
\\
& = \bs{1}_k \, \bs{p_0}^\top  +
\Big(\frac{\alpha}{|\bs{b_0}|+ \alpha }\Big)^n  
\Big(\mathrm{Id}_k- \bs{1}_k \, \bs{p_0}^\top \Big) 
\\
& = \bs{1}_k \, \bs{p_0}^\top  +
\gamma^n  
\Big(\mathrm{Id}_k- \bs{1}_k \, \bs{p_0}^\top \Big) 
\,,
\end{aligned}
\end{equation}
where $\gamma$ is the constant given in \eqref{eq:def_gamma}, that
becomes equal to $\frac{\alpha}{|\bs{b_0}| + {\alpha}}$ for $\beta=0$.
We note that $\gamma<1$ (since $|\bs{b_0}|>0$ by assumption) and so
$P^n \to \bs{1}_k \, \bs{p_0}^\top $, and the unique invariant
probability measure on $S$ is hence $ \bs{\pi} = \bs{p_0}$.

\begin{theorem}\label{thm:CLT_Beta0:K}
We have $\bs{\overline{\xi}_N}\stackrel{a.s.}\longrightarrow \bs{p_0}$
and
\[
\sqrt{N}\left(\bs{\overline{\xi}_N}-\bs{p_0}\right)=
\frac{\sum_{n=1}^N (\bs{\xi_n} - \bs{p_0})}{\sqrt{N}}  
\mathop{\longrightarrow}^{d}_{N\to\infty} 
\mathcal{N} (0,\Sigma^2),
\]
where
\begin{equation}\label{eq:sigma2pi_psi_beta0:K}
\Sigma^2 
= \lambda 
\Big(\mathrm{diag}(\bs{p_0})- \bs{p_0} \bs{p_0}^\top \Big), 
\end{equation}
with $\lambda$ defined in \eqref{def-lambda} (taking $\beta=0$).
\end{theorem}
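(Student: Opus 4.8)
The plan is to exploit that, when $\beta=0$, the draw sequence is itself a finite-state ergodic Markov chain, so that $\bs{\overline{\xi}_N}$ is a bounded additive functional of a well-behaved chain; the explicit form of $P^n$ already displayed in \eqref{eq:P^n:K} will then make the asymptotic covariance completely explicit.

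First I would record the Markov structure. By \eqref{eq:extractBeta0:K}, for $n\ge 1$ the vector $\bs{\psi_n}$ is a deterministic bijective image of $\bs{\xi_n}$ (namely $\bs{\psi_n}=\bs{s_i}\Leftrightarrow\bs{\xi_n}=\bs{e_i}$), and $E[\bs{\xi_{n+1}}\mid\mathcal F_n]=\bs{\psi_n}$ depends on $\mathcal F_n$ only through $\bs{\xi_n}$. Hence $(\bs{\xi_n})_{n\ge1}$ is a Markov chain on $\{\bs{e_1},\dots,\bs{e_k}\}$ with transition matrix $P$ and initial law $\bs{\psi_0}$, and (as noted after \eqref{eq:P^n:K}) $P$ is irreducible and aperiodic with unique invariant law $\bs{\pi}=\bs{p_0}$. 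The a.s.\ statement $\bs{\overline{\xi}_N}\stackrel{a.s.}{\longrightarrow}\bs{p_0}$ is then just the ergodic theorem, since $E_{\bs{\pi}}[\bs{\xi_0}]=\sum_i p_{0\,i}\bs{e_i}=\bs{p_0}$.

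For the CLT I would invoke the central limit theorem for additive functionals of a finite ergodic Markov chain (valid from an arbitrary initial law), obtaining $\sqrt N(\bs{\overline{\xi}_N}-\bs{p_0})\stackrel{d}{\longrightarrow}\mathcal N(0,\Sigma^2)$ with $\Sigma^2=\mathrm{Var}_{\bs{\pi}}(\bs{\xi_0})+2\sum_{n\ge1}\mathrm{Cov}_{\bs{\pi}}(\bs{\xi_0},\bs{\xi_n})$. Since $\bs{\xi_0}$ is an indicator vector, $\bs{\xi_0}\bs{\xi_0}^\top=\mathrm{diag}(\bs{\xi_0})$ and so $\mathrm{Var}_{\bs{\pi}}(\bs{\xi_0})=\mathrm{diag}(\bs{p_0})-\bs{p_0}\bs{p_0}^\top$; moreover $E_{\bs{\pi}}[\bs{\xi_0}\bs{\xi_n}^\top]=\mathrm{diag}(\bs{p_0})\,P^n$, so substituting \eqref{eq:P^n:K} and using $\mathrm{diag}(\bs{p_0})\bs{1}_k=\bs{p_0}$ gives $\mathrm{Cov}_{\bs{\pi}}(\bs{\xi_0},\bs{\xi_n})=\gamma^n\big(\mathrm{diag}(\bs{p_0})-\bs{p_0}\bs{p_0}^\top\big)$, with $\gamma=\frac{\alpha}{|\bs{b_0}|+\alpha}\in(0,1)$. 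Summing the geometric series yields $\Sigma^2=\big(1+\tfrac{2\gamma}{1-\gamma}\big)\big(\mathrm{diag}(\bs{p_0})-\bs{p_0}\bs{p_0}^\top\big)$; and since specializing \eqref{def-lambda} to $\beta=0$ gives $(\gamma-\beta)^2+(1-\gamma^2)=1$ and hence $\lambda=1+\tfrac{2\gamma}{1-\gamma}$, this is exactly \eqref{eq:sigma2pi_psi_beta0:K}.

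I do not expect a genuine obstacle here: the finiteness of the state space trivializes both the ergodic theorem and the Markov-chain CLT, the only mild points being that the chain starts outside $S$ (irrelevant for the asymptotics) and the linear algebra identifying $\Sigma^2$ with $\lambda\big(\mathrm{diag}(\bs{p_0})-\bs{p_0}\bs{p_0}^\top\big)$. If one would rather not quote the Markov-chain CLT, an equivalent self-contained route is to write $\bs{\xi_n}-\bs{p_0}=(\bs{\xi_n}-\bs{\psi_{n-1}})+(\bs{\psi_{n-1}}-\bs{p_0})$, observe that $\bs{\xi_n}-\bs{\psi_{n-1}}$ is a bounded martingale difference and that $\bs{\psi_{n-1}}-\bs{p_0}=\gamma(\bs{\xi_{n-1}}-\bs{p_0})$ is immediate from \eqref{eq:extractBeta0:K}, solve $(1-\gamma)\sum_{n=1}^N(\bs{\xi_n}-\bs{p_0})=\sum_{n=1}^N(\bs{\xi_n}-\bs{\psi_{n-1}})+O(1)$, and apply a multidimensional martingale CLT to $\sum_n(\bs{\xi_n}-\bs{\psi_{n-1}})$, whose predictable covariance $\frac1N\sum_n\big(\mathrm{diag}(\bs{\psi_{n-1}})-\bs{\psi_{n-1}}\bs{\psi_{n-1}}^\top\big)$ converges a.s.\ to $(1-\gamma^2)\big(\mathrm{diag}(\bs{p_0})-\bs{p_0}\bs{p_0}^\top\big)$ by the first step; the factor $\frac{1-\gamma^2}{(1-\gamma)^2}=\lambda$ then reproduces \eqref{eq:sigma2pi_psi_beta0:K}. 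This case is also a particular instance of the general CLT of Section~\ref{sec-clt-markov}, the ``linearity'' condition being satisfied since $E[\bs{\psi_n}\mid\mathcal F_{n-1}]=\gamma\bs{\psi_{n-1}}+(1-\gamma)\bs{p_0}$ is affine in $\bs{\psi_{n-1}}$.
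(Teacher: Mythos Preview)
Your proposal is correct and follows essentially the same route as the paper: identify $(\bs{\xi_n})_{n\ge1}$ (equivalently $(\bs{\psi_n})_{n\ge1}$) as a finite irreducible aperiodic Markov chain with stationary law $\bs{p_0}$, obtain the a.s.\ convergence from the ergodic theorem, apply the Markov-chain CLT, and compute $\Sigma^2$ via $E_{\bs{\pi}}[\bs{\xi_0}\bs{\xi_n}^\top]=\mathrm{diag}(\bs{p_0})P^n$ using the explicit form \eqref{eq:P^n:K}. The paper does the same, the only cosmetic difference being that it passes through the Cram\'er--Wold device (scalar $g(\bs{x})=\bs{c}^\top(\bs{x}-\bs{p_0})$) rather than stating the multidimensional CLT directly; your explicit check that $\lambda=1+\tfrac{2\gamma}{1-\gamma}$ when $\beta=0$ and your alternative martingale derivation are welcome additions not spelled out in the paper.
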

\begin{proof}
We observe that, by \eqref{eq:extractBeta0:K}, we have for each $n\geq 0$
\[
\{\xi_{n+1\,i}=1\} = \{\bs{\psi_{n+1}}=\bs{s_i}\}.
\] 
Therefore, the strong law of large numbers for Markov chains
immediately yields
\[
\bs{\overline{\xi}_N} = 
\frac{1}{N} \sum_{n=1}^N \Big( \ind{\{\bs{\psi_n} = \bs{s_1}\}}, \ldots, 
\ind{\{\bs{\psi_n} = \bs{s_k}\}} \Big)^\top
\stackrel{a.s.}\longrightarrow \bs{p_0}.
\]
Take a vector $\bs{c} = (c_1,\ldots,c_k)^T$ and define $g(\bs{x}) =
\bs{c}^T (\bs{x}-\bs{p_0})$. Recall that
$g(\bs{\xi}_n)=g\Big(\ind{\{\bs{\psi_n} = \bs{s_1}\}}, \ldots,
\ind{\{\bs{\psi_n} = \bs{s_k}\}} \Big)$ and apply the central limit
theorem for uniformly ergodic Markov chains (see, for instance, 
\cite[Theorem~17.0.1]{Meyn09}): 
the sequence $( \frac{\sum_{n=1}^N g (\bs{\xi_n})}{\sqrt{N}})$
converges in distribution to the Gaussian distribution $\mathcal{N}
(0,\sigma^2_{\bs{c}})$, with 
\begin{equation*}
\begin{split}
\sigma^2_{\bs{c}} &= \mathrm{Var}[g (\bs{{\xi}^{(\pi)}_0}) ] + 
2 \sum_{n\geq 1} 
\mathrm{Cov}(g (\bs{{\xi}^{(\pi)}_0}) , 
g (\bs{{\xi}^{(\pi)}_n}))
\\ 
&= \bs{c}^T \Big( \mathrm{Var}[\bs{{\xi}^{(\pi)}_0}] + 
2 \sum_{n\geq 1} \mathrm{Cov}(\bs{{\xi}^{(\pi)}_0} , 
\bs{{\xi}^{(\pi)}_n} ) \Big) \bs{c}\,,
\end{split}
\end{equation*}
where $\bs{{\xi}^{(\pi)}_n}=\Big(\ind{\{\bs{{\psi}^{(\pi)}_n}
  =\bs{s_1}\}}, \ldots, \ind{\{\bs{{\psi}^{(\pi)}_n} = \bs{s_k}\}}
\Big)$ and $\big(\bs{{\psi}^{(\pi)}_n}\big)_{n\geq 0}$ is a Markov
chain with transition matrix $P$ and initial distribution $\bs{\pi}$,
that is $\bs{p_0}$. Now, by definition, $\bs{{\xi}^{(\pi)}_0}
\bs{{\xi}^{(\pi)}_0}^T = \mathrm{diag}(\bs{{\xi}^{(\pi)}_0})$, and
hence $\mathrm{Var}[\bs{{\xi}^{(\pi)}_0} ] = \mathrm{diag}(\bs{p_0}) -
\bs{p_0}\bs{p_0}^T$.  Moreover, by means of \eqref{eq:P^n:K},
\[
E[\bs{{\xi}^{(\pi)}_0} \bs{{\xi}^{(\pi)}_n}^\top] =
\mathrm{diag}(\bs{p_0}) P^n = \bs{p_0} \bs{p_0}^\top  +
\gamma^n  
\Big(\mathrm{diag}(\bs{p_0})- \bs{p_0} \bs{p_0}^\top \Big) .
\]
Hence, since $\gamma < 1$, we have $\sum_{n\geq
  1}\gamma^n=\gamma/(1-\gamma)$ and so
\begin{align*}
\mathrm{Var}[\bs{{\xi}^{(\pi)}_0} ] + 
2 \sum_{n\geq 1} \mathrm{Cov}(  \bs{{\xi}^{(\pi)}_0} , 
\bs{{\xi}^{(\pi)}_n} ) & = 
\Big(\mathrm{diag}(\bs{p_0})- \bs{p_0} \bs{p_0}^\top \Big) 
\Big(1+ \frac{2\gamma}{1-\gamma } \Big).
\end{align*}
By the Cram\'er-Wold device, the theorem is proved with $\Sigma^2$
given in \eqref{eq:sigma2pi_psi_beta0:K}.
\end{proof}

\begin{remark}\label{rem:b0_beta=0}
Note that in Theorem~\ref{thm:CLT_Beta0:K} we do not assume
$b_{0\,i}>0$ for all $i$, but only $|\bs{b_0}|>0$ (as said in
Sec.~\ref{urn-model}).  A different behavior is observed when $
\bs{b_0} = \bs{0}$. In this case, \eqref{eq:extractBeta0:K} gives $
\bs{\psi_n} = \bs{\xi_n} $ for $n\geq 1$. Since
$\psi_{n\,i}=P(\xi_{n\,i}=1|\mathcal{F}_n)$, the above equality
implies recursively $\bs{\psi_{n}} = \bs{\xi_{n}} = \bs{\xi_{1}}$ for
each $n\geq 1$.  In other words, the process of extractions
$\bs{\xi}=(\bs{\xi_n})_{n\geq 1}$ is constant, with $P(\xi_{1\,i} = 1)
= \psi_{0\,i} = {B_{0\,i}} / |\bs{B_0}|$.
\end{remark}


\subsection{The case $\beta \in (0,1)$}
\label{sec-beta-minore1}

In this case, we have $\lim_n\beta^n=0$ and $\sum_{n\geq
  1}\beta^n=\beta/(1-\beta)$. Therefore, setting $r
=\tfrac{\alpha}{1-\beta}$ and $r^*=|\bs{b_0}|+r$, we have by
\eqref{eq:reinf1:K-bis} and \eqref{dinamica-rnstar}
\begin{equation}\label{eq:numBallsTot1:k}
r_n^*=|\bs{b_0}|+|\bs{B_{n}}| 
= 
|\bs{b_0}|+r + \beta^n ( |\bs{B_{0}}| - r) 
\longrightarrow r^*>0,
\end{equation}
and so we have that the denominator $r^*_n$ in $\bs{\psi_n}$ (see
Eq.~\eqref{eq:extract1a:K-vettoriale}) goes exponentially fast to the
limit $r^*$.  Moreover, recalling the definition of the constant
$\gamma$ in \eqref{eq:def_gamma}, we have $\beta < \gamma < 1$
(remember that $|\bs{b_0}|>0$ by assumption) and
\begin{equation}\label{relazioni-gamma}
\gamma-\beta=\frac{\alpha}{r^*}\qquad\mbox{and}\qquad
1-\gamma=\frac{(1-\beta)|\bs{b_0}|}{r^*}.
\end{equation}
Therefore, by \eqref{eq:numBallsTot1:k}, the terms
$\frac{(1-\beta)|\bs{b_0}|}{r_{n}^*}$ and $\frac{\alpha}{r_{n}^*}$ in
the dynamics \eqref{dinamica-psin} converge exponentially fast to
$(1-\gamma)$ and $(\gamma-\beta)$, respectively.  Furthermore, as we
will see, the fact that the constant $\gamma$ is strictly smaller than 
$1$ will play a central r\^ole, because it will imply the existence of
a contraction of the process $\bs{\psi}=(\bs{\psi_{n}})_n$ in a proper
metric space with (sharp) constant $\gamma$.  Consequently, it is not
a surprise that this constant enters naturally in the parameters of
the asymptotic distribution, given in the following result:

\begin{theorem}\label{thm:CLT_beta<1}
We have $\bs{\overline{\xi}_N} \mathop{\longrightarrow}\limits^{a.s.}
\bs{p_0}$ and
\[
{\sqrt{N}} (\bs{\overline{\xi}_N} - \bs{p_0}) 
=
\frac{\sum_{n=1}^N (\bs{\xi_n} - \bs{p_0})}{\sqrt{N}}
\mathop{\longrightarrow}^{d}
\mathcal{N} (\bs{0},\Sigma^2),
\]
where
\[
\Sigma^2 = \lambda
\Big( \mathrm{diag} (\bs{p_0}) - \bs{p_0}\bs{p_0}^T\Big),
\]
with $\lambda$ defined in \eqref{def-lambda} as a function of $\beta$
and $\gamma$.
\end{theorem}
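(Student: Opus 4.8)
The plan is to realize $(\bs{\psi_n})_n$ as a Markov chain on the compact state space $S$ (the simplex, or more precisely the closure of the set of reachable predictive means) and to apply the general CLT for compact Markov chains satisfying the ``linearity'' condition announced in the Appendix (Section \ref{sec-clt-markov}). The dynamics \eqref{dinamica-psin} is not exactly time-homogeneous because the coefficients $\tfrac{(1-\beta)|\bs{b_0}|}{r_n^*}$ and $\tfrac{\alpha}{r_n^*}$ depend on $n$ through $r_n^*$; however, by \eqref{eq:numBallsTot1:k} these converge to $(1-\gamma)$ and $(\gamma-\beta)$ exponentially fast. So the first key step is to introduce the \emph{limiting} homogeneous chain $(\bs{\widetilde\psi_n})_n$ governed by
\[
\bs{\widetilde\psi_{n}}-\bs{\widetilde\psi_{n-1}}
= -(1-\gamma)\big(\bs{\widetilde\psi_{n-1}}-\bs{p_0}\big)
+(\gamma-\beta)\big(\bs{\widetilde\xi_{n}}-\bs{\widetilde\psi_{n-1}}\big),
\]
and to couple it with the true chain. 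This coupling — the content of Section \ref{sec-coupling} — is what lets one transfer the SLLN and the CLT from $(\bs{\widetilde\psi_n})$ to $(\bs{\psi_n})$; the exponential decay of $|r_n^*-r^*|$ should make the discrepancy between the two chains summable, hence negligible for both the a.s.\ limit and the $\sqrt N$-scaled fluctuations.

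Next I would verify the hypotheses of the compact-Markov-chain CLT for the limiting chain. Taking conditional expectations in the display above gives
\[
E[\bs{\widetilde\psi_n}\mid\mathcal F_{n-1}]
=\gamma\,\bs{\widetilde\psi_{n-1}}+(1-\gamma)\bs{p_0},
\]
so the ``drift'' is affine with contraction constant $\gamma\in(0,1)$: this is precisely the ``linearity'' condition, and it yields a unique invariant law $\bs\pi$ with mean $\bs{p_0}$ and geometric ergodicity. Then $\bs{\overline{\xi}_N}\to\bs{p_0}$ a.s.\ follows from the ergodic theorem (note $E_{\bs\pi}[\bs{\widetilde\xi_n}]=\bs{p_0}$ since $\bs{\widetilde\psi}$ has mean $\bs{p_0}$ under stationarity), and the martingale/ergodic CLT gives asymptotic normality with covariance
\[
\Sigma^2=\mathrm{Var}_{\bs\pi}[\bs{\widetilde\xi_0}]
+2\sum_{n\ge1}\mathrm{Cov}_{\bs\pi}(\bs{\widetilde\xi_0},\bs{\widetilde\xi_n}).
\]
It remains to compute this series. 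Using $\bs{\widetilde\xi_0}\bs{\widetilde\xi_0}^\top=\mathrm{diag}(\bs{\widetilde\xi_0})$ one gets $\mathrm{Var}_{\bs\pi}[\bs{\widetilde\xi_0}]=\mathrm{diag}(\bs{p_0})-\bs{p_0}\bs{p_0}^\top$; for the cross terms one writes $E_{\bs\pi}[\bs{\widetilde\xi_0}\bs{\widetilde\xi_n}^\top] =E_{\bs\pi}\big[\bs{\widetilde\xi_0}\,E[\bs{\widetilde\xi_n}\mid\mathcal F_{n-1}]^\top\big]$ and iterates the affine recursion, which should produce a factor $\gamma^{\,?}$ times $\big(\mathrm{diag}(\bs{p_0})-\bs{p_0}\bs{p_0}^\top\big)$ — the bookkeeping of whether the exponent is $n$ or $n-1$ (and how the ``instantaneous'' jump $\gamma-\beta$ interacts with it) is what produces the non-trivial shape of $\lambda$ in \eqref{def-lambda}, with the $(\gamma-\beta)^2+(1-\gamma^2)$ denominator and the $(1-\beta)^2$ prefactor coming from \eqref{relazioni-gamma}. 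Summing the geometric series $\sum_{n\ge1}\gamma^n=\gamma/(1-\gamma)$ and simplifying gives $\Sigma^2=\lambda\big(\mathrm{diag}(\bs{p_0})-\bs{p_0}\bs{p_0}^\top\big)$. Finally the Cramér–Wold device upgrades the one-dimensional CLT (applied to $\bs c^\top(\bs{\overline\xi_N}-\bs{p_0})$) to the stated multivariate statement.

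The main obstacle I expect is \emph{not} the covariance computation (routine once the stationary chain is in hand) but the \textbf{coupling step}: one must show rigorously that replacing the time-inhomogeneous coefficients by their limits perturbs neither the a.s.\ limit nor the $O(\sqrt N)$ fluctuation scale. Concretely, the difference $\bs{\psi_n}-\bs{\widetilde\psi_n}$ obeys a perturbed affine recursion whose forcing term is $O(\beta^n)$-small but whose ``noise'' terms $\bs{\xi_n}$ versus $\bs{\widetilde\xi_n}$ need a joint construction (shared uniform randomness) so that the two extraction sequences agree eventually, or at least differ on a summable set of times. Controlling this — and in particular checking that the stationary chain $(\bs{\widetilde\psi_n})$ used for the limiting covariance is genuinely the right object, i.e.\ that $S$ is compact and the chain is uniformly ergodic so that the CLT series converges absolutely — is where the real work lies, and it is exactly what Sections \ref{sec-clt-markov} and \ref{sec-coupling} are designed to supply.
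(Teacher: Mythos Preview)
Your outline is correct and uses the same two tools the paper does (the coupling of Section~\ref{sec-coupling} and the compact-Markov-chain CLT of Section~\ref{sec-clt-markov}), but there is one simplifying observation you miss and one computational route that differs.

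\emph{The coupling step.} You propose to couple the true inhomogeneous chain with an auxiliary ``limiting'' homogeneous chain and then argue that the discrepancy is summable. The paper does something slicker: it observes from \eqref{eq:numBallsTot1:k} that if one chooses the initial condition so that $|\bs{B_0}|=r=\alpha/(1-\beta)$, then $r_n^*=r^*$ for \emph{every} $n$, and the actual chain $(\bs{\psi_n})_n$ is already exactly time-homogeneous with the dynamics \eqref{eq:evolution_psi:K}. The coupling (Theorem~\ref{th:coupl_processes}) is then used only to show that the asymptotics do not depend on $|\bs{B_0}|$, i.e.\ to reduce to this convenient initial condition. This is the same coupling machinery you invoke, but applied to change the \emph{starting point} rather than to approximate the coefficients; it avoids having to track a separate ``limiting'' process alongside the true one.

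\emph{The covariance computation.} You plan to evaluate $\Sigma^2$ as $\mathrm{Var}_\pi[\bs{\xi_0}]+2\sum_{n\geq 1}\mathrm{Cov}_\pi(\bs{\xi_0},\bs{\xi_n})$ by iterating the affine recursion for $\bs{\psi}$, as in the $\beta=0$ proof. That works, but along the way you will need the stationary covariance $\Sigma^2_\pi=\mathrm{Var}_\pi[\bs{\psi_0}]$, which is not given a priori; one extra identity is required to eliminate it. The paper avoids the series entirely by writing $\bs{\xi_{n+1}}-\bs{p_0}=\bs{f}(\bs{\psi_n},\bs{\psi_{n+1}})$ with $\bs{f}$ linear (equation \eqref{eq:f2:K}) and applying Theorem~\ref{CLT-Markov-linear} directly; this yields $\Sigma^2$ as an explicit quadratic expression in $\Sigma^2_\pi$ (equation \eqref{eq:Sigma2Sigma2pi:K}), and a second linear relation \eqref{eq:diagPoPoTSigma2pi:K} between $\Sigma^2_\pi$ and $\mathrm{diag}(\bs{p_0})-\bs{p_0}\bs{p_0}^\top$ comes from expanding $\mathrm{Var}_\pi[\bs{\xi_1}]$. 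Combining the two gives the constant $\lambda$ in closed form without summing a series. Your route would reach the same destination but with more bookkeeping—precisely the ``$\gamma^{?}$'' uncertainty you flag.
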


\begin{remark}\label{rem:b0_beta<1}
Note that in Theorem~\ref{thm:CLT_Beta0:K} we do not assume
$b_{0\,i}>0$ for all $i$, but only $|\bs{b_0}|>0$ (as said in
Sec.~\ref{urn-model}). Again, a different behavior is observed when $
\bs{b_0} = \bs{0}$.  Indeed, from \eqref{dinamica-psin}, we have
\begin{equation}\label{dyn-0}
\bs{\psi_{n}}-\bs{\psi_{n-1}}=
\frac{\alpha}{r_{n}^*}\big(\bs{\xi_{n}}-\bs{\psi_{n-1}}\big),
\end{equation}
and hence $(\bs{\psi_n})$ is a martingale.  The asymptotic
result given above fails (in fact, we have $\gamma = 1$).  The
martingale property of the bounded process $\bs{\psi}$ implies that
$\bs{\psi_n}$ converges almost surely (and in mean) to a bounded
random variable $\bs{\psi_\infty}$.  In addition, since \( r_{n}^* \to
\alpha/(1-\beta)\), from \eqref{dyn-0}, we obtain that the unique
possible limits $\bs{\psi_{\infty}}$ are those for which $\bs{\xi_{n}}
= \bs{\psi_{\infty}}$ eventually.  Hence $\bs{\psi_\infty}$ takes
values in $\{\bs{e_1},\dots,\bs{e_k}\}$ and, since we have
$E[\bs{\psi_{\infty}}] = E[\bs{\psi_{0}}] = \bs{B_0}/{|\bs{B_0}|}$, we
get $P( \bs{\psi_{\infty}} = \bs{e_i} ) = \frac{{B_{0\,i}}}{
  |\bs{B_0}|}$ for all $i=1,\dots,k$.
\medskip
\end{remark}

We will split the proof of Theorem~\ref{thm:CLT_beta<1} into two main
steps: first, we will prove that the convergence behaviour of
$\bs{\overline{\xi}_N}$ does not depend on the initial constant
$|\bs{B}_0|$ and, then, without loss of generality, we will assume
$|\bs{B}_0|=r$ and we will give the proof of the theorem under this
assumption.


\subsubsection{Independence of the asymptotic properties 
of the empirical mean from $|\bs{B_0}|$}

We use a coupling method to prove that the convergence results stated
in Theorem~\ref{thm:CLT_beta<1} are not affected by the value of the
initial constant $|\bs{B_0}|$.  \\

\indent Set $\bs{\xi^{(1)}_n}=\bs{\xi_{n}}$ and $\bs{\psi^{(1)}} =
\bs{\psi}$, that follows the dynamics \eqref{dinamica-psin}, together
with \eqref{eq:extract1a:K-vettoriale}, starting from a certain
initial point $\bs{\psi_0^{(1)}}=\bs{\psi_0}$. By \eqref{eq:reinf1:K}
and relations \eqref{relazioni-gamma}, we can write
\begin{equation*}
\begin{aligned}
\bs{\psi^{(1)}_{n+1}} 
&= 
\frac{\bs{b_0} + \beta \bs{B_n} + \alpha \bs{\xi^{(1)}_{n+1}} }{r^*_{n+1}} 
= 
\frac{\bs{b_0} + \beta ( r^*_n\bs{\psi^{(1)}_{n}} - \bs{b_0} ) + 
\alpha \bs{\xi^{(1)}_{n+1}} }{r^*_{n+1}} 
\\
&=\beta \bs{\psi^{(1)}_n}\frac{r^*_n}{r^*_{n+1}}
+\frac{(1-\beta)\bs{b_0}}{r^*_{n+1}}+\frac{\alpha}{r^*_{n+1}}\bs{\xi^{(1)}_{n+1}}
\\
&= \beta \bs{\psi^{(1)}_{n}} + (\gamma - \beta) \bs{\xi^{(1)}_{n+1}} 
+ \bs{l_{n+1}^{(1)}}(\bs{\psi^{(1)}_n},\bs{\xi^{(1)}_{n+1}})+
(1-\gamma)\bs{p_0} , 
\end{aligned}
\end{equation*}
where 
$$
\bs{l_{n+1}^{(1)}}(\bs{x},\bs{y})=
\Big(\frac{r^*}{r^*_{n+1}}-1\Big) \big[ (1-\gamma)\bs{p_0}
+ (\gamma-\beta) \bs{y} \big]
+\Big(\frac{r^*_n}{r^*_{n+1}}-1\Big) \beta\bs{x}.
$$ Since, by \eqref{eq:numBallsTot1:k}, we have
$r^*/r_{n+1}^*-1=O(\beta^{n+1})$ and $r_{n}^*/r_{n+1}^* -1 =
O(\beta^{n+1})$, we get $|\bs{l_{n+1}^{(1)}}|=O(\beta^{n+1})$. Now,
take $\bs{\xi^{(2)}}=(\bs{\xi^{(2)}_n})_n$ and $\bs{\psi^{(2)}} =
(\bs{\psi^{(2)}_{n}} )_n$ following the same dynamics given in
\eqref{eq:extract1a:K-vettoriale} and \eqref{dinamica-psin}, but
starting from an initial point with $|\bs{B_0^{(2)}}|=r$. Therefore,
we have
\begin{equation*}
\bs{\psi^{(2)}_{n+1}} 
=\beta\bs{\psi^{(2)}_{n}}+(\gamma - \beta)\bs{\xi^{(2)}_{n+1}}+(1-\gamma)\bs{p_0}.
\end{equation*}
Both dynamics are of the form \eqref{eq:evolution_psi:K_modified} with
$a_0 = \beta$, $a_1 = (\gamma-\beta)$, $\bs{c}= (1-\gamma)\bs{p_0}$,
$c_n^{(1)}=\beta^{n}$ and $c_{n}^{(2)}=0$, and, by
\eqref{eq:extract1a:K-vettoriale}, condition \eqref{eq:cond-prob-g}
holds true. Hence we can apply Theorem~\ref{th:coupl_processes} so
that there exist two stochastic processes
$\bs{\widetilde{\psi}^{(1)}}$ and $\bs{\widetilde{\psi}^{(2)}}$,
following the dynamics \eqref{eq:evolution_psi:K_modified-bis} (with
the same specifications as above), together with
\eqref{eq:cond-prob-g-bis}, starting from the same initial points and
such that \eqref{dis-2} holds true, that is
\[
\begin{aligned}
E\Big[ |\bs{\widetilde{\psi}^{(1)}_{n+1}}-\bs{\widetilde{\psi}^{(2)}_{n+1}} | 
\Big| \bs{\widetilde{\psi}^{(2)}_{0}}, \bs{\widetilde{\psi}^{(1)}_{0}}\Big]
& \leq \gamma^{n+1} 
|\bs{\widetilde{\psi}^{(1)}_{0}}-\bs{\widetilde{\psi}^{(2)}_{0}}|
 + O\Big( \sum_{j=1}^{n+1} \gamma^{n+1-j} \beta^{j}\Big)
\\
& =O\left((n+2) \max( \gamma  , \beta )^{n+1}\right)=
O\big((n+2)\gamma^{n+1}\big) .
\end{aligned}
\]
Since $\gamma<1$, if we subtract
\eqref{eq:evolution_psi:K_modified-bis} with $\ell=2$ by
\eqref{eq:evolution_psi:K_modified-bis} with $\ell=1$, we obtain that
\[
\sum_{n=1}^{+\infty} E \Big[ |\bs{\widetilde{\xi}^{(1)}_{n}} - 
\bs{\widetilde{\xi}^{(2)}_{n}} |
\Big] < +\infty ,
\]
which implies $\sum_{n=0}^{+\infty}|\bs{\widetilde{\xi}^{(1)}_{n}} -
\bs{\widetilde{\xi}^{(2)}_{n}} | <+\infty$ a.s., that is
\begin{equation}\label{eventually=}
\bs{\widetilde{\xi}^{(1)}_{n}} =
\bs{\widetilde{\xi}^{(2)}_{n}}\quad\mbox{ eventually}.
\end{equation}  
Therefore, if we prove some asymptotic results for $\bs{\xi^{(2)}}$,
then they hold true also for $\bs{\widetilde{\xi}^{(2)}}$ (since they
have the same joint distribution), then they hold true also for
$\bs{\widetilde{\xi}^{(1)}}$ (since \eqref{eventually=}), and finally
they hold true also for $\bs{{\xi}^{(1)}}$ (since they have the same
joint distribution). Summing up, without loss of generality, we may
prove Theorem~\ref{thm:CLT_beta<1} under the additional assumption
$|\bs{B_0}| = r$.


\subsubsection{The case $|\bs{B_0}|=r$}
\label{sec:beta<1}
Thanks to what we have observed in the previous subsection, 
we here assume that $|\bs{B_0}|=r=\alpha/(1-\beta)$, that implies 
$|\bs{B_n}|=r$ and 
\begin{equation}\label{num}
r_n^*=r^*=|\bs{b_0}|+r
\end{equation}
for any $n$. Hence, we can
simplify \eqref{eq:extract1a:K-vettoriale} as 
\begin{equation*}
\psi_{n\,i} = P( \xi_{n+1\,i} = 1| \mathcal{F}_n ) = 
\frac{b_{0\,i} + B_{n\,i}}{|\bs{b_0}|+ r}=\frac{b_{0\,i} + B_{n\,i}}{r^*}.
\end{equation*}
The process $\bs{\psi}=(\bs{\psi_{n}})_{n\geq 0}$ is then a
Markov chain with state space
\[
S = 
\left\{\bs{x} \colon
x_i\in 
\Big[\frac{b_{0\,i}}{r^*},\frac{b_{0\,i}+r}{r^*}\Big] ,
 |\bs{x}|= 1\right\},
\]
which, endowed with the distance induced by the norm $|\cdot|$, is a
compact metric space. 
\\

In the sequel, according to the context, since we work with a Markov
chain with state space $S\subset\mathbb{R}^k$, the notation $P$ will
be used for:
\begin{itemize}
\item a kernel $P: S\times \mathcal{B}(S)\to [0,1]$, where
  $\mathcal{B}(S)$ is the Borel $\sigma$-field on $S$ and we will use
  the notation $P(\bs{x},\bs{dy})$ in the integrals;
\item an operator $P:C(S) \to M(S)$, where $C(S)$ and $M(S)$ denote
  the space of the continuous and measurable functions on $S$,
  respectively, defined as
\begin{equation*}
(Pf)(\bs{x}) = \int_S f(\bs{y}) P(\bs{x},\bs{dy}).
\end{equation*} 
In addition, when $f$ is the identity map, that is
$f(\bs{y})=\bs{y}$, we will write $(Pid)(\bs{x})$ or
$(P\bs{y})(\bs{x})$.
\end{itemize}
Moreover, we set $P^0f = f$ and $P^nf = P(P^{n-1}f)$. 
\\

By \eqref{dinamica-psin}, together with \eqref{relazioni-gamma} and
\eqref{num}, the process $\bs{\psi}=(\bs{\psi_n})_n$ follows the
dynamics
\begin{equation}\label{eq:evolution_psi:K}
\bs{\psi_{n+1}} 
= \beta \bs{\psi_{n}} +\bs{b_0}\frac{(1-\beta)}{r^*} + 
\bs{\xi_{n+1}}\frac{\alpha}{r^*}
=\beta \bs{\psi_{n}} +(1-\gamma)\bs{p_0}+ 
(\gamma-\beta)\bs{\xi_{n+1}}. 
\end{equation}
Therefore, given $\bs{z} = (z_1, \ldots, z_k)^T$ and setting
\[
\bs{z}_{(i)} = \Big(z_1, \ldots , z_i+\frac{\alpha}{r^*} , \ldots, z_k\Big)^T
=\Big(z_1, \ldots , z_i+(\gamma-\beta) , \ldots, z_k\Big)^T,
\]
for any $i=1, \ldots, k$, we get 
\begin{equation}\label{eq:operator_P:K}
(Pf)(\bs{x}) = 
E[ f(\bs{\psi_{n+1}}) | \bs{\psi_n}=\bs{x} ] = 
\sum_{i=1}^k x_i f\Big( \big(\beta \bs{x} 
+\bs{p_0}(1-\gamma) \big)_{(i)}\Big). 
\end{equation}
In particular, from the above equality, we get 
\begin{equation}\label{eq-operator-Pid}
(Pid)(\bs{x})-\bs{p_0} = 
E[ \bs{\psi_{n+1}}-\bs{p_0} | \bs{\psi_n}=\bs{x} ] = 
\gamma( \bs{x}-\bs{p_0} ). 
\end{equation}
We now show that $\bs{\psi}$ is an irreducible, aperiodic, compact
Markov chain (see Def.~\ref{def:compact-Markov-process} and
Def.~\ref{def-irreducible}).
\\

\noindent{\bf Check that $\bs{\psi}$ is a compact Markov chain:} By
Lemma~\ref{lem:cMP}, it is sufficient to show that $P$ defined in
\eqref{eq:operator_P:K} is weak Feller (Definition~\ref{weak-Feller})
and that it is a semi-contractive operator on $Lip(S)$
(Definition~\ref{def-star}). From \eqref{eq:operator_P:K}, we have
immediately that the function $Pf$ is continuous whenever $f$ is
continuous and hence $P$ is weak-Feller. In order to prove the
contractive property, we start by observing that the dynamics
\eqref{eq:evolution_psi:K} of $\bs{\psi}$ is of the form
\eqref{eq:evolution_psi:K_modified} with $a_0=\beta$,
$a_1=(\gamma-\beta)$, $\bs{c}=\bs{p_0}(1-\gamma)$ and
$\bs{l_{n}}\equiv\bs{0}$ for each $n$. Moreover, by
\eqref{eq:extract1a:K-vettoriale}, condition \eqref{eq:cond-prob-g}
holds true. Then , let $\bs{\psi^{(1)}}$ and $\bs{\psi^{(2)}}$ be two
stochastic processes following the dynamics
\eqref{eq:evolution_psi:K_modified} with the same specifications as
above, together with \eqref{eq:cond-prob-g}, and starting,
respectively, from the point $\bs{x}$ and $\bs{y}$.  Then, applying
Theorem~\ref{th:coupl_processes}, we get two stochastic processes
$\bs{\widetilde\psi^{(1)}}$ and $\bs{\widetilde\psi^{(2)}}$, evolving
according to \eqref{eq:evolution_psi:K_modified-bis}, together with
\eqref{eq:cond-prob-g-bis}, starting from the same initial points
$\bs{x}$ and $\bs{y}$ and such that
 \begin{equation*}
E\Big[\, | \bs{\widetilde{\psi}_{1}^{(2)}} - \bs{\widetilde{\psi}_{1}^{(1)}}|
\,\big]= 
E\Big[\, | \bs{\widetilde{\psi}_{1}^{(2)}} - \bs{\widetilde{\psi}_{1}^{(1)}} | 
\Big| \bs{\widetilde{\psi}_{0}^{(1)}}= \bs{x} ,\, \bs{\widetilde{\psi}_{0}^{(2)}} 
= \bs{y}  \,\Big] 
\leq \gamma | \bs{x}-\bs{y} |.
\end{equation*}
Therefore, if we take $f \in Lip(S)$ with
\[
|f|_{Lip} = \sup_{\bs{x},\bs{y}\in S,\,\bs{x}\neq \bs{y}} 
\frac{|f(\bs{y})-f(\bs{x})|}{|\bs{y}-\bs{x}|},
\]
we obtain
\begin{align*}
| (Pf)(\bs{y})-(Pf)(\bs{x}) | &=
|
E[f(\bs{\widetilde{\psi}_{1}^{(2)}})|\bs{\widetilde{\psi}_{0}^{(2)}}=\bs{y}] 
- 
E[f(\bs{\widetilde{\psi}_{1}^{(1)}})|\bs{\widetilde{\psi}_{0}^{(1)}}=\bs{x}] 
|
\\
&=
E\big[\, 
|f(\bs{\widetilde{\psi}_{1}^{(2)}}) - f(\bs{\widetilde{\psi}_{1}^{(1)}})|  
\,\big]  
\\ &  \leq 
|f|_{Lip} E\Big[ 
| \bs{\widetilde{\psi}_{1}^{(2)}} - \bs{\widetilde{\psi}_{1}^{(1)}} |
\Big ]  
\leq |f|_{Lip} \ \gamma \ |\bs{x}-\bs{y}|
\end{align*}
and so
\[
|(Pf)|_{Lip} = 
\sup_{\bs{x},\bs{y}\in S,\,\bs{x}\neq \bs{y}} 
\frac{|(Pf)(\bs{y})-(Pf)(\bs{x})|}{|\bs{y}-\bs{x}|} 
\leq \gamma |f|_{Lip},
\]
with $\gamma <1 $, as requested. 
\qed

\medskip

\noindent{\bf Check that $\bs{\psi}$ is irreducible and aperiodic:} We
prove the irreducibility and aperiodicity condition stated in
Def.~\ref{def-irreducible}, using Theorem \ref{th-irreducible}.
Therefore, let us denote by $\pi$ an invariant probability measure for
$P$. Moreover, let $\bs{\psi^{(1)}}$ and $\bs{\psi^{(2)}}$ be two
processes that follows the same dynamics \eqref{eq:evolution_psi:K} of
$\bs{\psi}$, but for the first process, we set the initial
distribution equal to $\pi$, while for the second process, we take any
other initial distribution $\nu$ on $S$.  Again, as above, since
\eqref{eq:evolution_psi:K} is of the form
\eqref{eq:evolution_psi:K_modified} with $a_0=\beta$,
$a_1=(\gamma-\beta)$, $\bs{c}=\bs{p_0}(1-\gamma)$ and
$\bs{l_{n}}\equiv\bs{0}$ for each $n$, and, by
\eqref{eq:extract1a:K-vettoriale}, condition \eqref{eq:cond-prob-g}
holds true, then we can apply Theorem~\ref{th:coupl_processes} and
obtain two stochastic processes $\bs{\widetilde\psi^{(1)}}$ and
$\bs{\widetilde\psi^{(2)}}$, evolving according to
\eqref{eq:evolution_psi:K_modified-bis} (with the same specifications
as above), together with \eqref{eq:cond-prob-g-bis}, starting from the
same initial random variables $\bs{\psi^{(1)}_0}$ (with distribution
$\pi$) and $\bs{\psi^{(2)}_0}$ (with distribution $\nu$) and such that
\[
E\Big[ |\bs{\widetilde{\psi}^{(1)}_{n+1}}-\bs{\widetilde{\psi}^{(2)}_{n+1}} | 
\Big| \bs{\widetilde{\psi}^{(2)}_{0}}, \bs{\widetilde{\psi}^{(1)}_{0}}\Big]
\leq \gamma^{n+1} 
|\bs{\widetilde{\psi}^{(1)}_{0}}-\bs{\widetilde{\psi}^{(2)}_{0}}|.
\]
Hence, since $\gamma<1$, we have
$E[|\bs{\widetilde{\psi}^{(1)}_{n}}-\bs{\widetilde{\psi}^{(2)}_{n}}|]
\longrightarrow 0 $, and, since the distribution of
$\bs{\widetilde{\psi}^{(1)}_{n}}$ is always $\pi$ (by definition of
invariant probability measure), we can conclude that
$\bs{\widetilde\psi^{(2)}_n}$, and so $\bs{\psi^{(2)}_n}$ (because
they have the same distribution), converges in distribution to $\pi$.
\qed \\

\noindent{\bf Proof of the almost sure convergence:} We have already
proven that the Markov chain $\bs{\psi}$ has one invariant probability
measure $\pi$.  Furthermore, from \eqref{eq:evolution_psi:K} we
get
\begin{multline}  \label{eq:f2:K}
\bs{\xi_{n+1}} - \bs{p_0}  
= \frac{1}{\gamma-\beta} 
\big[ \bs{\psi_{n+1}} - \beta \bs{\psi_{n}} - \bs{p_0}(1-\gamma) \big] - 
\bs{p_0}  
\\
= \frac{1}{\gamma-\beta} 
\big[ 
( \bs{\psi_{n+1}}  - \bs{p_0} ) - \beta ( \bs{\psi_{n}}  - \bs{p_0}  ) 
\big].
\end{multline}
Therefore, applying \cite[Corollary~5.3 and Corollary 5.12]{Hairer18},
we obtain
\begin{equation*}
\bs{\overline{\xi}_N}-\bs{p_0}=
\frac{1}{N} \sum_{n=1}^N (\bs{\xi_n} -\bs{p_0})
\stackrel{a.s.}\longrightarrow 
\frac{1-\beta}{\gamma-\beta} 
E[ \bs{{\psi}^{(\pi)}_{n}}  - \bs{p_0} ],
\end{equation*}
where $\big(\bs{{\psi}^{(\pi)}_n}\big)_{n\geq 0}$ is a
Markov chain with transition kernel $P$ and initial distribution
$\pi$. From \eqref{eq-operator-Pid}, we have 
\begin{equation*}
E[ \bs{{\psi}^{(\pi)}_{n}}  - \bs{p_0} ]=
E[ \bs{{\psi}^{(\pi)}_{n+1}}  - \bs{p_0} ]
=E\left[ 
E[ \bs{{\psi}^{(\pi)}_{n+1}}  - \bs{p_0} \,|\, \bs{{\psi}^{(\pi)}_{n}}]
\right]
=
\gamma E[ \bs{{\psi}^{(\pi)}_{n}}  - \bs{p_0} ]
\end{equation*}
and so, since $\gamma<1$, we get $E[ \bs{{\psi}^{(\pi)}_{n}} -
  \bs{p_0} ]=0$. This means that
$\int_S\bs{x}\,\pi(d\bs{x})=\bs{p_0}$ and
$\bs{\overline{\xi}_N}\stackrel{a.s.}\longrightarrow \bs{p_0}$.  \qed
\\

{\bf Proof of the CLT:} We apply Theorem~\ref{CLT-Markov-linear},
taking into account that we have already proven that $\bs{\psi}$ is an
irreducible and aperiodic compact Markov chain.  Since, by
\eqref{eq:f2:K} and what we have already proven before, we have
\[
\bs{\xi_{n+1}} - \bs{p_0}= \bs{f} (\bs{\psi_{n}} , \bs{\psi_{n+1}} )
\quad\text{with}\quad
\bs{f} (\bs{x} , \bs{y} ) = 
\frac{1}{\gamma-\beta} \big[ 
( \bs{y}  - \bs{p_0} ) - 
\beta ( \bs{x}  - \bs{p_0}  ) 
\big] 
\]
and $\bs{p_0}=\int_S\bs{x}\,\pi(d\bs{x})$. Hence $\bs{f}
$ and $P$ form a linear model as defined in
Definition~\ref{def-lin-mod} (see also Remark \ref{rem-lin-mod}).
Indeed, we have $A_{1} = -\frac{\beta}{\gamma-\beta} Id$ and $A_{2} =
\frac{1}{\gamma-\beta} Id$. Moreover, by \eqref{eq-operator-Pid}, we
have $P(id)(\bs{x})-\bs{p_0}=\gamma( \bs{x} -\bs{p_0} )$, which means
$A_{P} = \gamma Id$. Therefore Theorem~\ref{CLT-Markov-linear} holds true 
with
$$
D_0= (1-\gamma)^{-1}Id,\quad 
D_1= -\frac{\gamma(1-\beta)}{(\gamma-\beta)(1-\gamma)}Id,\quad
D_2= \frac{(1-\beta)}{(\gamma-\beta)(1-\gamma)}Id
$$
and so, after some computations, with 
\begin{equation}\label{eq:Sigma2Sigma2pi:K}
\begin{aligned}
\Sigma^2  = 
\frac{(1-\beta)^2(1+\gamma)}{(\gamma-\beta)^2(1-\gamma)}
\Sigma^2_{\pi} = 
\frac{(1-\beta)^2}{(\gamma-\beta)^2}
\Big( 1 + 2
\frac{\gamma}{1-\gamma}
\Big)
\Sigma^2_{\pi} .
\end{aligned}
\end{equation}
In order to conclude, we take a Markov chain
$\big(\bs{{\psi}^{(\pi)}_n}\big)_{n\geq 0}$ with transition kernel
$P$ and initial distribution $\pi$ and we set 
$$
\bs{{\xi}^{(\pi)}_{n+1}}-\bs{p_0}=
f(\bs{{\psi}^{(\pi)}_n},\bs{{\psi}^{(\pi)}_{n+1}})=
A_{2}(\bs{{\psi}^{(\pi)}_{n+1}}-\bs{p_0} ) +
A_{1}(\bs{{\psi}^{(\pi)}_n}-\bs{p_0} ).
$$ Then we observe that, by \eqref{eq:var-cov_psi1} and
\eqref{eq:var-cov_psi0psi1}, we have
\begin{align} \nonumber
\mathrm{diag} (\bs{p_0}) - \bs{p_0}\bs{p_0}^T & =
E\big[(\bs{{\xi}^{(\pi)}_1}-\bs{p_0} )(\bs{{\xi}^{(\pi)}_1}-\bs{p_0} )^\top 
\big]
\\ \nonumber & = 
A_1 \Sigma^2_{\pi} A_1^\top + A_2 \Sigma^2_{\pi} A_2^\top 
+ A_1 \Sigma^2_{\pi} A_P^\top A_2^\top + A_2 A_P \Sigma^2_{\pi} A_1 
\\ &
=
\frac{(\gamma-\beta)^2 + (1-\gamma^2)}{(\gamma-\beta )^2}
\Sigma^2_{\pi}.\label{eq:diagPoPoTSigma2pi:K} 
\end{align}
Finally, it is enough to combine \eqref{eq:Sigma2Sigma2pi:K} and
\eqref{eq:diagPoPoTSigma2pi:K}.
\qed


\subsection{The case $\beta>1$}
\label{sec-beta-maggiore1}

In this case, $\lim_n \beta^{n}=+\infty$ and $\sum_{n\geq
  1}\beta^{-n}= 1/(\beta-1)$. Moreover, by \eqref{eq-rstar_n}, $r_n^*$
increases exponentially to $+\infty$. Hence, the following results
hold true:

\begin{theorem}\label{th-psi}
We have 
$$
\bs{\psi_N}\stackrel{a.s.}\longrightarrow 
\bs{\psi_\infty}= 
\frac{ \bs{B_0} + \alpha \sum_{n=1}^{+\infty} \beta^{-n} \bs{\xi_{n}}}{
|\bs{B_0}| + \frac{\alpha}{\beta-1} }
$$
and 
$$
|\bs{\psi_{N}}-{\bs{\psi_{\infty}}}|=O(\beta^{-N}).
$$ 
Moreover $\bs{\psi_\infty}$ takes values in $\{\bs{x}\in [0,1]^k:\,
|\bs{x}|=1\}$ and, if $B_{0\,i}>0$ for all $i=1,\dots, k$, then
$P\{ \psi_{\infty\, i}\in (0,1)\}=1$ for each $i$.
\end{theorem}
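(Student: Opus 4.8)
The plan is to read everything off the closed form \eqref{eq-psi_n}, which is valid for every $\beta>0$ and in particular for $\beta>1$. I would write $\bs{\psi_n}=\bs{f_n}/g_n$ using the second representation there, with numerator $\bs{f_n}=\beta^{-n}\bs{b_0}+\bs{B_0}+\alpha\sum_{m=1}^{n}\beta^{-m}\bs{\xi_m}$ and denominator $g_n=\beta^{-n}\big(|\bs{b_0}|+\tfrac{\alpha}{1-\beta}\big)+|\bs{B_0}|-\tfrac{\alpha}{1-\beta}$. Since $\beta>1$ we have $\beta^{-n}\to0$, while the series $\sum_{m\ge1}\beta^{-m}\bs{\xi_m}$ converges absolutely because $|\bs{\xi_m}|=1$ and $\sum_{m\ge1}\beta^{-m}=\tfrac{1}{\beta-1}<\infty$; hence, along every trajectory, $\bs{f_n}\to\bs{f_\infty}:=\bs{B_0}+\alpha\sum_{m\ge1}\beta^{-m}\bs{\xi_m}$ and $g_n\to g_\infty:=|\bs{B_0}|+\tfrac{\alpha}{\beta-1}>0$. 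Moreover $\beta^{n}g_n=r_n^*=|\bs{N_n}|$, which by \eqref{eq-rstar_n} is increasing in $n$ with value $|\bs{b_0}|+|\bs{B_0}|>0$ at $n=0$; thus $g_n>0$ for all $n$ and, together with $g_n\to g_\infty>0$, this gives $\inf_n g_n>0$. Therefore $\bs{\psi_n}=\bs{f_n}/g_n\to\bs{f_\infty}/g_\infty=\bs{\psi_\infty}$ surely, which yields the almost sure convergence and identifies the limit with the expression in the statement.

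For the rate I would decompose $\bs{\psi_n}-\bs{\psi_\infty}=\big(\bs{f_n}(g_\infty-g_n)+(\bs{f_n}-\bs{f_\infty})g_n\big)/(g_n g_\infty)$ and bound the two differences: $|g_\infty-g_n|=\beta^{-n}\big||\bs{b_0}|+\tfrac{\alpha}{1-\beta}\big|$ and, since $\bs{f_n}-\bs{f_\infty}=\beta^{-n}\bs{b_0}-\alpha\sum_{m>n}\beta^{-m}\bs{\xi_m}$, also $|\bs{f_n}-\bs{f_\infty}|\le\beta^{-n}|\bs{b_0}|+\alpha\sum_{m>n}\beta^{-m}=O(\beta^{-n})$. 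Using that $(\bs{f_n})_n$ is bounded (it converges), $(g_n)_n$ is bounded, and $\inf_n g_n>0$, one gets $|\bs{\psi_n}-\bs{\psi_\infty}|=O(\beta^{-n})$ at once.

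The support statements follow directly from the formula for $\bs{\psi_\infty}$. Summing the $k$ components of $\bs{f_\infty}$ and using $|\bs{\xi_m}|=1$ gives $|\bs{f_\infty}|=|\bs{B_0}|+\tfrac{\alpha}{\beta-1}=g_\infty$, so $|\bs{\psi_\infty}|=1$; combined with $\psi_{\infty\,i}\ge0$ (each $\bs{\psi_n}$ is a probability vector, hence so is the limit), this shows $\bs{\psi_\infty}\in\{\bs{x}\in[0,1]^k:|\bs{x}|=1\}$. For the last part, assume $B_{0\,i}>0$ for all $i$ (so in particular $k\ge2$). Then the $i$-th component of the numerator of $\bs{\psi_\infty}$ is at least $B_{0\,i}>0$, giving $\psi_{\infty\,i}\ge B_{0\,i}/g_\infty>0$; and since $\sum_{m\ge1}\beta^{-m}\xi_{m\,i}\le\tfrac{1}{\beta-1}$, that same component is at most $B_{0\,i}+\tfrac{\alpha}{\beta-1}=g_\infty-\sum_{j\ne i}B_{0\,j}<g_\infty$, giving $\psi_{\infty\,i}<1$. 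Both bounds are deterministic, so a fortiori $P\{\psi_{\infty\,i}\in(0,1)\}=1$ for each $i$.

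There is no genuine obstacle here: the whole argument is a limit computation on the explicit formula \eqref{eq-psi_n}. The two points that require a little care are the absolute convergence of $\sum_m\beta^{-m}\bs{\xi_m}$ (so the limit is well defined pathwise, and the convergence is not just in some weak sense) and the uniform lower bound $\inf_n g_n>0$, which is exactly what turns the elementary numerator/denominator estimates into the claimed geometric rate $O(\beta^{-n})$.
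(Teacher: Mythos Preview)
Your proof is correct and follows essentially the same route as the paper: both arguments read the convergence, the limit, and the rate directly off the closed form \eqref{eq-psi_n}, use the absolute convergence of $\sum_m\beta^{-m}\bs{\xi_m}$ for $\beta>1$, and derive the deterministic bounds on $\psi_{\infty\,i}$ from the explicit numerator/denominator. The only cosmetic difference is that the paper writes out $\bs{\psi_N}-\bs{\psi_\infty}$ as a single explicit fraction with a bounded remainder $\bs{R}$, whereas you use the generic $\bs{f_n}/g_n-\bs{f_\infty}/g_\infty$ decomposition; these are equivalent computations.
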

Note that $\bs{\psi_\infty}$ is a function of ${\bs
  \xi}=(\bs{\xi_n})_{n\geq 1}$, which takes values in
$(\{\bs{x}\in\{0,1\}^k:\, |\bs{x}|=1\})^\infty$.
\begin{proof}
By \eqref{eq-psi_n}, we have 
\[
\bs{\psi_N} = \frac{\bs{b_0}+\bs{B_N}}{r_N^*}=
\frac{ 
\bs{b_0}\beta^{-N} + \bs{B_0} + \alpha \sum_{n=1}^N \beta^{-n} \bs{\xi_{n}}
}{
|\bs{b_0}|\beta^{-N} +|\bs{B_0}| + \frac{\alpha }{\beta-1}( 1 - \beta^{-N}) }.
\]
Hence, the almost sure convergence immediately follows because 
$|\sum_{n\geq 1}\beta^{-n} \bs{\xi_{n}}|\leq \sum_{n\geq 1}\beta^{-n}<+\infty$.
Moreover, after some computations, we have 
$$
\bs{\psi_N}-\bs{\psi_\infty}=
\frac{
-\alpha\big(|\bs{B_0}|+\frac{\alpha}{\beta-1}\big)
\sum_{n\geq N+1}\beta^{-n}\bs{\xi_n}+\beta^{-N}\bs{R}
}
{
\big(|\bs{B_0}|+\frac{\alpha}{\beta-1}\big)
\Big(|\bs{B_0}|+\frac{\alpha}{\beta-1}+ 
\beta^{-N}\big(|\bs{b_0}|-\frac{\alpha}{\beta-1}\big)
\Big)
},
$$ where $\bs{R}=
\big(|\bs{B_0}|+\frac{\alpha}{\beta-1}\big)\bs{b_0}
-\big(|\bs{b_0}|-\frac{\alpha}{\beta-1}\big)
\big(\bs{B_0}+\alpha\sum_{n=1}^{+\infty}\beta^{-n}\bs{\xi_n}\big)$.
Therefore, since $|\sum_{n\geq N+1}\beta^{-n}\bs{\xi_n}|\leq
\sum_{n\geq N+1}\beta^{-n}=\beta^{-N}/(\beta-1)$ and $|\bs{R}|$ is
bounded by a constant, we obtain that
$$
|\bs{\psi_N}-\bs{\psi_\infty}|=O(\beta^{-N}).
$$ 
In order to conclude, it is enough to recall that, by definition,
we have $\bs{\psi_N}\in [0,1]^k$ with $|\bs{\psi_N}|=1$ and observe
that, if $B_{0,\, i}>0$ for all $i$, then we have 
$$ 
0<
\frac{B_{0\, i}}
{|\bs{B_0}| + \frac{\alpha}{\beta-1}}
\leq 
\psi_{\infty\, i}= 
\frac{B_{0\, i} + \alpha \sum_{n=1}^\infty \beta^{-n} \xi_{n\,i}}
{|\bs{B_0}| + \frac{\alpha}{\beta-1}}
\leq 
\frac{B_{0\, i} + \frac{\alpha}{\beta-1} }
{|\bs{B_0}| + \frac{\alpha}{\beta-1}}<1. \qedhere
$$
\end{proof}

\begin{theorem}\label{th-xi-media}
We have $\bs{\overline{\xi}_{N}}\stackrel{a.s.}\longrightarrow
{\bs{\psi_{\infty}}}$ and
\begin{equation*}
\sqrt{N}\left(\bs{\overline{\xi}_{N}}-\bs{\psi_{N}}\right)
\mathop{\longrightarrow}^{s}
\mathcal{N}\left(\bs{0}, \Sigma^2 \right)
\quad\mbox{and}\quad
\sqrt{N}\left(\bs{\overline{\xi}_{N}}-{\bs{\psi_{\infty}}}\right)
\mathop{\longrightarrow}^{s}
\mathcal{N}\left(\bs{0}, \Sigma^2\right)
\end{equation*}
where $\Sigma^2 = \mathrm{diag} ({\bs{\psi_{\infty}}}) -
{\bs{\psi_{\infty}}} {\bs{\psi}_\bs{\infty}^\top}$ and
$\mathop{\longrightarrow}\limits^{s}$ means stable convergence.
\end{theorem}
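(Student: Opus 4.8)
The plan is to place ourselves in the classical martingale regime, which is exactly what $\beta>1$ provides. Introduce the $(\mathcal{F}_n)$-martingale
\[
\bs{M_N}=\sum_{n=1}^N\big(\bs{\xi_n}-\bs{\psi_{n-1}}\big),
\]
whose increments are centered by \eqref{eq:extract1a:K-vettoriale} (since $\bs{\psi_{n-1}}=E[\bs{\xi_n}\mid\mathcal{F}_{n-1}]$) and uniformly bounded, $|\bs{\xi_n}-\bs{\psi_{n-1}}|\leq 2$. Averaging the identity $\bs{\xi_n}=\bs{\psi_{n-1}}+(\bs{\xi_n}-\bs{\psi_{n-1}})$ gives the basic decomposition
\[
\bs{\overline{\xi}_N}=\frac1N\sum_{n=1}^N\bs{\psi_{n-1}}+\frac{\bs{M_N}}{N}.
\]

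\emph{Almost sure convergence.} By Theorem~\ref{th-psi} we have $\bs{\psi_n}\to\bs{\psi_\infty}$ a.s., so $\frac1N\sum_{n=1}^N\bs{\psi_{n-1}}\to\bs{\psi_\infty}$ a.s.\ by Ces\`aro. Since the martingale $\sum_n(\bs{\xi_n}-\bs{\psi_{n-1}})/n$ has summable increment variances it converges a.s., and Kronecker's lemma gives $\bs{M_N}/N\to\bs{0}$ a.s.; hence $\bs{\overline{\xi}_N}\to\bs{\psi_\infty}$ a.s.

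\emph{The two CLTs.} The decisive input is that Theorem~\ref{th-psi} provides the deterministic-rate bound $|\bs{\psi_n}-\bs{\psi_\infty}|=O(\beta^{-n})$; hence $\sum_{n\geq1}(\bs{\psi_{n-1}}-\bs{\psi_\infty})$ is absolutely convergent and $\frac1N\sum_{n=1}^N(\bs{\psi_{n-1}}-\bs{\psi_\infty})=O(1/N)=o(1/\sqrt N)$. Therefore
\[
\sqrt N\,(\bs{\overline{\xi}_N}-\bs{\psi_\infty})=\frac{\bs{M_N}}{\sqrt N}+o(1)\quad\text{a.s.},
\]
and, because $\sqrt N\,|\bs{\psi_\infty}-\bs{\psi_N}|=O(\sqrt N\,\beta^{-N})\to0$, also $\sqrt N\,(\bs{\overline{\xi}_N}-\bs{\psi_N})=\bs{M_N}/\sqrt N+o(1)$ a.s. Both statements thus reduce to the stable convergence $\bs{M_N}/\sqrt N\mathop{\longrightarrow}\limits^{s}\mathcal{N}(\bs{0},\Sigma^2)$. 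For this I would invoke a multidimensional martingale central limit theorem in its stable form (of the type used in the reinforced-processes literature, e.g.~\cite{AlCrGh, BeCrPrRi11}, together with the Cram\'er--Wold device to pass from linear combinations to the joint statement): the conditional Lindeberg condition is immediate from $|\bs{\xi_n}-\bs{\psi_{n-1}}|\leq2$, while
\[
\frac1N\sum_{n=1}^N E\big[(\bs{\xi_n}-\bs{\psi_{n-1}})(\bs{\xi_n}-\bs{\psi_{n-1}})^\top\mid\mathcal{F}_{n-1}\big]
=\frac1N\sum_{n=1}^N\big(\mathrm{diag}(\bs{\psi_{n-1}})-\bs{\psi_{n-1}}\bs{\psi_{n-1}}^\top\big)
\longrightarrow\Sigma^2
\]
a.s.\ by Ces\`aro and Theorem~\ref{th-psi}, with $\Sigma^2=\mathrm{diag}(\bs{\psi_\infty})-\bs{\psi_\infty}\bs{\psi_\infty}^\top$. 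Combining this with the two displays above finishes the proof.

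\emph{Main obstacle.} The one genuinely delicate point — essentially bookkeeping — is the \emph{mode} of convergence: the limiting covariance $\Sigma^2$ is a nondegenerate random matrix (a function of $\bs{\psi_\infty}$, hence of the whole trajectory $(\bs{\xi_n})$), so one must use a martingale CLT that yields stable convergence rather than mere convergence in law, and one must check that the Ces\`aro limit of the conditional covariances holds almost surely and not just in probability. Everything else is the standard ``martingale $+$ exponentially small remainder'' argument.
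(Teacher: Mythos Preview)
Your proof is correct and follows essentially the same route as the paper: both reduce the two CLTs to the stable convergence of the martingale array $N^{-1/2}\sum_{n=1}^N(\bs{\xi_n}-\bs{\psi_{n-1}})$, disposing of the remainder via the exponential rate $|\bs{\psi_n}-\bs{\psi_\infty}|=O(\beta^{-n})$ from Theorem~\ref{th-psi}. The only cosmetic differences are that the paper centers first at $\bs{\psi_N}$ (using the Abel summation $\sum_{n=1}^N\bs{\psi_{n-1}}-N\bs{\psi_N}=\sum_{n=1}^N n(\bs{\psi_{n-1}}-\bs{\psi_n})$ together with $\sum_n n/r^*_n<\infty$) and then passes to $\bs{\psi_\infty}$, whereas you do the reverse, and the paper verifies the pair of conditions $E[\max_n|\bs{Y_{N,n}}|]\to0$ and $\sum_n\bs{Y_{N,n}}\bs{Y_{N,n}}^\top\to\Sigma^2$ from \cite[Cor.~7]{CriLetPra} rather than the Lindeberg-plus-conditional-covariance form you state.
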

Note that, if $B_{0\,i}>0$ for all $i=1,\dots,k$, then
$\Sigma^2_{i,j}\in (0,1)$ for each pair $(i,j)$.  \\ 

\indent The stable convergence has been introduced in \cite{ren} and,
for its definition and properties, we refer to \cite{crimaldi-libro,
  CriLetPra, HallHeyde}.

\begin{proof} 
The almost sure convergence of $\bs{\overline{\xi}_N}$ to
$\bs{\psi_\infty}$ follows by usual martingale arguments (see, for
instance, \cite[Lemma 2]{BeCrPrRi11}) because
$E[\bs{\xi_{n+1}}|\mathcal{F}_n]=\bs{\psi_n}\to \bs{\psi_\infty}$
a.s. and $\sum_{n\geq 1} E[\|\bs{\xi_n}\|^2 ]n^{-2}\leq \sum_{n\geq 1}
n^{-2}<+\infty$.  \\

Regarding the CLTs, we observe that, by means
of \eqref{dinamica-psin}, we can write
\begin{equation}\label{increment-psi}
\bs{\psi_{n+1}}-\bs{\psi_{n}} =
\frac{\bs{H}(\bs{\psi_{n}})}{{r^*_{n+1}}}+
\frac{\Delta \bs{M_{n+1}} }{{r^*_{n+1}}},
\end{equation}
where $\bs{H}(\bs{x})=(\beta-1)|\bs{b_0}|(\bs{x}-\bs{p_0})$ and
$\Delta\bs{M_{n+1}}=\alpha({\bs{\xi_{n+1}}}-\bs{\psi_{n}})$.  
Therefore, we get 
$$
\begin{aligned}
\sqrt{N}\left(\bs{\overline{\xi}_{N}}-\bs{\psi_{N}}\right) &=
\frac{1}{\sqrt{N}}\left(N\bs{\overline{\xi}_{N}}-N\bs{\psi_{N}}\right)
=
\frac{1}{\sqrt{N}}\sum_{n=1}^N\left[\bs{\xi_{n}}-{\bs{\psi_{n-1}}}
+n({\bs{\psi_{n-1}}}-{\bs{\psi_{n}}})\right]
\\ &
=\sum_{n=1}^N \bs{Y_{N,n}}+\bs{Q_N},
\end{aligned}
$$
where
$$
\bs{Y_{N,n}}=
\frac{\bs{\xi_{n}}-{\bs{\psi_{n-1}}}}{\sqrt{N}}=
\frac{\Delta\bs{M_{n+1}}}{\alpha\sqrt{N}}
$$
and
$$
\bs{Q_N}=
\frac{1}{\sqrt{N}}\sum_{n=1}^N n\left({\bs{\psi_{n-1}}}-{\bs{\psi_{n}}}\right)
=
-\frac{1}{\sqrt{N}}\sum_{n=1}^N 
\frac{n}{r^*_{n}}( \bs{H}({\bs{\psi_{n-1}}}) +
\Delta\bs{M_{n}}).
$$ Since $\sum_{n\geq 1} n/{r^*_{n}}<+\infty$ and
$|\bs{H}({\bs{\psi_{n-1}}})| + |\Delta \bs{M_{n}}|$ is uniformly
bounded by a constant, we have that $\bs{Q_N}$ converges to zero
almost surely. Therefore it is enough to prove that
$\sum_{n=1}^N\bs{Y_{N,n}}$ converges stably to the desired Gaussian
kernel. To this purpose we observe that
$E[\bs{Y_{N,n}}|\mathcal{F}_{n-1}]=\bs{0}$ and so, in order to prove
the stable convergence, we have to check the following conditions (see
\cite[Cor.~7]{CriLetPra} or \cite[Cor.~5.5.2]{crimaldi-libro}):
\begin{itemize}
\item[(c1)] $E\left[\,\max_{1\leq n\leq N} |\bs{Y_{N,n}} |\,\right]\to 0$ and 
\item[(c2)] $\sum_{n=1}^N \bs{Y_{N,n}} {\bs{Y}_{\bs{N,n}}^\top}
\mathop{\longrightarrow}\limits^{P}
\Sigma^2$.
\end{itemize}
Regarding (c1), we observe that $\max_{1\leq n\leq N} |\bs{Y_{N,n}} |
\leq \frac{1}{\sqrt{N}}\max_{1\leq n\leq N} |\bs{\xi_{n}}
-{\bs{\psi_{n-1}}}| \leq \frac{1}{\sqrt{N}} \to 0$.  In order to
conclude, we have to prove condition (c2), that is
$$
\sum_{n=1}^N \bs{Y_{N,n}} {\bs{Y}_{\bs{N,n}}^\top}
=
\frac{1}{N}\sum_{n=1}^N
(\bs{\xi_{n}}-{\bs{\psi_{n-1}}})(\bs{\xi_{n}}-{\bs{\psi_{n-1}}})^\top
\mathop{\longrightarrow}\limits^{P} \Sigma^2.
$$ The above convergence holds true even almost surely by usual
martingale arguments (see, for instance, \cite[Lemma 2]{BeCrPrRi11}).
Indeed, we have $\sum_{n\geq 1}
E[\|\bs{\xi_{n}}-{\bs{\psi_{n-1}}}\|^2]/n^2\leq \sum_{n\geq 1}
n^{-2}<+\infty$ and
$$
E[
(\bs{\xi_{n}}-{\bs{\psi_{n-1}}})(\bs{\xi_{n}}-{\bs{\psi_{n-1}}})^\top
|\mathcal{F}_{n-1}]=
\mathrm{diag} ({\bs{\psi_{n-1}}}) - {\bs{\psi_{n-1}}}
{\bs{\psi_{n-1}}^\top}
\mathop{\longrightarrow}\limits^{a.s} 
\Sigma^2.
$$
The last stable convergence follows from the equality
$$
\sqrt{N}(\bs{\overline{\xi}_{N}}-{\bs{\psi_{\infty}}})=
\sqrt{N}(\bs{\overline{\xi}_{N}}-\bs{\psi_{N}})+
\sqrt{N}(\bs{\psi_{N}}-{\bs{\psi_{\infty}}}),
$$
where the last term converges almost surely to zero by Theorem \ref{th-psi}.
\end{proof}

\begin{remark}\rm 
 Equation \eqref{increment-psi} implies that the bounded stochastic
 processes $\bs{\psi}= (\bs{\psi_{n}})_n$ is a positive
 (i.e.\ non-negative) almost supermartingale \cite{rob} and also a
 quasi-martingale \cite{met}, because $\bs{H}({\bs{\psi_{n}}})$ is
 uniformly bounded by a constant and $\sum_{n\geq 1}
 1/{r^*_{n+1}}<+\infty$.
\end{remark}


\subsection{The case $\alpha=0$}
\label{sec-alpha0}

The model introduced above for $\alpha>0$ makes sense also when
$\alpha=0$. For completeness, in this section we discuss
this case. Recall that we are assuming $|\bs{b_0}|>0$ and
$b_{0\,i}+B_{0\,i}>0$ (see Sec.~\ref{urn-model}). For the case
$\beta>1$, we here assume also $|\bs{B_0}|>0$.  \\

\indent When $\alpha=0$, the random vectors $\bs{\xi_n}$ are
independent with
$$
P(\xi_{n\, i}=1)=\psi_{n\, i}=
\frac{b_{0\,i}+\beta^{n}B_{0\, i}}{|\bs{b_0}|+\beta^{n}|\bs{B_0}|}.
$$ 
Therefore, we have $\psi_{n\,i}=\frac{b_{0\,i}+B_{0\,
    i}}{|\bs{b_0}|+|\bs{B_0}|}$ for all $n$ if $\beta=1$ (which
corresponds to the classical multinomial model) and 
$$
\psi_{n\,i}\longrightarrow
\begin{cases}
\frac{b_{0\,i}}{|\bs{b_0}|}\qquad &\mbox{if } \beta\in [0,1)\\
\frac{B_{0\,i}}{|\bs{B_0}|}\qquad &\mbox{if } \beta>1.
\end{cases}
$$
Moreover, the following result holds true:
\begin{theorem}
We have 
$$
\bs{\overline{\xi}_{N}}\stackrel{a.s.}
\longrightarrow \bs{\overline{\xi}_\infty}=
\begin{cases}
\frac{\bs{b_0}+\bs{B_0}}{|\bs{b_0}|+|\bs{B_0}|} \quad&\mbox{if } \beta=1,\\
\frac{\bs{b_0}}{|\bs{b_0}|}=\bs{p_0} \quad&\mbox{if } \beta\in [0,1),\\
\frac{\bs{B_0}}{|\bs{B_0}|} \quad&\mbox{if } \beta>1.\\
\end{cases}
$$
Moreover, we have 
$$
\sqrt{N}\left(\bs{\overline{\xi}_{N}}-\bs{\overline{\xi}_\infty}\right)
\stackrel{s}\longrightarrow \mathcal{N}(\bs{0},\Sigma^2),
$$
where $\Sigma^2 = \mathrm{diag} ({\bs{\psi_{\infty}}}) -
{\bs{\psi_{\infty}}} {\bs{\psi}_\bs{\infty}^\top}$ and
$\mathop{\longrightarrow}\limits^{s}$ means stable convergence.
\end{theorem}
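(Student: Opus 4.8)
The plan is to exploit that $\alpha=0$ switches off the reinforcement, so that $(\bs{\xi_n})_{n\geq1}$ is a sequence of \emph{independent} $\{0,1\}^k$-valued vectors with $|\bs{\xi_n}|=1$, whose (deterministic) means $E[\bs{\xi_n}]=\bs{\psi_{n-1}}$ converge to $\bs{\psi_\infty}=\bs{\overline{\xi}_\infty}$; moreover, when $\beta\neq1$, the explicit formula for $\bs{\psi_n}$ shows that this convergence is geometric, namely $|\bs{\psi_{n-1}}-\bs{\psi_\infty}|=O(\beta^{n})$ for $\beta\in[0,1)$ and $|\bs{\psi_{n-1}}-\bs{\psi_\infty}|=O(\beta^{-n})$ for $\beta>1$ (with $|\bs{B_0}|>0$ in the latter case), while $\bs{\psi_{n-1}}\equiv\bs{\psi_\infty}$ for every $n$ when $\beta=1$. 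Everything then follows from classical limit theorems for independent summands, so that no feature of the reinforcement dynamics is needed and the three regimes are handled simultaneously, the only regime-dependent input being the rate above.

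For the almost sure convergence: when $\beta=1$ the $\bs{\xi_n}$ are i.i.d.\ and this is the ordinary strong law. When $\beta\neq1$, since $\sum_n\mathrm{Var}(\xi_{n\,i})/n^2\leq\sum_n n^{-2}<+\infty$, the standard $L^2$ martingale argument (cf.\ \cite[Lemma~2]{BeCrPrRi11}) gives $\frac1N\sum_{n=1}^N(\bs{\xi_n}-\bs{\psi_{n-1}})\to\bs{0}$ a.s.; combined with the averaging $\frac1N\sum_{n=1}^N\bs{\psi_{n-1}}\to\bs{\psi_\infty}$ (Ces\`aro), this yields $\bs{\overline{\xi}_N}\to\bs{\psi_\infty}=\bs{\overline{\xi}_\infty}$ a.s.

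For the stable CLT I would follow the scheme already used in the proof of Theorem~\ref{th-xi-media}. Writing
\[
\sqrt N\,(\bs{\overline{\xi}_N}-\bs{\psi_\infty})=\sum_{n=1}^N\bs{Y_{N,n}}+\bs{r_N},\qquad
\bs{Y_{N,n}}=\frac{\bs{\xi_n}-\bs{\psi_{n-1}}}{\sqrt N},\qquad
\bs{r_N}=\frac1{\sqrt N}\sum_{n=1}^N(\bs{\psi_{n-1}}-\bs{\psi_\infty}),
\]
the deterministic remainder is negligible: $\bs{r_N}=\bs{0}$ when $\beta=1$, while $|\bs{r_N}|\leq N^{-1/2}\sum_{n\geq1}|\bs{\psi_{n-1}}-\bs{\psi_\infty}|=O(N^{-1/2})$ when $\beta\neq1$ by the geometric bound, so $\bs{r_N}\to\bs{0}$. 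The array $\{\bs{Y_{N,n}}\}$ is a martingale difference array with respect to $(\mathcal F_n)$, since $\bs{\xi_n}$ is independent of $\mathcal F_{n-1}$ and $\bs{\psi_{n-1}}$ is its (deterministic) mean, so it is enough to verify conditions (c1) and (c2) of the stable martingale CLT (\cite[Cor.~7]{CriLetPra} or \cite[Cor.~5.5.2]{crimaldi-libro}), exactly as in Theorem~\ref{th-xi-media}. Condition (c1) is immediate: $\max_{1\leq n\leq N}|\bs{Y_{N,n}}|\leq N^{-1/2}\max_n|\bs{\xi_n}-\bs{\psi_{n-1}}|=O(N^{-1/2})$. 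For (c2), $\sum_{n=1}^N\bs{Y_{N,n}}\bs{Y_{N,n}}^\top=\frac1N\sum_{n=1}^N(\bs{\xi_n}-\bs{\psi_{n-1}})(\bs{\xi_n}-\bs{\psi_{n-1}})^\top$, and, using $\bs{\xi_n}\bs{\xi_n}^\top=\mathrm{diag}(\bs{\xi_n})$ and independence, $E[(\bs{\xi_n}-\bs{\psi_{n-1}})(\bs{\xi_n}-\bs{\psi_{n-1}})^\top\,|\,\mathcal F_{n-1}]=\mathrm{diag}(\bs{\psi_{n-1}})-\bs{\psi_{n-1}}\bs{\psi_{n-1}}^\top\to\Sigma^2$; the usual $L^2$ martingale argument together with Ces\`aro then gives $\sum_{n=1}^N\bs{Y_{N,n}}\bs{Y_{N,n}}^\top\to\Sigma^2$ a.s., hence in probability. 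Therefore $\sum_{n=1}^N\bs{Y_{N,n}}$ converges stably to $\mathcal N(\bs{0},\Sigma^2)$, and adding the vanishing term $\bs{r_N}$ concludes.

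There is no deep obstacle: the statement is essentially a central limit theorem for independent, non-identically distributed, bounded vectors. The two points deserving attention are the geometric rate of $\bs{\psi_n}\to\bs{\psi_\infty}$ for $\beta\neq1$ --- which is exactly what makes the deterministic bias $\bs{r_N}$ disappear at the $\sqrt N$ scale, and the only place where the three regimes are treated differently --- and the decision to route the argument through the \emph{stable} martingale CLT rather than an ordinary CLT for independent summands, so as to obtain the stable (and not merely distributional) convergence asserted in the theorem.
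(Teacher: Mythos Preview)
Your proof is correct and, for the CLT part, essentially identical to the paper's: both decompose $\sqrt{N}(\bs{\overline{\xi}_N}-\bs{\overline{\xi}_\infty})$ into the martingale-difference array $\bs{Y_{N,n}}=(\bs{\xi_n}-\bs{\psi_{n-1}})/\sqrt{N}$ plus the deterministic bias $\bs{r_N}=N^{-1/2}\sum_{n=1}^N(\bs{\psi_{n-1}}-\bs{\psi_\infty})$, dispose of $\bs{r_N}$ via the geometric rate $|\bs{\psi_{n-1}}-\bs{\psi_\infty}|=O(\beta^{\pm n})$ (and $\bs{r_N}\equiv\bs{0}$ for $\beta=1$), and verify (c1)--(c2) exactly as in Theorem~\ref{th-xi-media} to invoke the stable martingale CLT.

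The only genuine difference is in the almost sure convergence. The paper argues componentwise via the Borel--Cantelli lemmas for independent events, splitting according to whether $\sum_n\psi_{n\,i}$ is finite or infinite; this forces a five-fold case analysis (depending on $\beta$ and on whether $b_{0\,i}$ or $B_{0\,i}$ vanishes). You instead apply the $L^2$ martingale SLLN to $\bs{\xi_n}-\bs{\psi_{n-1}}$ and then Ces\`aro-average the deterministic sequence $\bs{\psi_{n-1}}\to\bs{\psi_\infty}$. Your route is more economical, treating all coordinates and all three regimes uniformly; the paper's route, on the other hand, extracts the slightly sharper by-product that $\xi_{n\,i}=0$ eventually whenever the limiting coordinate $\psi_{\infty\,i}$ is zero.
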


\begin{proof} The almost sure convergence follows from the Borel–Cantelli 
lemmas (see, for instance, 
\cite[Section 12.15]{williams}). Indeed, we have:
\begin{itemize}
\item if $\sum_{n\geq 0}\psi_{n,i}<+\infty$, then 
$\sum_{n=1}^N\xi_{n\,i}\stackrel{a.s.}\longrightarrow \xi_{\infty\,i}$, with 
$P(\xi_{\infty\,i}<+\infty)=1$;
\item if $\sum_{n\geq 0}\psi_{n,i}=+\infty$, then 
$\sum_{n=1}^N\xi_{n\,i}/\sum_{n=1}^N\psi_{n-1\,i}\stackrel{a.s.}\longrightarrow 1$.
\end{itemize} 
Hence, the statement of the theorem follows because:
\begin{itemize}
\item[(i)] if $\beta=1$, then $\sum_{n\geq
  0}\psi_{n\,i}=+\infty$ and $\sum_{n=1}^{N}\psi_{n-1\,i}\sim
  \frac{b_{0\,i}+B_{0\,i}}{|\bs{b_0}|+|\bs{B_0}|} N$;
\item[(ii)] if  $\beta\in [0,1)$ and $b_{0\,i}>0$, then $\sum_{n\geq
  0}\psi_{n\,i}=+\infty$ and $\sum_{n=1}^{N}\psi_{n-1\,i}\sim
  \frac{b_{0\,i}}{|\bs{b_0}|} N=p_{0\,i} N$;
\item[(iii)] if $\beta\in [0,1)$ and $b_{0\,i}=0$, then $\sum_{n\geq
  0}\psi_{n\,i}\leq \frac{B_{0\,i}}{|\bs{b_0}|}\sum_{n\geq 0}\beta^n
  <+\infty$ and so $\sum_{n\geq 1}\xi_{n\,i}<+\infty$ a.s., that is
  $\xi_{n\,i}=0$ eventually with probability one;
\item[(iv)] if $\beta>1$ and $B_{0\,i}> 0$, then $\sum_{n\geq
  0}\psi_{n\,i}=+\infty$ and $\sum_{n=1}^{N}\psi_{n-1\,i}\sim
  \frac{B_{0\,i}}{|\bs{B_0}|} N$;
\item[(v)] if $\beta>1$ and $B_{0\,i}=0$, then $\sum_{n\geq
  0}\psi_{n\,i}\leq \frac{b_{0\,i}}{|\bs{B_0}|}\sum_{n\geq
  0}\beta^{-n}<+\infty$ and so $\sum_{n\geq 1}\xi_{n\,i}<+\infty$
  a.s., that is $\xi_{n\,i}=0$ eventually with probability one.
\end{itemize}
For the CLT we argue as in the proof of Theorem
\ref{th-xi-media}. Indeed, we set $\bs{Y_{N,n}}=
\frac{\bs{\xi_{n}}-{\bs{\psi_{n-1}}}}{\sqrt{N}}$ so that we have
$$
\sqrt{N}\left(\bs{\overline{\xi}_{N}}-\bs{\overline{\xi}_\infty}\right)=
\sum_{n=1}^N\bs{Y_{N,n}}+
\frac{1}{\sqrt{N}}
\sum_{n=1}^N\big(\bs{\psi_{n-1}}-\bs{\overline{\xi}_\infty}\big),
$$
where the second term converges to zero because 
$$
\sum_{n=1}^N \big| \bs{\psi_{n-1}}-\bs{\overline{\xi}_\infty}\big|
=
\begin{cases}
0\quad&\mbox{if } \beta=1,\\
O\left(\sum_{n=1}^N \beta^n\right)\quad&\mbox{if } \beta\in [0,1),\\
O\left(\sum_{n=1}^N \beta^{-n}\right)\quad&\mbox{if } \beta>1.\\
\end{cases}
$$
Therefore it is enough to prove that
$\sum_{n=1}^N\bs{Y_{N,n}}$ converges stably to the desired Gaussian
kernel. To this purpose we observe that
$E[\bs{Y_{N,n}}|\mathcal{F}_{n-1}]=\bs{0}$ and so, in order to prove
the stable convergence, we have to check the following conditions (see
\cite[Cor.~7]{CriLetPra} or \cite[Cor.~5.5.2]{crimaldi-libro}):
\begin{itemize}
\item[(c1)] $E\left[\,\max_{1\leq n\leq N} |\bs{Y_{N,n}} |\,\right]\to 0$ and 
\item[(c2)] $\sum_{n=1}^N \bs{Y_{N,n}} {\bs{Y}_{\bs{N,n}}^\top}
\mathop{\longrightarrow}\limits^{P}
\Sigma^2$.
\end{itemize}
Regarding (c1), we note that $\max_{1\leq n\leq N} |\bs{Y_{N,n}} |
\leq \frac{1}{\sqrt{N}}\max_{1\leq n\leq N} |\bs{\xi_{n}}
-{\bs{\psi_{n-1}}}| \leq \frac{1}{\sqrt{N}} \to 0$.  Regarding
condition (c2), we observe that 
$$
\sum_{n=1}^N \bs{Y_{N,n}} {\bs{Y}_{\bs{N,n}}^\top}
=
\frac{1}{N}\sum_{n=1}^N
(\bs{\xi_{n}}-{\bs{\psi_{n-1}}})(\bs{\xi_{n}}-{\bs{\psi_{n-1}}})^\top
\mathop{\longrightarrow}\limits^{a.s.} \Sigma^2,
$$ 
because (see, for instance, \cite[Lemma 2]{BeCrPrRi11})
$\sum_{n\geq 1} E[\|\bs{\xi_{n}}-{\bs{\psi_{n-1}}}\|^2]/n^2\leq
\sum_{n\geq 1} n^{-2}<+\infty$ and
$$
E[
(\bs{\xi_{n}}-{\bs{\psi_{n-1}}})(\bs{\xi_{n}}-{\bs{\psi_{n-1}}})^\top
|\mathcal{F}_{n-1}]=
\mathrm{diag} ({\bs{\psi_{n-1}}}) - {\bs{\psi_{n-1}}}
{\bs{\psi_{n-1}}^\top}
\mathop{\longrightarrow}\limits^{a.s} 
\Sigma^2.
$$
\end{proof}


\section{Proof of the goodness of fit result 
(Theorem \ref{th-chi-squared-test})}
\label{proof-chi-squared-test}
The proof is based on Theorem \ref{thm:CLT_Beta0:K} (for $\beta = 0$)
and Theorem~\ref{thm:CLT_beta<1} (for $0<\beta<1$), whose proofs are
in Sections \ref{sec-beta0} and \ref{sec-beta-minore1},
respectively. The almost sure convergence of $O_i/N$ immediately
follows since $O_i/N=\overline{\xi}_{N\,i}$. In order to prove the
stated convergence in distribution, we mimic the classical proof for
the Pearson chi-squared test based on Sherman Morison formula (see
\cite{SM50}), but see also \cite[Corollary~2]{RaoScott81}.

\begin{proof}
We start recalling the Sherman Morison formula: if $A$ is an
invertible square matrix and $ 1 - \bs{v}^\top A^{-1} \bs{u} \neq 0 $,
then
\[
(A - \bs{u}\bs{v}^\top)^{-1} =  
A^{-1} + \frac{A^{-1} \bs{u}\bs{v}^\top A^{-1}}{1 - \bs{v}^\top A^{-1} \bs{u} }.
\]
Given the observation
$\bs{\xi_{n}}=(\xi_{n\,1},\dots,\xi_{n\,k})^{\top}$, we define the
``truncated'' vector $\bs{\xi^*_{n}}= (\xi^*_{n\, 1}, \ldots,
\xi^*_{n\, k-1})^\top$, given by the first $k-1$ components of
$\bs{\xi_{n}}$. Theorem~\ref{thm:CLT_Beta0:K} (for
$\beta = 0$) and Theorem~\ref{thm:CLT_beta<1} (for $\beta \in (0,1)$) give 
the Central Limit Theorem for $(\bs{\xi_{n}})_n$, that immediately
implies 
\begin{equation}\label{eq:truncCLT}
\sqrt{N}\left(\bs{\overline{\xi}^*_{N}}-\bs{p^*}\right)
=
\frac{\sum_{n=1}^N (\bs{\xi^*_n}  - \bs{p^*}) }{\sqrt{N}} 
\mathop{\longrightarrow}^{d} 
\mathcal{N} (\bs{0},\Sigma_*^2),
\end{equation}
where $\bs{p^*}$ is given by the first $k-1$ components of $\bs{p_0}$
and 
\[ 
\Sigma_*^2 = \lambda ( \mathrm{diag} (\bs{p^*}) - \bs{p^*}\bs{p^*}^T).
\]  
By assumption $p_{0\,i}>0$ for all $i=1,\dots,k$ and so $\mathrm{diag}
(\bs{p^*})$ is invertible with inverse $\mathrm{diag} (\bs{p^*})^{-1}
= \mathrm{diag} (\frac{1}{p_{0\,1}}, \ldots, \frac{1}{p_{0\,k-1}} )$
and, since $ (\mathrm{diag} (\bs{p^*})^{-1} ) \bs{p^*} =
\bs{1}\in\mathbb{\R}^{k-1}$, we have
\[
1 -  \bs{p^*}^T \mathrm{diag} (\bs{p^*})^{-1} \bs{p^*} = 
1-\sum_{i=1}^{k-1}p_{0\,i}=
\sum_{i=1}^k p_{0\,i} - \sum_{i=1}^{k-1} p_{0\,i} = p_{0\,k} >0.
\]
Therefore we can use the Sherman Morison formula with $A =
\mathrm{diag} (\bs{p^*})$ and $\bs{u}= \bs{v} = \bs{p^*}$, and we
obtain
\begin{equation}\label{eq:SMformula}
(\Sigma_*^2)^{-1} = \frac{1}{\lambda} 
( \mathrm{diag} (\bs{p^*}) - \bs{p^*}\bs{p^*}^T)^{-1}
=
\frac{1}{\lambda} 
\Big( \mathrm{diag} (\tfrac{1}{p_{0\,1}}, \ldots, \tfrac{1}{p_{0\,k-1}} ) 
+ \frac{1}{p_{0\,k}} \bs{1}\bs{1}^\top \Big).
\end{equation}
Now, since $\sum_{i=1}^{k} (\overline{\xi}_{N\,i} - {p_{0\,i}}) = 0$, 
then $\overline{\xi}_{N\,k} - {p_{0\,k}} = \sum_{i=1}^{k-1}
(\overline{\xi}_{N\,i} - {p_{0\,i}})$ and so we get
\[
\begin{aligned}
\sum_{i=1}^{k} \frac{(O_{i} - N{p_{0\,i}})^2}{N{p_{0\,i}}} & = 
N \sum_{i=1}^{k} \frac{(\overline{\xi}_{N\,i} - {p_{0\,i}})^2}{{p_{0\,i}}} 
= 
N \Big[ 
\sum_{i=1}^{k-1} \frac{(\overline{\xi}_{N\,i} - {p_{0\,i}})^2}{{p_{0\,i}}} + 
\frac{(\overline{\xi}_{N\,k} - {p_{0\,k}})^2}{{p_{0\,k}}} \Big]
\\
& = 
N \Big[ \sum_{i=1}^{k-1} \frac{(\overline{\xi}_{N\,i} - {p_{0\,i}})^2}{{p_{0\,i}}} + 
\frac{(\sum_{i=1}^{k-1} (\overline{\xi}_{N\,i} - {p_{0\,i}}) )^2}{{p_{0\,k}}} 
\Big]
\\
& = 
N \sum_{i_1,i_2=1}^{k-1} (\overline{\xi}_{N\,{i_1}} - {p_{0\,i_1}}) 
(\overline{\xi}_{N\,i_2} - {p_{0\,i_2}}) 
\Big( \delta_{i_1}^{i_2}\frac{1}{{p_{0\,i_1}}} + 
\frac{1}{{p_{0\,k}}} \Big),
\end{aligned}
\]
where $\delta_{i_1}^{i_2}$ is equal to $1$ if $i_1=i_2$ and equal to
zero otherwise.  Finally, from the above equalities, recalling
\eqref{eq:truncCLT} and \eqref{eq:SMformula}, we obtain
\begin{equation*}
\sum_{i=1}^{k} \frac{(O_{i} - N{p_{0\,i}})^2}{N{p_{0\,i}}} =
\lambda 
N (\bs{\overline{\xi}^*_N}  - \bs{p^*})^\top 
(\Sigma_*^2)^{-1} (\bs{\overline{\xi}^*_N}  - \bs{p^*})
\mathop{\longrightarrow}\limits^{d}
\lambda W_0=W_*,
\end{equation*}
where $W_0$ is a random variable with distribution
$\chi^2(k-1)=\Gamma((k-1)/2,1/2)$, where $\Gamma(a,b)$ denotes the
Gamma distribution with density function
$$
f(w)=\frac{b^a}{\Gamma(a)}w^{a-1}e^{-bw}.
$$
As a consequence, $W_*$ has distribution $\Gamma((k-1)/2,1/(2\lambda))$.
\end{proof}

\noindent {\bf Declaration}\\
\noindent All the authors developed the theoretical results, performed
the numerical simulations, contributed to the final version of the
manuscript.

\appendix

\numberwithin{equation}{section}

\section{}

\subsection{A central limit theorem for a multidimensional compact Markov 
chain}
\label{sec-clt-markov}

In this section we prove the general Central Limit Theorem for Markov
chains, used for the proof of Theorem \ref{thm:CLT_beta<1}.  \\

\indent Let $(S,d)$ be a {\em compact metric space} and denote by
$C(S)$ the space of continuous real functions on $S$, by $Lip(S)$ the
space of Lipschitz continuous real functions on $S$ and by
$Lip(S\times S)$ the space of Lipschitz continuous real functions on
$S\times S$.  Moreover, we define $\|f\|_\infty=\sup_{x\in S}|f(x)|$
for each $f$ in $C(S)$ and, for each $f$ in $Lip(S)$,
\[
|f|_{Lip} = \sup_{x,y\in S,\, x\neq y} 
\frac{|f(y)-f(x)|}{d(x,y)} 
\qquad\mbox{and }\qquad
\|f\|_{Lip}=|f|_{Lip}+\|f\|_\infty.
\]

Let $P(x,dy)$ be a {\em Markovian kernel} on $S$ and set $(Pf)(x) = \int_S
f(y)P(x,dy)$. We now recall some definitions and results regarding Markov
chains with values in $S$. 

\begin{definition}\label{weak-Feller}
We say that $P$ is \emph{weak Feller} if $(Pf)(x)=\int_S f(y)P(x,dy)$
defines a linear operator $P:C(S)\to C(S)$. A Markov chain with a weak
Feller transition kernel is said a weak Feller Markov chain.
\end{definition}

\begin{remark}\label{remark-existence-invariant-measure}
If $P$ is weak Feller, then the sequence $(P^n)_{n\geq 1}$ of
operators from $C(S)$ to $C(S)$ is uniformly bounded with respect to
$\|\cdot\|_\infty$: indeed, we simply have
\begin{multline*}
\|P^nf\|_\infty=\sup_{x\in S} |P^nf(x)| = 
\sup_{x\in S} \Big|\int_S f(y)P^n(x,dy)\Big|
\\
\leq \sup_{x\in S} \Big( \int_S \sup_{y\in S} |f(y) | P^n(x,dy) \Big) =
\sup_{y\in S} |f(y)|=\|f\|_\infty .
\end{multline*}
Moreover, the existence of at least one invariant probability measure
for $P$ is easily shown.  In fact, the set of probability measures
$\mathcal{P}(S)$ on $S$, endowed with the topology of the weak
convergence, is a compact convex set.  In addition, the adjoint
operator of $P$, namely
\[
P^* : \mathcal{P}(S) \to \mathcal{P}(S) , 
\qquad (P^*\nu)(B) = \int_S \nu(dx) P(x,B) ,
\] 
is continuous on $\mathcal{P}(S)$ (since $P$ is weak Feller).  Then,
the existence of an invariant probability measure $\pi$ is a
consequence of the Brouwer's fixed-point theorem.
\end{remark}

\begin{definition}\label{def-star}
We say that $P$ is \emph{semi-contractive} or a \emph{semi-contraction} on 
$Lip(S)$ if it maps $Lip(S)$ into itself and there exists a constant
$\gamma<1$ such that
$$
|Pf|_{Lip}\leq \gamma |f|_{Lip}
$$
for each $f\in Lip(S)$.
\end{definition}

We now give the definition of \emph{compact Markov chain} (see
\cite[Chapter~3]{Nor72} for a general exposition of the theory of
these processes, and \cite{Doeb37} for the beginning of this theory):

\begin{definition}\label{def:compact-Markov-process}
We say that $P$ is a \emph{Doeblin-Fortet operator} if it is weak
Feller, a bounded operator from $(Lip(S),\|\cdot\|_{Lip})$ into itself
and there are finite constants $n_0\geq 1$, $\gamma<1$ and $R\geq 0$
such that
$$
|P^{n_0}f|_{Lip}\leq \gamma |f|_{Lip} + R \|f\|_\infty,
$$ for each $f\in Lip(S)$. A Markov chain with a Doeblin-Fortet
operator on a compact set $S$ is called \emph{compact Markov chain} (or
\emph{process}).
\end{definition}
Note that the Doeblin-Fortet operator, the weak Feller property and
the semi-contraction may also be defined for not-compact state space.
In general, a compact Markov process is a Doeblin-Fortet process in a
compact state space. In our framework, since $S$ is compact, the two
concepts coincide and the following result follows immediately:

\begin{lemma}\label{lem:cMP}
If $P$ is weak Feller and a semi-contractive operator on $Lip(S)$, then $P$
is a Doeblin-Fortet operator. In other words, a weak Feller Markov
chain such that its transition kernel is semi-contractive on $Lip(S)$ is a
compact Markov chain.
\end{lemma}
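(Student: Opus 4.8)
The plan is to verify directly the three requirements in Definition~\ref{def:compact-Markov-process}, taking $n_0=1$ and $R=0$, and then to conclude by the definition of a compact Markov chain, using that $S$ is compact. The weak Feller property is already among the hypotheses, so only the boundedness of $P$ on $(Lip(S),\|\cdot\|_{Lip})$ and the Doeblin--Fortet inequality have to be checked.

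First I would record the elementary bound $\|Pf\|_\infty\le\|f\|_\infty$ valid for every $f\in C(S)$ and indeed for any Markovian kernel, since each $P(x,\cdot)$ is a probability measure; this is precisely the computation carried out in Remark~\ref{remark-existence-invariant-measure}. Next, by the semi-contractivity hypothesis (Definition~\ref{def-star}), $P$ maps $Lip(S)$ into itself and there is a constant $\gamma<1$ with $|Pf|_{Lip}\le\gamma|f|_{Lip}$ for all $f\in Lip(S)$. Combining the two estimates gives
\[
\|Pf\|_{Lip}=|Pf|_{Lip}+\|Pf\|_\infty\le\gamma|f|_{Lip}+\|f\|_\infty\le\|f\|_{Lip},
\]
so $P$ is a bounded operator (of norm at most $1$) from $(Lip(S),\|\cdot\|_{Lip})$ into itself. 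This is the only point that requires an argument at all, and it is immediate once the seminorm and sup-norm estimates are in hand, so there is no genuine obstacle in the proof.

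Finally, the Doeblin--Fortet inequality holds with $n_0=1$, the same $\gamma<1$, and $R=0$, because $|Pf|_{Lip}\le\gamma|f|_{Lip}=\gamma|f|_{Lip}+R\|f\|_\infty$ for every $f\in Lip(S)$. Hence $P$ is a Doeblin--Fortet operator, and since $S$ is a compact metric space, a Markov chain with transition kernel $P$ is, by Definition~\ref{def:compact-Markov-process}, a compact Markov chain. The second sentence of the lemma is merely a restatement of the first in terms of the chain rather than of the operator, so nothing further is needed.
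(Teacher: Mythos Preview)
Your proof is correct and matches the paper's approach: the paper does not give a separate proof, stating just before the lemma that ``the following result follows immediately,'' and your verification of Definition~\ref{def:compact-Markov-process} with $n_0=1$ and $R=0$ is exactly the immediate check that is intended.
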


\begin{definition}\label{def-irreducible}
We say that $P$ is \emph{irreducible} and \emph{aperiodic} if 
\begin{equation*}
Pf=e^{i\theta}f,\quad \text{with } 
\theta\in\mathbb{R},\quad f\in Lip(S)
\Rightarrow 
e^{i\theta}=1\quad\mbox{and}\quad f=\mbox{constant}.
\end{equation*}
A Markov chain with an irreducible and aperiodic transition kernel is
said an irreducible and aperiodic Markov chain.
\end{definition}

Under the hypotheses of the Theorem of Ionescu-Tulcea and Marinescu in
\cite{IonMar50}, the spectral radius of $P$ is $1$, the set of
eigenvalues of $P$ of modulus $1$ has only a finite number of elements
and each relative eigenspace is finite dimensional. This theorem can
always be applied to a compact Markov chain (see
\cite[Theorem~3.3.1]{Nor72}).  More specifically, every compact Markov
chain has $d$ disjoint closed sets, called \emph{ergodic sets},
contained in its compact state space $S$.  These sets are both the
support of the base of the ergodic invariant probability measures, and
the support of a base of the eigenspaces related to the eigenvalues of
modulus $1$ (see \cite[Theorem~3.4.1]{Nor72}).  In addition, each of
this ergodic set may be subdivided into $p_j$ closed disjoint
subsets. The number $p_j$ is the period of the $j$-th irreducible
component, and the ergodic subdivision gives the support of the
eigenfunctions related to the $p_j$ roots of $1$ (see
\cite[Theorem~3.5.1]{Nor72}). Then, as also explained in
\cite[\S~3.6]{Nor72}, there are not other eigenvalues of modulus $1$
except $1$ (aperiodicity) and not other eigenfunctions except the
constant for the eigenvalue equal to $1$ (irriducibility) if and only
if the compact Markov chain has but one ergodic kernel, and this
kernel has period $1$.  In other words, the following result holds
true:

\begin{theorem}\label{th-irreducible}
Let $\psi = (\psi_n)_{n\geq 0}$ be a compact Markov chain and let
$\pi$ an invariant probability measure with respect to its transition
kernel. If $\psi = (\psi_n)_{n\geq 0}$ converges in distribution to
$\pi$, whatever is its initial distribution, then $\pi$ is the unique
invariant probability measure and $\psi$ is irreducible and aperiodic.
\end{theorem}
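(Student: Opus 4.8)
\section*{Proof proposal for Theorem~\ref{th-irreducible}}

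The plan is to read off both conclusions directly from the convergence hypothesis by testing it against well-chosen initial laws, in particular the Dirac masses $\delta_x$, $x\in S$, which are themselves admissible initial distributions. First, uniqueness of $\pi$ is immediate: if $\pi'$ is any invariant probability measure for $P$, then the chain started from $\pi'$ has law $\pi'$ at every time $n$, so its laws converge (trivially) to $\pi'$; by hypothesis they also converge to $\pi$, and since weak limits of probability measures on the metrizable space $S$ are unique, $\pi'=\pi$. Hence $\pi$ is the unique invariant probability measure.

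For irreducibility and aperiodicity in the sense of Definition~\ref{def-irreducible}, suppose $f\in Lip(S)$ is not identically zero and satisfies $Pf=e^{i\theta}f$ for some $\theta\in\mathbb{R}$; I must show $e^{i\theta}=1$ and that $f$ is constant. Since $S$ is compact, $f$ is bounded and continuous, so for any initial distribution $\nu$ the convergence $\mathcal{L}(\psi_n)\to\pi$ gives $E_\nu[f(\psi_n)]\to\int_S f\,d\pi$ (apply weak convergence to the real and imaginary parts of $f$). On the other hand, iterating the eigenrelation gives $P^nf=e^{in\theta}f$, whence $E_\nu[f(\psi_n)]=\int_S (P^nf)\,d\nu=e^{in\theta}\int_S f\,d\nu$. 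Picking $x_0\in S$ with $f(x_0)\neq 0$ and taking $\nu=\delta_{x_0}$, we obtain $e^{in\theta}\to L:=\big(\int_S f\,d\pi\big)/f(x_0)$, and $|L|=\lim_n|e^{in\theta}|=1$, so $L\neq 0$; passing to the limit in $e^{i(n+1)\theta}=e^{i\theta}e^{in\theta}$ and dividing by $L$ forces $e^{i\theta}=1$. Then $Pf=f$, so $E_\nu[f(\psi_n)]=\int_S f\,d\nu$ for every $n$ and every $\nu$; letting $n\to\infty$ gives $\int_S f\,d\nu=\int_S f\,d\pi$ for all $\nu$, and taking $\nu=\delta_x$ for each $x\in S$ yields $f(x)=\int_S f\,d\pi$, i.e.\ $f$ is constant. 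This is exactly the condition in Definition~\ref{def-irreducible}, so $\psi$ is irreducible and aperiodic.

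The argument is short and there is no serious obstacle; the only points that need care are the legitimacy of testing the hypothesis against arbitrary (in particular Dirac) initial distributions — guaranteed by the phrase ``whatever is its initial distribution'' — and the elementary remark that a sequence on the unit circle whose consecutive terms differ by the fixed factor $e^{i\theta}$ can converge only if $e^{i\theta}=1$. One could instead route the proof through the structure theory recalled just above the statement: uniqueness of $\pi$ precludes a second ergodic set, and if the unique ergodic set had period $p\geq 2$ it would split into disjoint closed cyclic classes $E_0,\dots,E_{p-1}$, so that starting in $E_0$ the law at time $n$ is supported in $E_{n\bmod p}$; applying the Portmanteau theorem to the closed sets $E_j$ along the residue classes would force $\pi$ to be supported on each $E_j$, which is impossible. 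In that alternative route the main obstacle is precisely this separation/Portmanteau step, which the direct argument above sidesteps.
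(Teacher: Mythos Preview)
Your proof is correct and takes a genuinely different route from the paper. The paper does not give a self-contained proof of this theorem: it is stated as a consequence of the structure theory for compact Markov chains drawn from Norman's monograph (Theorems~3.4.1 and~3.5.1 and \S3.6 there), via the correspondence between peripheral eigenvalues/eigenfunctions of $P$ and the ergodic decomposition and cyclic subdivision of $S$. In that framework, convergence in distribution to a single $\pi$ from every starting law rules out a second ergodic set and a nontrivial period, which is exactly the alternative route you sketch in your last paragraph.

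Your main argument instead bypasses the structure theory entirely and works directly at the level of the eigenrelation, using only weak convergence tested against Dirac initial laws and the elementary observation that a convergent sequence of the form $(e^{in\theta})_n$ forces $e^{i\theta}=1$. This is more elementary and fully self-contained; it also makes transparent that the Doeblin--Fortet property is not used in the argument beyond the compactness of $S$ (which guarantees boundedness of $f$). The paper's approach, by contrast, buys a richer picture---it explains \emph{why} multiple eigenvalues of modulus $1$ would arise, namely from multiple ergodic components or periodicity---at the cost of invoking a substantial external reference. One small remark: the paper defines $Lip(S)$ as real-valued, which would already force $e^{i\theta}\in\{-1,1\}$ for nonzero $f$; your argument covers the complex case as well, which is the natural reading of Definition~\ref{def-irreducible} and matches the Guivarc'h--Hardy setting.
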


We now note that, if $P$ is Doeblin-Fortet, irreducible and aperiodic,
then it satisfies all the conditions given in \cite[D\'efinition
  0]{Guiv88} and \cite[D\'efinition 1]{Guiv88}.  Therefore, it has a
{\em unique} invariant probability measure $\pi$ and, for any $f\in
Lip(S\times S)$, there exists a unique (up to a constant) function
$u_f \in Lip(S)$ such that
\[
u_f(x) - Pu_f(x) =  
\int_S f(x,y)P(x,dy) - \int_S \int_S f(x,y)P(x,dy) \pi(dx).
\]
By means of this function $u_f$, it is possible to define the (unique)
function \( f'(x,y) = f(x,y)+u_f(y)-u_f(x) \) so that we have
\begin{equation*}
m(f) = \int_S \int_S f(x,y)P(x,dy) \pi(dx)=
\int_S \int_S f'(x,y)P(x,dy) \pi(dx) = m(f').
\end{equation*}
In addition, we may define the quantity $\sigma^2(f) \geq 0$ as (see
{\cite[Eq.~(6)]{Guiv88}})
\begin{multline}\label{eq-def-sigma}
\sigma^2(f) = 
\int_S \int_S \big[ f'(x,y)-m(f') \big]^2 P(x,dy) \pi(dx)
\\
= 
\int_S \int_S \big[ f(x,y)-m(f)+u_f(y)-u_f(x) \big]^2 P(x,dy) \pi(dx).
\end{multline}

Finally, we have the following convergence result:

\begin{theorem}[{\cite[Th\'eor\'eme~1 and Th\'eor\'eme~2]{Guiv88}}] 
\label{CLT-Markov}
Let $\psi=(\psi_n)_{n\geq 0}$ be an irreducible and aperiodic compact
Markov chain and denote by $\pi$ its unique invariant probability
measure.  Let $f\in Lip(S\times S)$ such that $m(f)=0$ and
$\sigma^2(f)>0$.  Then, setting $S_N(f)=\sum_{n=0}^{N-1}
f(\psi_n,\psi_{n+1})$, we have
\begin{equation*}
\frac{S_N(f)}{\sqrt{N}} 
\mathop{\longrightarrow}^{d}_{N\to\infty} 
\mathcal{N}\big(0,\sigma^2(f)\big),
\end{equation*}
and
\begin{equation*}
\sup_{t}
\Big| P\big(S_N(f)<t\sqrt{N}\big)- 
\mathcal{N}\big(0,\sigma^2(f)\big)(-\infty, t)\Big|  
=O(1/\sqrt{N}).
\end{equation*}
\end{theorem}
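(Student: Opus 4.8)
The plan is to follow the classical martingale--approximation route (Gordin's method), exploiting the quasi-compactness of $P$ on $Lip(S)$ that is already available here, and then to upgrade the bare central limit theorem to the Berry--Esseen rate by a spectral perturbation argument. First I would set $F(x)=\int_S f(x,y)P(x,dy)$, so that $\int_S F\,d\pi=m(f)=0$. Since $P$ is a Doeblin--Fortet operator which is moreover irreducible and aperiodic, the Ionescu-Tulcea--Marinescu theorem gives that $P$ is quasi-compact on $(Lip(S),\|\cdot\|_{Lip})$, that its unique eigenvalue of modulus one is $1$, and that this eigenvalue is simple with constant eigenfunction. Hence $I-P$ is boundedly invertible on the closed hyperplane $\{g\in Lip(S)\colon \int_S g\,d\pi=0\}$, and since $F$ lies in this hyperplane there is a unique (up to an additive constant) $u\in Lip(S)$ solving $u-Pu=F$; this $u$ is exactly the function $u_f$ introduced before the statement, and it satisfies $u=F+Pu$.

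Next I would carry out the martingale--coboundary decomposition. Put $D_{n+1}:=f(\psi_n,\psi_{n+1})+u(\psi_{n+1})-u(\psi_n)$, i.e.\ $D_{n+1}=f'(\psi_n,\psi_{n+1})-m(f')$ in the notation used above. Using the Markov property and $u=F+Pu$ one gets $E[D_{n+1}\mid \mathcal{G}_n]=F(\psi_n)+(Pu)(\psi_n)-u(\psi_n)=0$, where $\mathcal{G}_n=\sigma(\psi_0,\dots,\psi_n)$, so $(D_n)_{n\ge1}$ is a sequence of martingale increments. Summing and telescoping the coboundary,
\[
S_N(f)=\sum_{n=0}^{N-1}D_{n+1}+u(\psi_0)-u(\psi_N),
\]
and the last two terms are bounded by $2\|u\|_\infty$, hence negligible after division by $\sqrt N$.

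For the central limit theorem, when $\psi_0$ has law $\pi$ the process $(D_n)$ is strictly stationary and ergodic (ergodicity of $\pi$ follows from irreducibility and aperiodicity), square integrable because $f\in Lip(S\times S)$ and $u\in Lip(S)$ on the compact $S$, with $E_\pi[D_1^2]=\sigma^2(f)$ by the very definition \eqref{eq-def-sigma}. The martingale central limit theorem for stationary ergodic increments then yields $N^{-1/2}\sum_{n=1}^N D_n\xrightarrow{d}\mathcal N(0,\sigma^2(f))$, and the decomposition above transfers this to $S_N(f)$. For an arbitrary initial law one couples $\psi$ with its stationary version: geometric ergodicity of $P$ (again a consequence of quasi-compactness) makes the two chains coincide after an a.s.\ finite time, which does not affect the limit.

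Finally, the Berry--Esseen rate, which is the delicate point. I would use the Nagaev--Guivarc'h spectral method: for $t\in\mathbb R$ consider the twisted operator $P_tg(x)=\int_S e^{itf(x,y)}g(y)P(x,dy)$ on $Lip(S)$. The map $t\mapsto P_t$ is analytic, $P_0=P$, and by analytic perturbation of the isolated simple eigenvalue $1$ there is a neighbourhood of $0$ on which $P_t$ has a dominant simple eigenvalue $\lambda(t)$ with $\lambda(0)=1$, $\lambda'(0)=i\,m(f)=0$, $\lambda''(0)=-\sigma^2(f)$, and a spectrally separated complementary part of radius $\rho<1$. Iterating gives $E[e^{itS_N(f)}]=\lambda(t)^N c_N(t)+O(\rho^N)$ with $c_N$ bounded, uniformly for $t$ in compact sets, so $E[e^{i(t/\sqrt N)S_N(f)}]\to e^{-\sigma^2(f)t^2/2}$ with error $O(N^{-1/2})$ on such sets; plugging this into the Esseen smoothing inequality produces the stated uniform bound. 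The main obstacle is exactly this step: setting up the analytic perturbation and, above all, controlling the characteristic function away from the origin (the non-lattice estimate $\sup_{|t|\le T}\|P_t^n\|\to 0$ for each fixed $T$, where irreducibility and aperiodicity re-enter decisively) uniformly enough to close the Esseen inequality; the earlier steps are by comparison the standard martingale toolbox.
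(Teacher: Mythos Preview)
The paper does not prove this theorem at all: it is quoted verbatim from \cite{Guiv88} (Th\'eor\`eme~1 and Th\'eor\`eme~2) and used as a black box in the proof of Theorem~\ref{CLT-Markov-linear}. So there is no ``paper's own proof'' to compare against.

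That said, your sketch is essentially the proof of the cited result. For the Berry--Esseen part you reproduce the Nagaev--Guivarc'h spectral method exactly as in \cite{Guiv88}: analytic perturbation of the simple dominant eigenvalue of the twisted transfer operator $P_t$ on $Lip(S)$, Taylor expansion of $\lambda(t)$ at $0$, and Esseen's smoothing inequality. For the bare CLT you take a slightly different route, using Gordin's martingale--coboundary decomposition and the stationary-ergodic martingale CLT, whereas \cite{Guiv88} deduces the CLT directly from the same spectral expansion (pointwise convergence of characteristic functions). Both routes are standard and equivalent in this setting; your version has the mild advantage of making the role of $u_f$ and of the variance formula \eqref{eq-def-sigma} completely transparent, at the cost of needing the spectral machinery anyway for the rate. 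The one point you correctly flag as delicate---the control of $\|P_t^n\|$ for $t$ bounded away from $0$---is precisely where irreducibility and aperiodicity (Definition~\ref{def-irreducible}) are used in \cite{Guiv88}, so your identification of the main obstacle is accurate.
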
 

Now, let us specialize our assumptions taking as $S$ a compact subset
of $\mathbb{R}^k$. Therefore, in the sequel we will use the boldface
in order to highlight the fact the we are working with vectors.
 
\begin{definition}[``Linearity'' condition]\label{def-lin-mod} 
We say that $P$ and $\bs{f}: S\times S
  \to \mathbb{R}^d$ form a \emph{linear model} if $\bs{f}$ is linear
  (in $\bs{x}$ and $\bs{y}$) with \(m(\bs{f})= \bs{0}\) and the
  function
$$
(P\bs{y})(\bs{x})= \int_S\bs{y}P(\bs{x},\bs{dy})
$$
is linear (in $\bs{x}$).
\end{definition}

\begin{remark}\label{rem-lin-mod}
\rm 
Denote by $\bs{p_0} = \int_S \bs{x} \pi(\bs{dx})$ the mean value under
the invariant probability measure $\pi$ of $P$. If $P$ and $\bs{f}$
form a linear model, then there exist two matrices $A_{1},A_{2} \in
\mathbb{R}^{d\times k}$ such that
\begin{equation}\label{def-matriciA}
\bs{f} (\bs{x},\bs{y}) = A_{1}(\bs{x}-\bs{p_0}) + A_{2}(\bs{y}-\bs{p_0})
\end{equation}
and a square matrix $A_{P} \in \mathbb{R}^{k\times k}$ such that
\begin{equation}\label{def-matriceA_P}
(P(\bs{y}-\bs{p_0})) (\bs{x}) = \int_S (\bs{y}-\bs{p_0}) P(\bs{x},\bs{dy}) = 
A_{P} (\bs{x}-\bs{p_0}).
\end{equation}
Indeed, if \((P\bs{y})(\bs{x}) = A_{P}\bs{x} + \bs{b}\), using that $\pi$
is invariant with respect to $P$, we obtain
\[
\bs{p_0} = \int_S \bs{y} \pi(\bs{dy}) = 
\int_S \int_S \bs{y} P(\bs{x},\bs{dy}) \pi(\bs{dx}) = 
\int_S [A_{P}\bs{x} + \bs{b}]\pi(\bs{dx}) = A_{P}\bs{p_0} + \bs{b},
\]
and hence $(P(\bs{y}-\bs{p_0})) (\bs{x}) = A_{P} (\bs{x}-\bs{p_0})$.
Moreover, if $\bs{f}(\bs{x},\bs{y}) = A_{1}\bs{x}+A_{2}\bs{y}+\bs{b}$,
then
\begin{align*}
m(A_{1}\bs{x}+ A_{2}\bs{y} + \bs{b}) 
& = 
\int_S P(\bs{x},\bs{dy}) \int_S
A_{1}\bs{x} \pi(\bs{dx})
+
\int_S A_{2}\bs{y}  \pi(\bs{dy}) + \bs{b}
\\
& = (A_{1}+ A_{2})\bs{p_0} + \bs{b}
\end{align*}
and hence, if $m(\bs{f})= \bs{0}$, we obtain $\bs{f}=
A_{1}(\bs{x}-\bs{p_0}) + A_{2}(\bs{y}-\bs{p_0})$.
\end{remark}

\begin{theorem}\label{CLT-Markov-linear}
Let $\psi=(\psi_n)_{n\geq 0}$ be an irreducible and aperiodic compact
Markov chain and denote by $P$ its transition kernel and by $\pi$ its
unique invariant measure. Assume that $P$ and $\bs{f}$ form a linear
model and let $A_1,\,A_2$ and $A_P$ defined as in \eqref{def-matriciA}
and in \eqref{def-matriceA_P}. Then, setting
$\bs{S_N}(\bs{f})=\sum_{n=0}^{N-1}
\bs{f}(\bs{\psi_n},\bs{\psi_{n+1}})$, we have
\begin{equation*}
\frac{\bs{S_N}(\bs{f})}{\sqrt{N}} 
\mathop{\longrightarrow}^{d}_{N\to\infty} 
\mathcal{N}\big(\bs{0},\Sigma^2\big),
\end{equation*}
where
\[
\Sigma^2 = D_{1} \Sigma^2_{\pi} D_{1}^\top+D_{1} \Sigma^2_{\pi} A_{P}^\top D_{2}^\top +
D_{2} A_{P} \Sigma^2_{\pi} D_{1}^\top+D_{2} \Sigma^2_{\pi} D_{2}^\top ,
\]
with 
$$
\Sigma^2_\pi = \int_S (\bs{x}-\bs{p_0})(\bs{x}-\bs{p_0})^\top
\pi(\bs{dx})
$$ 
(the variance-covariance matrix under the invariant
probability measure $\pi$),
\[
D_{1} = A_{1}-D_0\quad\mbox{and}\quad D_{2} = A_{2}+D_0,
\]
where $D_0 = (A_{1}+ A_{2} A_{P})(Id - A_{P})^{-1} $. 
Moreover, for any $\bs{c}\in\mathbb{R}^k$,
\begin{equation*}
\sup_{t}
\Big| P\big(\bs{S_N}(\bs{c}^\top \bs{f})<t\sqrt{N}\big)- 
\mathcal{N}\big(0,\bs{c}^\top\Sigma^2\bs{c}\big)(-\infty, t)\Big|  
=O(1/\sqrt{N}).
\end{equation*}
\end{theorem}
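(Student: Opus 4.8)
The plan is to reduce the vector statement to the one--dimensional central limit theorem of Theorem~\ref{CLT-Markov} through the Cram\'er--Wold device, the only substantial work being the explicit resolution of the Poisson equation that enters the variance formula \eqref{eq-def-sigma}. Fix $\bs{c}\in\mathbb{R}^d$ and set $f=\bs{c}^\top\bs{f}\in Lip(S\times S)$; since $\bs{f}$ is linear with $m(\bs{f})=\bs{0}$ (Definition~\ref{def-lin-mod} and Remark~\ref{rem-lin-mod}), we have $m(f)=0$, so Theorem~\ref{CLT-Markov} applies whenever $\sigma^2(f)>0$, and in that case it also yields the Berry--Esseen bound claimed for $\bs{S_N}(\bs{c}^\top\bs{f})$. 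It is convenient to write $g_{\bs{v}}(\bs{x})=\bs{v}^\top(\bs{x}-\bs{p_0})$, so that $Pg_{\bs{v}}=g_{A_P^\top\bs{v}}$ by \eqref{def-matriceA_P}.

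First I would observe that $Id-A_P$ is invertible: were $A_P^\top$ to have an eigenvector $\bs{v}$ for a modulus-one eigenvalue $e^{i\theta}$ with $g_{\bs{v}}$ non-constant, then $Pg_{\bs{v}}=e^{i\theta}g_{\bs{v}}$ would contradict Definition~\ref{def-irreducible}; since the power-boundedness of $P$ on $C(S)$ forces the spectral radius of $A_P$ on the linear span of $S-\bs{p_0}$ to be $\le 1$, and on the orthogonal complement $A_P$ is irrelevant for $\bs{f}$ and may be taken equal to $0$, the matrix $D_0=(A_1+A_2A_P)(Id-A_P)^{-1}$ is well defined. Using \eqref{def-matriceA_P} one computes $\int_S\bs{f}(\bs{x},\bs{y})P(\bs{x},\bs{dy})=(A_1+A_2A_P)(\bs{x}-\bs{p_0})$, whereas the affine map $\bs{u}(\bs{x})=D_0(\bs{x}-\bs{p_0})$ satisfies $\bs{u}(\bs{x})-P\bs{u}(\bs{x})=D_0(Id-A_P)(\bs{x}-\bs{p_0})=(A_1+A_2A_P)(\bs{x}-\bs{p_0})$; hence $u_f=\bs{c}^\top\bs{u}\in Lip(S)$ is, up to an additive constant, the unique solution of the Poisson equation associated with $f$. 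Consequently the centred function $f'(\bs{x},\bs{y})=f(\bs{x},\bs{y})+u_f(\bs{y})-u_f(\bs{x})$ equals $\bs{c}^\top\big(D_1(\bs{x}-\bs{p_0})+D_2(\bs{y}-\bs{p_0})\big)$ with $D_1=A_1-D_0$ and $D_2=A_2+D_0$, and by construction $\int_S f'(\bs{x},\bs{y})P(\bs{x},\bs{dy})=0$.

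Since $m(f')=0$, formula \eqref{eq-def-sigma} reduces to $\sigma^2(f)=\int_S\int_S\big[f'(\bs{x},\bs{y})\big]^2P(\bs{x},\bs{dy})\pi(\bs{dx})$. Expanding the square into $\big[\bs{c}^\top D_1(\bs{x}-\bs{p_0})\big]^2+2\big[\bs{c}^\top D_1(\bs{x}-\bs{p_0})\big]\big[\bs{c}^\top D_2(\bs{y}-\bs{p_0})\big]+\big[\bs{c}^\top D_2(\bs{y}-\bs{p_0})\big]^2$ and integrating term by term, the first term gives $\bs{c}^\top D_1\Sigma^2_\pi D_1^\top\bs{c}$; the third gives $\bs{c}^\top D_2\Sigma^2_\pi D_2^\top\bs{c}$ because $\pi$ is $P$-invariant; and, using $\int_S(\bs{y}-\bs{p_0})P(\bs{x},\bs{dy})=A_P(\bs{x}-\bs{p_0})$, the cross term gives $2\bs{c}^\top D_1\Sigma^2_\pi A_P^\top D_2^\top\bs{c}$, which as a scalar also equals $\bs{c}^\top D_2A_P\Sigma^2_\pi D_1^\top\bs{c}$ by symmetry of $\Sigma^2_\pi$. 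Summing the four pieces produces precisely $\sigma^2(f)=\bs{c}^\top\Sigma^2\bs{c}$ with $\Sigma^2$ as in the statement.

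It then remains to conclude. If $\bs{c}^\top\Sigma^2\bs{c}>0$, Theorem~\ref{CLT-Markov} gives that $\bs{S_N}(\bs{c}^\top\bs{f})/\sqrt{N}$ converges in distribution to $\mathcal{N}(0,\bs{c}^\top\Sigma^2\bs{c})$ with the stated $O(1/\sqrt{N})$ rate; if $\bs{c}^\top\Sigma^2\bs{c}=0$, then the above integrand vanishes $\pi(\bs{dx})P(\bs{x},\bs{dy})$-a.e., so $\bs{S_N}(\bs{c}^\top\bs{f})=\bs{c}^\top\bs{u}(\bs{\psi_0})-\bs{c}^\top\bs{u}(\bs{\psi_N})$ up to an a.s.\ negligible term, hence is bounded and $\bs{S_N}(\bs{c}^\top\bs{f})/\sqrt{N}\to0$, which matches the degenerate Gaussian $\mathcal{N}(0,0)$. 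In all cases $\bs{S_N}(\bs{c}^\top\bs{f})/\sqrt{N}$ converges in law to $\mathcal{N}(0,\bs{c}^\top\Sigma^2\bs{c})$ for every $\bs{c}\in\mathbb{R}^d$, and the Cram\'er--Wold theorem upgrades this to the convergence of $\bs{S_N}(\bs{f})/\sqrt{N}$ to $\mathcal{N}(\bs{0},\Sigma^2)$. I expect the two slightly delicate points to be the invertibility of $Id-A_P$ (settled above via the spectral constraints imposed by aperiodicity and irreducibility) and the careful bookkeeping matching $\sigma^2(f)$ with the four-term matrix $\Sigma^2$; everything else is a direct transfer of Theorem~\ref{CLT-Markov} through the linearity of the model.
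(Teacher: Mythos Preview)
Your proposal is correct and follows essentially the same route as the paper: solve the Poisson equation explicitly with the linear function $\bs{u_f}(\bs{x})=D_0(\bs{x}-\bs{p_0})$, identify $f'=\bs{c}^\top(D_1(\bs{x}-\bs{p_0})+D_2(\bs{y}-\bs{p_0}))$, expand $\sigma^2(f)$ into the four matrix terms, and conclude via Cram\'er--Wold and Theorem~\ref{CLT-Markov}. Your justification of the invertibility of $Id-A_P$ through Definition~\ref{def-irreducible} differs slightly from the paper's appeal to the semi-contractive property (Definition~\ref{def-star}), and you additionally treat the degenerate case $\bs{c}^\top\Sigma^2\bs{c}=0$, which the paper omits; both are minor refinements rather than a different strategy.
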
 

\begin{proof}
As a consequence of Definition~\ref{def-star}, the spectral radius
of $A_{P}$ must be less than one, and hence $Id - A_{P}$ is
invertible. Therefore, we may define 
\[
\bs{u_f}(\bs{x}) = D_0(\bs{x}-\bs{p_0}) = 
(A_{1}+ A_{2} A_{P})(Id - A_{P})^{-1} (\bs{x}-\bs{p_0}) ,
\]
so that we have 
\begin{align*}
\bs{u_f} (\bs{x}) - (P\bs{u_f})(\bs{x}) = & 
(A_{1}+ A_{2} A_{P})(Id - A_{P})^{-1} (\bs{x}-\bs{p_0})
\\
& \qquad - 
(A_{1}+ A_{2} A_{P})(Id - A_{P})^{-1} A_{P} (\bs{x}-\bs{p_0})
\\
= & 
(A_{1}+ A_{2} A_{P}) (\bs{x}-\bs{p_0})
\\
= &
A_{1} (\bs{x}-\bs{p_0}) \int_S P(\bs{x},\bs{dy}) 
+ A_{2} 
\int_S (\bs{y}-\bs{p_0}) P(\bs{x},\bs{dy}) 
\\
= &
\int_S \bs{f}(\bs{x},\bs{y}) P(\bs{x},\bs{dy}) - \bs{0} \\
= &
\int_S \bs{f}(\bs{x},\bs{y}) P(\bs{x},\bs{dy}) - 
\int_S \bs{f}(\bs{x},\bs{y}) P(\bs{x},\bs{dy}) \pi(\bs{dx}) .
\end{align*}
We immediately get that the function \( \bs{g}(\bs{x},\bs{y}) =
\bs{f}(\bs{x},\bs{y}) +\bs{u_f} (\bs{y}) -\bs{u_f} (\bs{x}) \) is
linear and it may be written as \( \bs{g}(\bs{x},\bs{y}) =
D_{1}(\bs{x}-\bs{p_0}) + D_{2}(\bs{y}-\bs{p_0})\).  Taking into
account that
\[
\int_S \int_S (\bs{y}-\bs{p_0}) P(\bs{x},\bs{dy}) \pi(\bs{dx}) = 
\int_S (\bs{y}-\bs{p_0}) \pi(\bs{dy}) = \bs{0},
\] 
\begin{equation}\label{eq:var-cov_psi1}
\int_S \int_S (\bs{y}-\bs{p_0})(\bs{y}-\bs{p_0})^\top P(\bs{x},\bs{dy}) 
\pi(\bs{dx}) = 
\int_S (\bs{y}-\bs{p_0})(\bs{y}-\bs{p_0})^\top \pi(\bs{dy}) = \Sigma^2_{\pi}
\end{equation}
and
\begin{equation}\label{eq:var-cov_psi0psi1}
\int_S \int_S (\bs{y}-\bs{p_0})(\bs{x}-\bs{p_0})^\top P(\bs{x},\bs{dy}) 
\pi(\bs{dx}) = 
A_{P} 
\int_S (\bs{x}-\bs{p_0})(\bs{x}-\bs{p_0})^\top \pi(\bs{dx}) = 
A_{P} \Sigma^2_{\pi},
\end{equation}
we can compute the quantity
\begin{multline*}
\int_S \int_S \bs{g}(\bs{x},\bs{y}) 
\bs{g}(\bs{x},\bs{y})^\top
P(\bs{x},\bs{dy}) \bs{\pi_\psi}(\bs{dx}) 
\\
\begin{aligned}
& = \int_S \int_S \big[ D_{1}(\bs{x}-\bs{p_0})
+ D_{2}(\bs{y}-\bs{p_0}) \big] \\
& \qquad \qquad \qquad
\big[ D_{1}(\bs{x}-\bs{p_0})
+ D_{2}(\bs{y}-\bs{p_0}) \big]^\top
P(\bs{x},\bs{dy}) \bs{\pi_\psi}(\bs{dx}) 
\\
& = D_{1} \Sigma^2_{\pi} D_{1}^\top+D_{1} \Sigma^2_{\pi} A_{P}^\top D_{2}^\top +
D_{2} A_{P} \Sigma^2_{\pi} D_{1}^\top+D_{2} \Sigma^2_{\pi} D_{2}^\top 
\\
&= \Sigma^2.
\end{aligned}
\end{multline*}
By the Cram\'er-Wold device, the theorem is proven with $\Sigma^2$
given above if we prove that, for any $\bs{c}$,
\begin{equation*}
\bs{c}^\top\frac{\bs{S_N}(\bs{f})}{\sqrt{N}} 
=
\frac{\bs{S_N}(\bs{c}^\top\bs{f})}{\sqrt{N}} 
\mathop{\longrightarrow}^{d}_{N\to\infty} 
\mathcal{N}\big(0,\bs{c}^\top\Sigma\bs{c}\big).
\end{equation*}
Therefore, in order to conclude, it is enough to note that the above
convergence is a consequence of Theorem~\ref{CLT-Markov} with $f =
\bs{c}^\top\bs{f}$. Indeed, by definition $f\in Lip(S\times S)$ and
the function $u_f\in Lip(S)$ in \eqref{eq-def-sigma} may be chosen as
$ u_f = \bs{c}^\top\bs{u_f}$, so that $m(f)=0 $ and $ \sigma^2(f)=
\bs{c}^\top\Sigma\bs{c}$.
\end{proof}


\subsection{Coupling technique}
\label{sec-coupling}

The result proven in this subsection plays a relevant r\^ole in the
proof of Theorem~\ref{thm:CLT_beta<1}. Indeed, it shows that, under
suitable assumptions, two stochastic processes can be ``coupled'' in a
suitable way, preserving their respective joint distributions.  
\\

\indent Set $S^*=\{\bs{x}:\,x_i\geq 0,\; |\bs{x}|= 1\} $, that is the
standard (or probability) simplex in $\mathbb{R}^k$, and recall that
\(\{\bs{e_1}, \ldots, \bs{e_k}\}\) denotes the canonical base of
$\mathbb{R}^k$. We have the following technical lemma:

\begin{lemma}\label{lem:Py_coupling}
There exist two measurable functions $\bs{h^{(1)}},\bs{h^{(2)}}
:S^*\times S^*\times (0,1) \to \{\bs{e_1}, \ldots, \bs{e_k}\}$, such
that for any ${\bs{x} ,\bs{y} \in S^*}$
\begin{equation}\label{eq:lemcoupl_def}
\begin{aligned}
&\int_{(0,1)} \ind{\{\bs{h^{(1)}} (\bs{x} , \bs{y}, u) = \bs{e_i}\}} du = x_i, 
&& \forall i = 1, \ldots, k,
\\
&\int_{(0,1)} \ind{\{\bs{h^{(2)}} (\bs{x} , \bs{y}, u) = \bs{e_i}\}} du = y_i,  
&& \forall i = 1, \ldots, k
\end{aligned}
\end{equation}
and
\begin{equation}\label{eq:lemcoupl_coupl}
\int_{(0,1)} \ind{\{\bs{h^{(1)}} (\bs{x} , \bs{y}, u) \neq 
\bs{h^{(2)}} (\bs{x} , \bs{y}, u) \}} du 
\leq \frac{|\bs{x} -\bs{y}|}{2}. 
\end{equation}
As a consequence, we have 
\begin{equation}\label{eq:lemcoupl_coupl2}
\int_{(0,1)} | \bs{h^{(1)}} (\bs{x} , \bs{y}, u) - 
\bs{h^{(2)}} (\bs{x} , \bs{y}, u) | du 
\leq |\bs{x} -\bs{y}|. 
\end{equation}
\end{lemma}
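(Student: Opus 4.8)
The plan is to realize $\bs{h^{(1)}}$ and $\bs{h^{(2)}}$ as a \emph{maximal coupling} of the two probability vectors $\bs{x},\bs{y}$, regarded as distributions on the finite set $\{\bs{e_1},\dots,\bs{e_k}\}$, driven by a single uniform variable $u$. Write $m=m(\bs{x},\bs{y})=\sum_{i=1}^k\min(x_i,y_i)$ for the overlap mass. Since $\sum_i x_i=\sum_i y_i=1$, a one-line check gives $m=1-\tfrac12\sum_i|x_i-y_i|=1-\tfrac12|\bs{x}-\bs{y}|$ and $\sum_i(x_i-y_i)^+=\sum_i(y_i-x_i)^+=1-m$. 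I would split $(0,1)$ into the ``concordant'' block $(0,m]$, on which the two functions will coincide, and the ``discordant'' block $(m,1)$, on which they will necessarily differ; the bound \eqref{eq:lemcoupl_coupl} will then come for free, the discordance set having Lebesgue measure $1-m=|\bs{x}-\bs{y}|/2$.

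For a probability vector $\bs{z}\in S^*$ introduce the quantile map $G_{\bs{z}}:(0,1)\to\{\bs{e_1},\dots,\bs{e_k}\}$ sending $v$ to $\bs{e_i}$ for the unique $i$ with $\sum_{j<i}z_j<v\le\sum_{j\le i}z_j$, so that $\int_{(0,1)}\ind{\{G_{\bs{z}}(v)=\bs{e_i}\}}\,dv=z_i$ and the output index always carries positive weight. Define the conditional vectors $\bs{w}^{\wedge}$ with components $\min(x_i,y_i)/m$ (when $m>0$), and $\bs{w}^{+}$, $\bs{w}^{-}$ with components $(x_i-y_i)^+/(1-m)$, $(y_i-x_i)^+/(1-m)$ respectively (when $m<1$), each a genuine element of $S^*$ by the identities above. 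Then set, for $u\in(0,1)$,
\[
\bs{h^{(1)}}(\bs{x},\bs{y},u)=
\begin{cases}
G_{\bs{w}^{\wedge}}\big(u/m\big) & \text{if } u\le m,\\
G_{\bs{w}^{+}}\big((u-m)/(1-m)\big) & \text{if } u>m,
\end{cases}
\qquad
\bs{h^{(2)}}(\bs{x},\bs{y},u)=
\begin{cases}
G_{\bs{w}^{\wedge}}\big(u/m\big) & \text{if } u\le m,\\
G_{\bs{w}^{-}}\big((u-m)/(1-m)\big) & \text{if } u>m.
\end{cases}
\]
The point that makes one uniform enough is that $\bs{w}^{+}$ and $\bs{w}^{-}$ have disjoint supports ($w^+_i>0\Rightarrow x_i>y_i$, while $w^-_i>0\Rightarrow y_i>x_i$), so $\bs{h^{(1)}}$ and $\bs{h^{(2)}}$ automatically differ throughout the block $(m,1)$ and no second independent source of randomness is needed there.

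The verifications are then short. Changing variables $v=u/m$ on $(0,m]$ and $v=(u-m)/(1-m)$ on $(m,1)$ and using the marginal property of $G$ gives
\[
\int_{(0,1)}\ind{\{\bs{h^{(1)}}(\bs{x},\bs{y},u)=\bs{e_i}\}}\,du=m\,w^{\wedge}_i+(1-m)\,w^+_i=\min(x_i,y_i)+(x_i-y_i)^+=x_i,
\]
and symmetrically the $\bs{h^{(2)}}$-integral equals $\min(x_i,y_i)+(y_i-x_i)^+=y_i$, which is \eqref{eq:lemcoupl_def}; here I use $\min(a,b)+(a-b)^+=a$. Since $\bs{h^{(1)}}=\bs{h^{(2)}}$ on $(0,m]$ and, by the disjoint-support remark, $\bs{h^{(1)}}\neq\bs{h^{(2)}}$ on $(m,1)$, the set in \eqref{eq:lemcoupl_coupl} is exactly $(m,1)$, of measure $1-m=|\bs{x}-\bs{y}|/2$, proving \eqref{eq:lemcoupl_coupl}; and \eqref{eq:lemcoupl_coupl2} follows at once since $|\bs{e_i}-\bs{e_j}|$ is $0$ if $i=j$ and $2$ otherwise, so $|\bs{h^{(1)}}-\bs{h^{(2)}}|=2\,\ind{\{\bs{h^{(1)}}\neq\bs{h^{(2)}}\}}$.

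The one genuinely delicate point, and the step I would treat most carefully, is joint measurability of $\bs{h^{(1)}},\bs{h^{(2)}}$ in $(\bs{x},\bs{y},u)$. I would argue that $m(\bs{x},\bs{y})$ and all partial sums $\sum_{j\le i}z_j$ are continuous, so $\{u\le m\}$ is closed and $\{m>0\}$, $\{m<1\}$ are open; the map $(\bs{z},v)\mapsto G_{\bs{z}}(v)$ is Borel because each $\{G_{\bs{z}}(v)=\bs{e_i}\}$ is cut out by two such inequalities; and $\bs{w}^{\wedge},\bs{w}^{+},\bs{w}^{-}$ are Borel because they are continuous on the open sets $\{m>0\}$, resp.\ $\{m<1\}$, and one may assign them any fixed value of $S^*$ on the complementary closed sets, which meet each fibre $\{(\bs{x},\bs{y})\}\times(0,1)$ in a Lebesgue-null set and are therefore irrelevant to all the integrals above. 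Composing these Borel maps yields that $\bs{h^{(1)}}$ and $\bs{h^{(2)}}$ are Borel, completing the argument.
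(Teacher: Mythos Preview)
Your construction is correct and is essentially the same maximal-coupling argument as the paper's: both split $(0,1)$ into a concordant block of length $m=\sum_i\min(x_i,y_i)$ on which the two functions agree (allocating mass $\min(x_i,y_i)$ to $\bs{e_i}$) and a discordant block of length $1-m=|\bs{x}-\bs{y}|/2$ on which $\bs{h^{(1)}}$ and $\bs{h^{(2)}}$ pick up the excesses $(x_i-y_i)^+$ and $(y_i-x_i)^+$ respectively. Your quantile-map formulation and explicit measurability discussion are a slightly more polished packaging of the same interval partition the paper writes down directly.
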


\begin{proof}
Given $\bs{x},\bs{y} \in S^*$, define $\bs{\underline{xy}}= \bs{x}
\wedge \bs{y}$ so that $\underline{xy}_{i}= \min(x_{i},y_i)$. Set $u_0
= |\bs{\underline{xy}}| = \sum_{i=1}^k \min(x_{i},y_i)$, and note that
$0 \leq u_0 \leq 1$. Moreover, for any $i \in \{1,\ldots,k\}$, set
\[
\begin{aligned}
A_{\bs{\underline{xy}}\,i} & = \Big\{u \colon \sum_{j=1}^{i-1} \underline{xy}_{j} < u 
\leq 
\sum_{j=1}^{i} \underline{xy}_{j}\Big\},
\\
A_{\bs{x}\,i} & = \Big\{u \colon u_0+\sum_{j=1}^{i-1} (x_j-\underline{xy}_{j}) < u 
\leq 
u_0+ \sum_{j=1}^{i} (x_j -\underline{xy}_{j}) \Big\}
\\
A_{\bs{y}\,i} & = \Big\{u \colon u_0+\sum_{j=1}^{i-1} (y_j-\underline{xy}_{j}) < u 
\leq 
u_0+ \sum_{j=1}^{i} (y_j -\underline{xy}_{j}) \Big\}
\end{aligned}
\]
and let 
  \[
\begin{aligned}
  \bs{h^{(1)}} (\bs{x} , \bs{y}, u) = \bs{e_i} , \qquad \text{if } u \in 
  A_{\bs{\underline{xy}}\,i} \cup A_{\bs{x}\,i}\quad\mbox{and }
  \\
  \bs{h^{(2)}} (\bs{x} , \bs{y}, u) = \bs{e_i} , \qquad \text{if } u \in 
  A_{\bs{\underline{xy}}\,i} \cup A_{\bs{y}\,i} .
\end{aligned}
  \]
Observe that, since $1 = u_0 + \sum_{i=1}^{k} (x_i-\underline{xy}_{i})
= u_0 + \sum_{i=1}^{k} (y_i-\underline{xy}_{i})$, the equalities above
uniquely define $\bs{h^{(1)}},\bs{h^{(2)}}$ on the whole domain.
Moreover, since $x_i =\underline{xy}_{i} +(x_i-\underline{xy}_{i})$
and $y_i = \underline{xy}_{i}+(y_i-\underline{xy}_{i})$, then the two
conditions collected in Equation \eqref{eq:lemcoupl_def} are verified.
\\ \indent To check\eqref{eq:lemcoupl_coupl}, just note that
$\bs{h^{(1)}} (\bs{x} , \bs{y}, u) $ is equal to $\bs{h^{(2)}} (\bs{x}
, \bs{y}, u) $ on the set $\cup_i A_{\bs{\underline{xy}}\,i} =
(0,u_0)$ and we have
$$ 
\begin{aligned}
2(1 - u_0) & = \sum_{i=1}^k x_i - \sum_{i=1}^k y_i - 
2 \sum_{i=1}^k\underline{xy}_{i}
= \sum_{i=1}^k ( x_i + y_i - \min(x_i,y_i) ) - \min(x_i,y_i) 
\\
&  =
\sum_{i=1}^k \max(x_i,y_i) - \min(x_i,y_i) = |\bs{x} -\bs{y}| .
\end{aligned}
$$ 
Finally, \eqref{eq:lemcoupl_coupl2} follows immediately
from \eqref{eq:lemcoupl_coupl} since $| \bs{h^{(1)}}-\bs{h^{(2)}} |
\leq 2$.
\end{proof}

Now, we are ready to prove the following ``coupling result'':

\begin{theorem}\label{th:coupl_processes}
Let $\bs{{\psi}^{(1)}}= (\bs{{\psi}_{n}^{(1)}})_n$ and $\bs{{\psi}^{(2)}}=
(\bs{{\psi}_{n}^{(2)}})_n$ be two stochastic processes with values in
$S^*$ that evolve according to the following dynamics:
\begin{equation}\label{eq:evolution_psi:K_modified}
\begin{aligned}
\bs{{\psi}_{n+1}^{(\ell)}} & = a_0 \bs{{\psi}_{n}^{(\ell)}} 
+ a_1\bs{\xi_{n+1}^{(\ell)}} 
+ \bs{l_{n+1}^{(\ell)}}(\bs{{\psi}_{n}^{(\ell)}},\bs{\xi_{n+1}^{(\ell)}})
+ \bs{c} , 
&& \ell=1,\,2,
\end{aligned}
\end{equation}
where $a_0,a_1 \geq 0$, $\bs{c}\in\mathbb{\R}^k$,
$\bs{\xi_{n+1}^{(\ell)}}$ are random variables taking values in
$\{\bs{e_1}, \ldots, \bs{e_k}\}$ and such that
\begin{equation}\label{eq:cond-prob-g}
\begin{aligned}
P\Big(
\bs{\xi_{n+1}^{(\ell)}} = \bs{e_i} \Big| 
\bs{{\psi}_{0}^{(1)}},\, \bs{\xi_{1}^{(\ell)}},\ldots,\bs{\xi_{n}^{(\ell)}}
\Big)& =\\
P\Big(
\bs{\xi_{n+1}^{(\ell)}}
= \bs{e_i} 
\Big| \bs{{\psi}_{n}^{(\ell)}} \Big)
& = {{\psi}_{n\,i}^{(\ell)}},\quad\mbox{for } i= 1, \dots,k,
\end{aligned}
\end{equation}
and $\bs{l_{n+1}^{(\ell)}}$ are measurable functions such that
$|\bs{l^{(\ell)}_{n+1}}|=O(c_{n+1}^{(\ell)})$.  Then, there exist two
stochastic processes $\bs{\widetilde{\psi}^{(\ell)}}=
(\bs{\widetilde{\psi}_{n}^{(\ell)}})_{n\geq 0}$, $\ell=1,2$, evolving
according to the dynamics
\begin{equation}\label{eq:evolution_psi:K_modified-bis}
\begin{aligned}
\bs{\widetilde{\psi}_{n+1}^{(\ell)}} & = a_0 \bs{\widetilde{\psi}_{n}^{(\ell)}} 
+ a_1\bs{\widetilde{\xi}_{n+1}^{(\ell)}} 
+ \bs{l_{n+1}^{(\ell)}}
(\bs{\widetilde{\psi}_{n}^{(\ell)}},\bs{\widetilde{\xi}_{n+1}^{(\ell)}})
+ \bs{c} , 
&& \ell=1,\,2,
\end{aligned}
\end{equation}
with $\bs{\widetilde{\psi}_0^{(\ell)}}=\bs{\psi_{0}^{(\ell)}}$ and
\begin{equation}\label{eq:cond-prob-g-bis}
\begin{aligned}
P\Big(
\bs{\widetilde{\xi}_{n+1}^{(\ell)}} = \bs{e_i} \Big| 
\bs{{\psi}_{0}^{(1)}},\,\bs{{\psi}_{0}^{(2)}}, 
\bs{\widetilde{\xi}_{1}^{(1)}},\bs{\widetilde{\xi}_{1}^{(2)}}
\ldots,\bs{\widetilde{\xi}_{n}^{(1)}}, \bs{\widetilde{\xi}_{n}^{(2)}}
\Big)& =\\
P\Big(
\bs{\widetilde{\xi}_{n+1}^{(\ell)}}
= \bs{e_i} 
\Big| \bs{\widetilde{\psi}_{n}^{(\ell)}} \Big)
& = {\widetilde{\psi}_{n\,i}^{(\ell)}},\quad\mbox{for } i= 1, \dots,k,
\end{aligned}
\end{equation}
and such that, for any $n\geq 0$, we have 
\begin{equation}\label{dis-1}
E\Big[ |
\bs{\widetilde{\psi}_{n+1}^{(2)}} - \bs{\widetilde{\psi}_{n+1}^{(1)}} 
| 
\Big| 
\bs{\widetilde{\psi}_{m}^{(1)}} , \bs{\widetilde{\psi}_{m}^{(2)}} , m\leq n
\Big]
\leq 
(a_0+a_1)
| 
\bs{\widetilde{\psi}_{n}^{(2)}} - \bs{\widetilde{\psi}_{n}^{(1)}} 
| + O(c_{n+1}^{(1)}) + O(c_{n+1}^{(2)}). 
\end{equation} 
\end{theorem}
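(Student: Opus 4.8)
The plan is to realize both coupled chains on a single probability space, driving them with one i.i.d.\ stream of uniform random variables together with the maps $\bs{h^{(1)}},\bs{h^{(2)}}$ furnished by Lemma~\ref{lem:Py_coupling}. Concretely, I would enlarge the underlying space so that it carries the pair $(\bs{\psi_0^{(1)}},\bs{\psi_0^{(2)}})$ and an independent sequence $(U_n)_{n\geq 1}$ of i.i.d.\ random variables, uniformly distributed on $(0,1)$, set $\bs{\widetilde\psi_0^{(\ell)}}=\bs{\psi_0^{(\ell)}}$, and then define recursively, for $n\geq 0$,
\[
\bs{\widetilde\xi_{n+1}^{(\ell)}}=\bs{h^{(\ell)}}\big(\bs{\widetilde\psi_n^{(1)}},\bs{\widetilde\psi_n^{(2)}},U_{n+1}\big),\qquad
\bs{\widetilde\psi_{n+1}^{(\ell)}}=a_0\bs{\widetilde\psi_n^{(\ell)}}+a_1\bs{\widetilde\xi_{n+1}^{(\ell)}}+\bs{l_{n+1}^{(\ell)}}\big(\bs{\widetilde\psi_n^{(\ell)}},\bs{\widetilde\xi_{n+1}^{(\ell)}}\big)+\bs{c}.
\]
A preliminary observation keeps the recursion inside $S^*$, which is needed so that $\bs{h^{(\ell)}}$ can be re-applied at the next step: in the construction of $\bs{h^{(1)}}$ inside the proof of Lemma~\ref{lem:Py_coupling} the intervals defining $\{\bs{h^{(1)}}(\bs{x},\bs{y},\cdot)=\bs{e_i}\}$ collapse to the empty set when $x_i=0$ (and symmetrically for $\bs{h^{(2)}}$ when $y_i=0$), so whenever $\bs{\widetilde\xi_{n+1}^{(\ell)}}=\bs{e_i}$ we necessarily have $\widetilde\psi_{n\,i}^{(\ell)}>0$; hence each realized update is one that the original dynamics \eqref{eq:evolution_psi:K_modified} and \eqref{eq:cond-prob-g} perform with positive probability, and therefore lands in $S^*$.

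Next I would verify the distributional part of the statement. Setting $\mathcal{G}_n=\sigma\big(\bs{\psi_0^{(1)}},\bs{\psi_0^{(2)}},\bs{\widetilde\xi_1^{(1)}},\bs{\widetilde\xi_1^{(2)}},\dots,\bs{\widetilde\xi_n^{(1)}},\bs{\widetilde\xi_n^{(2)}}\big)$, each $\bs{\widetilde\psi_m^{(\ell)}}$ with $m\leq n$ is a deterministic function of $\bs{\psi_0^{(1)}},\bs{\psi_0^{(2)}},U_1,\dots,U_n$, so $\mathcal{G}_n\subseteq\sigma(\bs{\psi_0^{(1)}},\bs{\psi_0^{(2)}},U_1,\dots,U_n)$ and in particular $U_{n+1}$ is independent of $\mathcal{G}_n$ while $\bs{\widetilde\psi_n^{(\ell)}}$ is $\mathcal{G}_n$-measurable. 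Freezing $\bs{\widetilde\psi_n^{(\ell)}}$ and integrating over $U_{n+1}$, the first identity in \eqref{eq:lemcoupl_def} yields $P(\bs{\widetilde\xi_{n+1}^{(\ell)}}=\bs{e_i}\mid\mathcal{G}_n)=\widetilde\psi_{n\,i}^{(\ell)}$, and since $\bs{\widetilde\psi_n^{(\ell)}}$ is also $\sigma(\bs{\widetilde\psi_n^{(\ell)}})\subseteq\mathcal{G}_n$-measurable, the tower property identifies this with $P(\bs{\widetilde\xi_{n+1}^{(\ell)}}=\bs{e_i}\mid\bs{\widetilde\psi_n^{(\ell)}})$; this is precisely \eqref{eq:cond-prob-g-bis}. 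Together with $\bs{\widetilde\psi_0^{(\ell)}}=\bs{\psi_0^{(\ell)}}$ and the fact that \eqref{eq:evolution_psi:K_modified-bis} has the same functional form as \eqref{eq:evolution_psi:K_modified}, an induction on $n$ then shows that $\bs{\widetilde\psi^{(\ell)}}$ has the same law as $\bs{\psi^{(\ell)}}$.

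For the one-step estimate \eqref{dis-1} I would subtract the two recursions, obtaining
\[
\bs{\widetilde\psi_{n+1}^{(2)}}-\bs{\widetilde\psi_{n+1}^{(1)}}=a_0\big(\bs{\widetilde\psi_n^{(2)}}-\bs{\widetilde\psi_n^{(1)}}\big)+a_1\big(\bs{\widetilde\xi_{n+1}^{(2)}}-\bs{\widetilde\xi_{n+1}^{(1)}}\big)+\big(\bs{l_{n+1}^{(2)}}-\bs{l_{n+1}^{(1)}}\big),
\]
pass to norms (legitimate via the triangle inequality since $a_0,a_1\geq 0$), and take conditional expectation given $\mathcal{H}_n=\sigma\big(\bs{\widetilde\psi_m^{(1)}},\bs{\widetilde\psi_m^{(2)}}:m\leq n\big)$, which is again contained in $\sigma(\bs{\psi_0^{(1)}},\bs{\psi_0^{(2)}},U_1,\dots,U_n)$ and hence independent of $U_{n+1}$. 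The $a_0$-term is $\mathcal{H}_n$-measurable; the $a_1$-term, after freezing $\bs{\widetilde\psi_n^{(1)}},\bs{\widetilde\psi_n^{(2)}}$ and integrating $U_{n+1}$, is bounded by \eqref{eq:lemcoupl_coupl2} evaluated at $(\bs{x},\bs{y})=(\bs{\widetilde\psi_n^{(1)}},\bs{\widetilde\psi_n^{(2)}})$, i.e.~by $|\bs{\widetilde\psi_n^{(2)}}-\bs{\widetilde\psi_n^{(1)}}|$; and $E\big[|\bs{l_{n+1}^{(\ell)}}|\mid\mathcal{H}_n\big]=O(c_{n+1}^{(\ell)})$ by the uniform hypothesis $|\bs{l_{n+1}^{(\ell)}}|=O(c_{n+1}^{(\ell)})$. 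Summing the three contributions gives exactly \eqref{dis-1}.

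Once the construction is set up, the argument is essentially mechanical; the two points requiring care are (i) the bookkeeping that keeps $\bs{\widetilde\psi^{(\ell)}}$ inside $S^*$ (the collapsing-intervals observation above), and (ii) tracking the conditioning $\sigma$-fields so that the fresh uniform $U_{n+1}$ is independent both of $\mathcal{G}_n$ (for the distributional identity \eqref{eq:cond-prob-g-bis}) and of $\mathcal{H}_n$ (for the contraction \eqref{dis-1}) --- both holding because every quantity built through time $n$ is a function of $\bs{\psi_0^{(1)}},\bs{\psi_0^{(2)}},U_1,\dots,U_n$ alone.
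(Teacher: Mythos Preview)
Your proposal is correct and follows essentially the same construction as the paper: build both chains on one probability space using a common i.i.d.\ uniform sequence $(U_n)$ and the maps $\bs{h^{(1)}},\bs{h^{(2)}}$ from Lemma~\ref{lem:Py_coupling}, then read off \eqref{eq:cond-prob-g-bis} from \eqref{eq:lemcoupl_def} and \eqref{dis-1} from \eqref{eq:lemcoupl_coupl2} after subtracting the two recursions. Your write-up is in fact slightly more careful than the paper's in two respects---you explicitly argue that the recursion stays in $S^*$, and you distinguish the $\sigma$-fields $\mathcal{G}_n$ and $\mathcal{H}_n$ needed for the two conclusions---but the underlying argument is identical.
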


\begin{remark} \rm 
As a consequence, for each $\ell=1,\,2$, the two stochastic processes
$\bs{\widetilde{\psi}^{(\ell)}}$ and $\bs{\widetilde{\xi}^{(\ell)}}$
have the same joint distribution of $\bs{\psi^{(\ell)}}$ and of
$\bs{\xi^{(\ell)}}$, respectively. Indeed,
$\bs{\widetilde{\psi}^{(\ell)}_0}=\bs{{\psi}^{(\ell)}_0}$ and, by
\eqref{eq:evolution_psi:K_modified}, \eqref{eq:cond-prob-g},
\eqref{eq:evolution_psi:K_modified-bis} and
\eqref{eq:cond-prob-g-bis}, the conditional distributions of
$\bs{\widetilde{\psi}^{\ell}_{n+1}}$ given
$[\bs{\widetilde{\psi}^{(\ell)}_0},\dots,\bs{\widetilde{\psi}^{(\ell)}_n}]$
and of $\bs{\widetilde{\xi}^{\ell}_{n+1}}$ given
$[\bs{\widetilde{\psi}^{(\ell)}_0},\bs{\widetilde{\xi}^{(\ell)}_1}\dots,
    \bs{\widetilde{\xi}^{(\ell)}_n}]$ are the same as the one of
  $\bs{{\psi}^{\ell}_{n+1}}$ given
  $[\bs{{\psi}^{(\ell)}_0},\dots,\bs{{\psi}^{(\ell)}_n}]$ and of
  $\bs{{\xi}^{\ell}_{n+1}}$ given
  $[\bs{{\psi}^{(\ell)}_0}, \bs{{\xi}^{(\ell)}_1}\dots,
      \bs{{\xi}^{(\ell)}_n}]$, respectively.\\ \indent Moreover, from
    inequality \eqref{dis-1}, by recursion, we obtain
\begin{equation}\label{dis-2}
\begin{split} 
E\Big[ | 
\bs{\widetilde{\psi}_{n+1}^{(2)}} - \bs{\widetilde{\psi}_{n+1}^{(1)}} 
|
\Big| 
\bs{\psi_{0}^{(1)}} , \bs{\psi_{0}^{(2)}} 
\Big]
& \leq 
(a_0+a_1)^{n+1} 
| 
\bs{\psi_{0}^{(2)}} - \bs{\psi_{0}^{(1)}} 
| 
\\
& + O\Big(\sum_{j=1}^{n+1} (a_0+a_1)^{n+1-j} (c_{j}^{(1)} + c_{j}^{(2)} )\Big). 
\end{split}
\end{equation} 
\end{remark}

\begin{proof}
We set $\bs{\widetilde{\psi}_{0}^{(\ell)}}= \bs{{\psi}_{0}^{(\ell)}}$, for
$\ell=1,\,2$, and we take a sequence $(U_n)_{n\geq 1}$ of
i.i.d.\ $(0,1)$-uniform random variables, independent of
$\sigma(\bs{{\psi}_{0}^{(1)}},\bs{{\psi}_{0}^{(2)}} )$.
Then, we take the two functions $\bs{h^{(1)}},\bs{h^{(2)}}$ of
Lemma~\ref{lem:Py_coupling} and, for each $\ell$ and any $n\geq 0$, we
recursively define
\begin{align*}
\bs{\widetilde{\xi}_{n+1}^{(\ell)}} & = 
\bs{h^{(\ell)}} 
(\bs{\widetilde{\psi}_{n}^{(1)}} ,\bs{\widetilde{\psi}_{n}^{(2)}} , U_{n+1})
\\
\bs{\widetilde{\psi}_{n+1}^{(\ell)}} & = a_0 \bs{\widetilde{\psi}_{n}^{(\ell)}} 
+ a_1\bs{\widetilde{\xi}_{n+1}^{(\ell)}} 
+
\bs{l_{n+1}^{(\ell)}}
(\bs{\widetilde{\psi}_{n}^{(\ell)}}, \bs{\widetilde{\xi}_{n+1}^{(\ell)}} )
+ 
\bs{c}.
\end{align*}
Setting $\widetilde{\mathcal{F}}_n =
\sigma(\bs{{\psi}_{0}^{(1)}},\bs{{\psi}_{0}^{(2)}}, U_1,
\ldots, U_n)$, we have that $U_{n+1}$ is independent of
$\widetilde{\mathcal{F}}_n$ and, by definition,
$\bs{\widetilde{\xi}_{n}^{(\ell)}}$ and
$\bs{\widetilde{\psi}_{n}^{(\ell)}}$ are
$\widetilde{\mathcal{F}}_n$-measurable, for any $\ell=1,2$ and $n\geq 0$.
Therefore, using relation \eqref{eq:lemcoupl_def}, we get for any
$\ell=1,\,2$, $n\geq 0$ and $i=1,\ldots,k$, 
\begin{equation*}
P\big(
\bs{\widetilde{\xi}_{n+1}^{(\ell)}} 
= \bs{e_i} 
\big| \widetilde{\mathcal{F}}_n \big) = \int
\ind{\{\bs{h^{(\ell)}}
(\bs{\widetilde{\psi}_{n}^{(1)}},\,\bs{\widetilde{\psi}_{n}^{(2)}},\, u) 
= \bs{e_i}\}} \, du
= \widetilde{\psi}_{n\, i}^{(\ell)}.
\end{equation*}
This means that \eqref{eq:evolution_psi:K_modified-bis}, together with
\eqref{eq:cond-prob-g-bis}, holds true.  Finally, by relation
\eqref{eq:lemcoupl_coupl2}, we have
\begin{equation*}
\begin{split}
E\Big[ | 
\bs{{\xi}_{n+1}^{(2)}} - \bs{{\xi}_{n+1}^{(1)}} 
|
\Big| 
\widetilde{\mathcal{F}}_n
\Big]
&=
\int_{(0,1)} 
|\bs{h^{(1)}} (\bs{\widetilde{\psi}^{(1)}_n} ,\bs{\widetilde{\psi}^{(2)}_n} ,\, u)
- 
\bs{h^{(2)}} (\bs{\widetilde{\psi}^{(1)}_n} ,\bs{\widetilde{\psi}^{(2)}_n},\, u) | 
du 
\\
&\leq |\bs{\widetilde{\psi}^{(1)}_n} - \bs{\widetilde{\psi}^{(2)}_n}|
\end{split}
\end{equation*} 
and hence, subtracting \eqref{eq:evolution_psi:K_modified-bis}
with $\ell=2$ from the same relation with $\ell=1$, we obtain
\begin{equation*}
E\Big[ | 
\bs{\widetilde{\psi}_{n+1}^{(2)}} - \bs{\widetilde{\psi}_{n+1}^{(1)}} 
|
\Big| 
\widetilde{\mathcal{F}}_n
\Big]
\leq 
a_0
| 
\bs{\widetilde{\psi}_{n}^{(2)}} - \bs{\widetilde{\psi}_{n}^{(1)}} 
| +
a_1
| 
\bs{\widetilde{\psi}_{n}^{(2)}} - \bs{\widetilde{\psi}_{n}^{(1)}} 
|
+ O(c_{n+1}^{(1)}) + O(c_{n+1}^{(2)}),
\end{equation*} 
and so inequality \eqref{dis-1} holds true.
\end{proof}


\noindent{\bf Acknowledgments}\\

\noindent Giacomo Aletti is a member of the Italian Group ``Gruppo
Nazionale per il Calcolo Scientifico'' of the Italian Institute
``Istituto Nazionale di Alta Matematica'' and Irene Crimaldi is a
member of the Italian Group ``Gruppo Nazionale per l'Analisi
Matematica, la Pro\-ba\-bi\-li\-t\`a e le loro Applicazioni'' of the
Italian Institute ``Istituto Nazionale di Alta Ma\-te\-ma\-ti\-ca''.
\\

\noindent{\bf Funding Sources}\\

\noindent Irene Crimaldi is partially supported by the Italian
``Programma di Attivit\`a Integrata'' (PAI), project ``TOol for
Fighting FakEs'' (TOFFE) funded by IMT School for Advanced Studies
Lucca.  \\



\end{document}